\newtheorem{thm}{Theorem}[section]
\newtheorem{lem}[thm]{Lemma}
\newtheorem{prop}[thm]{Proposition}
\newtheorem{coro}[thm]{Corollary}
\newtheorem{Def}[thm]{Definition}
\newtheorem{rem}[thm]{Remark}
\tikzset{courbe/.append style={scale=0.7}}
\def\nn{\nonumber}
\newcounter{cst}
\newcommand{\ctel}[1]{C_{\refstepcounter{cst}\label{#1}\thecst}}
\newcommand{\cter}[1]{C_{\ref*{#1}}}
\def\R{\mathbb{R}}
\def\be{\begin{equation}}
\def\ee{\end{equation}}
\def\g{{\bf g}}
\def\n{{\boldsymbol n}}
\def\p{\partial}
\def\grad{\boldsymbol{\nabla}}
\def\div{\grad\cdot}
\def\O{\Omega}
\def\sig{\sigma}
\def\k{\kappa}
\def\a{\alpha}
\def\x{{\boldsymbol x}}
\def\d{{\rm d}}
\def\ov#1{\overline{#1}}
\def\bF{{\boldsymbol  F}}
\def\bV{{\boldsymbol  V}}
\def\bW{{\boldsymbol  W}}
\def\bc{{\boldsymbol{c}}}
\def\bee{{\boldsymbol{e}}}
\def\bg{{\boldsymbol{g}}}
\def\bmu{{\boldsymbol{\mu}}}
\def\bPhi{{\boldsymbol{\Phi}}}
\def\0{{\bf 0}}
\def\1{{\bf 1}}
\def\Dd{\mathcal{D}}
\def\Ee{\mathcal{E}}
\def\Hh{\mathcal{H}}
\def\Kk{\mathcal{K}}
\def\Tt{\mathcal{T}}
\def\Xx{\mathcal{X}}
\def\Zz{\mathcal{Z}}
\def\Ddd{\mathfrak{D}}
\def\Eee{\mathfrak{E}}
\def\dt{{\Delta t}}
\begin{document}

\title[Finite Volume approximation of a degenerate Cahn-Hilliard model]
{Finite Volume approximation of a two-phase two fluxes degenerate Cahn-Hilliard model}
\author{Cl\'ement Canc\`es}
\address{Cl\'ement Canc\`es (\href{mailto:clement.cances@inria.fr}{\tt clement.cances@inria.fr}): Inria, Univ. Lille, CNRS, UMR 8524 - Laboratoire Paul Painlev\'e, F-59000 Lille 
}
\author{Flore Nabet}
\address{Flore Nabet  (\href{mailto:flore.nabet@polytechnique.edu}{\tt flore.nabet@polytechnique.edu}): CMAP, \'Ecole polytechnique, CNRS, I.P. Paris, 91128 Palaiseau, France}
\maketitle

\begin{abstract}
We study a time implicit Finite Volume scheme for degenerate Cahn-Hilliard model proposed in [W. E and P. Palffy-Muhoray. {\em Phys. Rev. E}, 55:R3844–R3846, 1997] and studied mathematically by the authors in [C. Canc\`es, D. Matthes, and F. Nabet. {\em Arch. Ration. Mech. Anal.}, 233(2):837-866, 2019]. The scheme is shown to preserve the key properties of the continuous model, namely mass conservation, positivity of the concentrations, the decay of the energy and the control of the entropy dissipation rate. This allows to establish the existence 
of a solution to the nonlinear algebraic system corresponding to the scheme. Further, we show thanks to compactness arguments that the approximate solution converges towards a weak solution of the continuous problems as the discretization parameters tend to $0$. 
Numerical results illustrate the behavior of the numerical model. 
\end{abstract}

\vspace{10pt}

{\small {\bf Keywords.}
two-phase flow, degenerate Cahn-Hilliard system, finite volumes, convergence
\vspace{5pt}

{\bf AMS subjects classification. } 65M12, 65M08,  76T99, 35K52, 35K65

}

\section{The two-phase two fluxes degenerate Cahn-Hilliard model}

The goal of this paper is to propose a convergent finite volume discretization for 
a degenerate Cahn-Hilliard model proposed by E and Palffy-Muhoray in~\cite{EPM97} 
and studied in \cite{CMN19} by the authors.
Before considering the numerical scheme, let us describe and discuss
the continuous model.  

\subsection{The continuous model}

We consider a mixture made of two incompressible phases evolving in 
a bounded and connected polygonal open subset $\O$ of $\R^2$ and on a time interval $[0,T]$, where $T$ is an 
arbitrary finite time horizon. The composition of the fluid is described by the volume fractions $\bc  =(c_1,c_2)$ of 
the two phases. Since the whole volume $\O$ is occupied by the two phases, the following constraint on the $c_i$ 
holds
\be\label{eq:cont_c1+c2}
c_1 + c_2 = 1 \quad \text{in}\; (0,T) \times \O. 
\ee
The evolution of the volume fractions is prescribed by the following partial differential equations
\be\label{eq:cont_ci}
\p_t c_i - \div\left(\frac{c_i}{\eta_i} \grad \left(\mu_i + \Psi_i\right) \right) = {\theta_i} \Delta c_i \quad \text{in}\; Q_T:=(0,T) \times \O.
\ee
In the above equation, $\eta_i>0$ denotes the viscosity of the phase $i$, $\mu_i$ is its chemical potential (which is 
one of the unknown of the problem), while $\Psi_i \in H^1(\O)$ is a given external potential acting on phase $i$ that 
is assumed be independent on time for simplicity. For $\Psi_i$, one can typically think about gravity, 
that is $\Psi_i(\x) = -\varrho_i \g \cdot \x$ with $\varrho_i$ the density of phase $i$ and $\g$ the gravitational vector. 
The coefficient $\theta_i \geq 0$ is a given parameter quantifying the thermal agitation of phase $i$. The limit case 
$\theta_i = 0$ is called the deep-quench limit in the Cahn-Hilliard literature. 
The difference of the phase chemical potentials is given by the following expression
\be\label{eq:cont_dmu}
\mu_1 - \mu_2= -\alpha \Delta c_1 + \kappa (1-2c_1) \quad\text{in}\; Q_T,
\ee
where $\alpha>0$ and $\kappa>0$ are given coefficients governing the characteristic size of the transition layers
between patches of pure phases $\{c_1=0\}$ and $\{c_1=1\}$. Typically, $\alpha$ is assumed to be small in comparison to $\kappa$. 
Equation~\eqref{eq:cont_dmu} is complemented by homogeneous Neumann boundary conditions
\be\label{eq:cont_neumann}
\grad c_i \cdot \n = 0 \quad\text{on}\; (0,T) \times \p\O, 
\ee
whereas~\eqref{eq:cont_ci} is complemented by no-flux boundary conditions
\be\label{eq:cont_no-flux}
\frac{c_i}{\eta_i} \grad \left(\mu_i + \Psi_i\right)\cdot \n = 0 \quad\text{on}\; (0,T) \times \p\O.
\ee
Up to now, the chemical potentials are defined up to a common constant. This degree of freedom is 
fixed by imposing a zero mean condition on the mean chemical potential $\ov \mu$, i.e., 
\be\label{eq:cont_ovmu}
\int_\O \ov \mu(t,\x) \d\x = 0, \quad \forall t \geq 0, \quad \text{where}\; \ov \mu = c_1 \mu_1 + c_2 \mu_2. 
\ee
Finally to close the system, we impose an initial condition $\bc^0 =(c_1^0, c_2^0)$ on the volume fractions by setting
\be\label{eq:cont_init}
{c_i}_{|_{t=0}} = c_i^0 \quad\text{in}\; \O. 
\ee
The initial profiles $c_i^0 \in H^1(\O)$ are assumed to be nonnegative with $c_1^0 + c_2^0=1$ in $\O$, and we assume that 
both phases are present at initial time, i.e., 
\be\label{eq:cont_mass_pos}
\int_\O c_i^0 \d\x >0, \qquad i \in \{1,2\}.
\ee

\subsection{Fundamental estimates and weak solutions}\label{ssec:cont_estimates}

As a preliminary to the study of the numerical scheme, we derive formally at the continuous level some a priori estimates. 
Their transposition at the discrete level will be key in the numerical analysis to be proposed in what follows. 
Equation~\eqref{eq:cont_ci} can be rewritten under the form 
\[
\p_t c_i + \div \bF_i = 0, \quad \text{with}\quad \bF_i 
= - \frac{c_i}{\eta_i} \grad \left( \mu_i + \Psi_i + \eta_i \theta_i  \log(c_i)\right).
\]
In view of the boundary conditions~\eqref{eq:cont_neumann}--\eqref{eq:cont_no-flux}, this ensures that the volume 
occupied by each phase is preserved along time, namely
\[
\int_\O c_i(t,\x) \d\x = \int_\O c_i^0(\x) \d\x, \qquad \text{for all}\; t \geq0.
\]
Moreover, it can be shown that $c_i \geq 0$ in $(0,T) \times \O$. Thanks to the constraint~\eqref{eq:cont_c1+c2}, 
this directly provides that 
\[
0 \leq c_i \leq 1 \quad \text{in}\; (0,T) \times \O.
\]

Multiplying~\eqref{eq:cont_ci} by  $\mu_i + \Psi_i + \eta_i \theta_i \log(c_i)$, integrating over $\O$ and summing over $i$ 
yields 
\[
\sum_{i\in\{1,2\}} \int_\O \p_t c_i \left( \mu_i + \Psi_i + \eta_i \theta_i \log(c_i) \right)\d\x + \Ddd(\bc,\bmu) = 0, 
\]
where the energy dissipation $\Ddd(\bc,\bmu)$ is given by 
\[
\Ddd(\bc,\bmu) = \sum_{i\in\{1,2\}} \int_\O \frac{c_i}{\eta_i} \left| \grad \left( \mu_i + \Psi_i + \eta_i \theta_i   \log(c_i) \right) \right|^2 \d\x \geq 0. 
\]
As a consequence of~\eqref{eq:cont_c1+c2}, $\p_t c_2  = - \p_t c_1$, so that the first term in the previous inequality 
can be rewritten as 
\begin{multline*}
\sum_{i\in\{1,2\}} \int_\O \p_t c_i 
\left( \mu_i + \Psi_i + \eta_i \theta_i  \log(c_i) \right) \\
= \int_\O \p_t  c_1 (\mu_1 - \mu_2)\d\x + \sum_{i\in\{1,2\}} \int_\O \p_t c_i \left( \Psi_i +  \eta_i \theta_i  \log(c_i) \right) \d\x. 
\end{multline*}
The second term in the right-hand side can be rewritten as 
\[
\int_\O \p_t c_i \left( \Psi_i +  \eta_i \theta_i   \log(c_i) \right) \d\x 
= \frac{\d}{\d t} \int_\O \sum_{i\in\{1,2\}} 
\left[c_i \Psi_i + \eta_i \theta_i H(c_i) \right]\d\x
\]
with $H(c) = c \log(c) - c + 1 \geq 0$, 
while we can make use of~\eqref{eq:cont_dmu} to rewrite the first term as
\[
\int_\O \p_t  c_1 (\mu_1 - \mu_2)\d\x = \frac{\d}{\d t} \int_\O \left(\frac{\a}2 |\grad c_1|^2 + \k c_1 (1-c_1)\right) \d\x. 
\] 
Therefore, we obtain the energy / energy dissipation relation 
\be\label{eq:cont_EDE}
\frac{\d}{\d t} \Eee(\bc) + \Ddd(\bc,\bmu) = 0 \quad \text{for all}\; t \geq 0, 
\ee
where the energy functional $\Eee(\bc)$ is defined by 
\be\label{eq:cont_NRJ}
\Eee(\bc) = \int_\O  \left(\frac{\a}2 |\grad c_1|^2 + \k c_1 (1-c_1) + \sum_{i\in\{1,2\}} \left[c_i \Psi_i + 
\eta_i \theta_i H(c_i) \right]\right)\d\x.
\ee
A straightforward consequence of~\eqref{eq:cont_EDE} is that $t \mapsto \Eee(\bc(t))$ is non-increasing along time, and 
thus that 
\be\label{eq:cont_EDE2}
\Eee(\bc(t)) + \int_0^t \Ddd(\bc(\tau), \bmu(\tau)) \d\tau = \Eee(\bc^0) < \infty \quad \text{for all}\; t \geq 0.
\ee
We deduce from previous inequality that the energy is bounded, hence a $L^\infty((0,T);H^1(\O))$ estimate on $c_i$.

\begin{rem}\label{rmk:GF}
It is even explained in~\cite{OE97} and rigorously shown in~\cite{CMN19} that the model~\eqref{eq:cont_c1+c2}--\eqref{eq:cont_init} 
can be interpreted as the generalized gradient flow~\cite{AGS08} of the energy for some geometry related to constrained optimal transportation~\cite{BBG04}. 
This means that the whole dynamics aims at making the energy decrease as fast as possible in this geometry. 
The classical degenerate Cahn-Hillard equation~\cite{EG96} also has a generalized gradient flow structure \cite{LMS12}, but for 
a more restrictive geometry~\cite{DNS09}, leading to a smaller dissipation rate when compared to our model~\cite{CMN19}.
\end{rem}

The energy / energy dissipation estimate~\eqref{eq:cont_EDE} is not sufficient to carry out our mathematical study since it only 
provides a weighted estimate on the chemical potentials
\be\label{eq:cont_mui_weight}
\sum_{i\in\{1,2\}} \iint_{Q_T} c_i |\grad \mu_i|^2 \d\x\d t \leq C. 
\ee
In order to bypass this difficulty, one needs to quantify the production of mixing entropy. Let us multiply \eqref{eq:cont_ci} by 
$\eta_i \log(c_i)$, integrate over $Q_T$ and sum over $i\in\{1,2\}$, which using~\eqref{eq:cont_c1+c2} leads to 
\begin{multline*}
\sum_{i\in\{1,2\}} \int_\O \eta_i (H(c_i(T,\cdot)) - H(c_i^0)) \d\x +\sum_{i\in\{1,2\}} \iint_{Q_T} \grad c_i \cdot \grad \Psi_i \d\x \d t \\
+ \sum_{i\in\{1,2\}} 4 \theta_i \eta_i \iint_{Q_T} \left|\grad \sqrt{c_i}\right|^2 \d\x\d t
+ \iint_{Q_T} \grad c_1 \cdot \grad (\mu_1 - \mu_2) \d\x \d t = 0.
\end{multline*}
The first two terms can be bounded thanks to the $L^\infty(Q_T)$ and $L^\infty((0,T);H^1(\O))$ estimates on $c_i$.
For the last term of the left-hand side, one makes use of~\eqref{eq:cont_dmu} and~\eqref{eq:cont_neumann} to rewrite it as
\begin{multline*}
\iint_{Q_T} \grad c_1 \cdot \grad (\mu_1 - \mu_2) \d\x \d t = \iint_{Q_T} (-\Delta c_1) (-\alpha \Delta c_1 + \k (1-2c_1)) \d\x\d t\\
\geq \frac{\a}2 \iint_{Q_T} |\Delta c_1|^2 \d\x \d t - \frac{\k}{2\a} \iint_{Q_T} (1-2c_1)^2 \d\x\d t.
\end{multline*}
The $L^\infty(Q_T)$ estimate on $c_1$ shows that the last term of the right-hand side is bounded. At the end of the day, one 
gets 
\be\label{eq:cont_FI}
 \frac{\a}2 \iint_{Q_T} |\Delta c_1|^2 \d\x \d t + \sum_{i\in\{1,2\}} 4 \theta_i \eta_i \iint_{Q_T} \left|\grad \sqrt{c_i}\right|^2 \d\x\d t \leq C. 
\ee
Combining this estimate with relation~\eqref{eq:cont_dmu}, we obtain a $L^2(Q_T)$ estimate on $\mu_1 - \mu_2$. 

The last step aims at obtaining an $L^2(Q_T)$ bound on each $\mu_i$ independently.
The definition~\eqref{eq:cont_ovmu} of $\ov \mu$ yields 
\[
\grad  \ov \mu = (\mu_1 - \mu_2) \grad c_1 + \sum_{i\in\{1,2\}} c_i \grad \mu_i.
\]
The first term is in $L^2((0,T);L^1(\O))$ as the product of an element of $L^2(Q_T)$ with an element of $L^\infty((0,T);L^2(\O))$,
while the second term is in $L^2(Q_T)$ since $0 \leq c_i \leq 1$ and thanks to~\eqref{eq:cont_mui_weight}.
As a consequence, $\grad \ov \mu$ is bounded in $L^2((0,T);L^1(\O))$. Making use of the Poincar\'e-Sobolev estimate (recall that $\ov \mu$ 
has zero mean for all time, cf.~\eqref{eq:cont_ovmu}, and that $\O\subset \R^2$), we obtain that $\ov \mu$ is bounded in $L^2(Q_T)$. 
To get the desired $L^2(Q_T)$ estimate on $\mu_1$, it only remains to check that 
\[
\mu_1 = (c_1+c_2)\mu_1 = \ov \mu - c_2 (\mu_1 - \mu_2)
\]
belongs to $L^2(Q_T)$ thanks to the $L^2(Q_T)$ estimates on $\ov \mu$ and $\mu_1 - \mu_2$ together with $0\leq c_2 \leq 1$.

The interest of the above formal calculations is twofold. First, our scheme has been designed so that 
all these calculations can be transposed to the discrete setting. The corresponding a priori estimates will be 
at the basis of the numerical analysis proposed in this paper. Second, these estimates provide enough regularity on the 
solution to give a proper notion of weak solution to the problem. 
\begin{Def}\label{Def:weak}
$(\bc,\bmu)$ is said to be a {\em weak solution} to the problem~\eqref{eq:cont_c1+c2}--\eqref{eq:cont_init} if
\begin{itemize}
\item $c_i \in L^\infty(Q_T) \cap L^\infty((0,T);H^1(\O))$ with $c_i\geq 0$ and $c_1+c_2 = 1$ a.e. in $Q_T$;
\item $\mu_i\in L^2(Q_T)$ with $c_i \grad \mu_i \in L^2(Q_T)$  and $\int_\O \ov \mu(t,\x) \d\x = 0$ for a.e. $t \in (0,T)$; 
\item For all $\varphi \in C^\infty_c([0,T)\times\ov\O)$, there holds
\be\label{eq:weak_ci}
\iint_{Q_T} c_i \p_t \varphi \d\x \d t + \int_\O c_i^0 \varphi(0,\cdot)\d\x - \iint_{Q_T} \left(\frac{c_i}{\eta_i} \grad (\mu_i+\Psi_i )
+ \theta_i \grad c_i \right)\cdot \grad \varphi \d\x\d t =0,
\ee
as well as 
\be\label{eq:weak_dmu}
\iint_{Q_T} (\mu_1 - \mu_2) \varphi \d\x\d t = \iint_{Q_T} \left[\a \grad c_1 \cdot \grad \varphi + \k (1-2c_1) \varphi \right]\d\x\d t.
\ee
\end{itemize}
\end{Def}

The existence of a weak solution has been established in \cite{CMN19} by showing the convergence of a minimizing movement 
scheme {\it \`a la} Jordan, Kinderlehrer and Otto~\cite{JKO98}. Note that in \cite{CMN19}, the case of a convex three-dimensional 
domain $\O$ is also addressed, but it relies on the fact that the $L^2(Q_T)$ estimate on $\Delta c_1$ yields a $L^2((0,T);H^2(\O))$ 
estimate on $c_i$ for which we dont have an equivalent at the discrete level. 
This is why we restrict our attention on the case $\O \subset \R^2$ (but not necessarily convex) in this paper. 
Let us also mention the recent contribution~\cite{CM_HAL} where the convergence of a minimizing movement scheme 
is addressed for a closely related model where the Cahn-Hilliard energy is replaced by the 
Flory-Higgins energy.

\section{Finite Volume approximation and main results}

Prior to presenting the scheme and stating our main results, that are the existence of a discrete solution to the scheme 
and the convergence of the corresponding approximate solutions towards a weak solution to the problem~\eqref{eq:cont_c1+c2}--\eqref{eq:cont_init}, 
we introduce some notations and requirements concerning the mesh.

\subsection{(Super)-admissible mesh of $\O$ and time discretization}

Let us first give a definition of what we call an admissible mesh. 
\begin{Def}
\label{def:mesh}
An \emph{admissible mesh of $\O$} is a triplet $\left(\Tt, \Ee, {(\x_K)}_{K\in\Tt}\right)$ such that the following conditions are fulfilled. 
\begin{enumerate}[(i)]
\item\label{it:cell} Each control volume (or cell) $K\in\Tt$ is non-empty, open, polygonal and convex. We assume that 
\[
K \cap L = \emptyset \quad \text{if}\; K, L \in \Tt \; \text{with}\; K \neq L, 
\qquad \text{while}\quad \bigcup_{K\in\Tt}\ov K = \ov \O. 
\]
We denote by $m_K$ the 2-dimensional Lebesgue measure of $K$.
\item\label{it:edge} Each edge $\sig \in \Ee$ is closed and is contained in a hyperplane of $\R^2$, with positive 
$1$-dimensional Hausdorff (or Lebesgue) measure denoted by $m_\sig = \Hh^{1}(\sig) >0$.
We assume that $\Hh^{1}(\sig \cap \sig') = 0$ for $\sig, \sig' \in \Ee$ unless $\sig' = \sig$.
For all $K \in \Tt$, we assume that there exists a subset $\Ee_K$ of $\Ee$ such that $\p K =  \bigcup_{\sig \in \Ee_K} \sig$. 
Moreover, we suppose that $\bigcup_{K\in\Tt} \Ee_K = \Ee$.
Given two distinct control volumes $K,L\in\Tt$, the intersection $\ov K \cap \ov L$ either reduces to a single edge
$\sig  \in \Ee$ denoted by $K|L$, or its $1$-dimensional Hausdorff measure is $0$. 
\item\label{it:ortho} The cell-centers $(\x_K)_{K\in\Tt}$ are pairwise distinct with $\x_K \in K$, and are such that, if $K, L \in \Tt$ 
share an edge $K|L$, then the vector $\n_{KL} = \frac{\x_L-\x_K}{|\x_K-\x_L|}$ is orthogonal to $K|L$.
\item\label{it:superadm} Given two cells $K,L\in\Tt$ sharing an edge $\sig = K|L$, we assume that the straight line joining $\x_K$ and $\x_L$ 
crosses the edge $\sig$ in its midpoint $\x_\sig$. 
\end{enumerate} 
\end{Def}

Let us introduce some additional notations, some of them being depicted on Fig.~\ref{fig:mesh}.
The size of the mesh $\Tt$ (which is intended to tend to $0$ in the convergence proof) is defined by
$h_\Tt = \max_{K\in\Tt} h_K$, with $h_K ={\rm diam}(K)$.
Given two neighboring cells $K,L\in\Tt$ sharing an edge $\sig=K|L$, we denote by 
$d_\sig = |\x_K-\x_L|$ whereas $d_{K\sig} = |\x_K-\x_\sig| \leq d_\sig$. The transmissivities $\tau_\sig$ and $\tau_{K\sig}$ of the edge $\sig$ 
are respectively defined by $\tau_\sig = \frac{m_\sig}{d_\sig}$ and $\tau_{K\sig} = \frac{m_\sig}{d_{K\sig}}$.
The diamond $D_\sig$ and half diamond $D_{K\sig}$ cells are 
defined as the convex hulls of $\{\x_K, \x_L, \sig\}$ and $\{\x_K, \sig\}$ respectively. Denoting by $m_{D_\sig}$ (resp. $m_{D_{K\sig}}$) the 
2-dimensional Lebesgue measure of $D_{\sig}$ (resp. $D_{K\sig}$), we will use many time the following elementary geometric properties: $d_\sig m_\sig = 2m_{D_\sig}$ 
and $d_{K\sig} m_\sig = 2m_{D_{K\sig}}$. We also denote by $\Ee_{K,\rm int}$ the subset of $\Ee_{K}$ made of the internal edges $\sig$ such that 
there exists $L\in\Tt$ such that $\sig = K|L$, and by $\Ee_{\rm int} = \bigcup_{K\in\Tt} \Ee_{K,\rm int}$.

\begin{figure}
\begin{tikzpicture}[scale = .8]
\def\decal{0.6}
\def\decalbis{0.3}
\def\ortho{0.2} 
\def\orthobis{\ortho/1.41} 

\draw[fill = green!20!white, line width = 1pt] (0,0)--(2,6)--(6,2)--(0,0);
\draw[fill = red!20!white, line width = 1pt] (2,6)--(6,2)--(8,6)--(2,6);

\draw[line width = 1.5pt, color = blue!50!white] (2,6)--(6,2);

\draw[dashed] (2.5,2.5)--(5,5);
\draw[dashed] (5,6)--(5,5);
\draw[dashed] (7,4)--(5,5);
\draw[dashed] (1,3)--(2.5,2.5);
\draw[dashed] (3,1)--(2.5,2.5);
\draw (5,5) node {$\color{red!80!black}\bullet$};
\draw (2.5,2.5) node {$\color{green!60!black}\bullet$};
\draw (4,4) node {$\color{blue!80!black}\bullet$};
\draw[ >=latex, <->, color = orange] (2.5 + \decalbis,2.5 - \decalbis)--(5 + \decalbis,5 - \decalbis);
\draw[ >=latex, <->, color = green!60!black](2.5 -\decal, 2.5 + \decal) --(4 - \decal,4+\decal);
\draw[ >=latex, <->, color = red!80!black] (5 - \decal, 5 + \decal) --(4 - \decal,4 + \decal);
\draw (2.5,2.4) node[left] {$\x_K$};
\draw (4,4) node[left] {$\x_\sig$};
\draw (5,5.1) node[right] {$\x_L$};
\draw (5.5,2.8) node {\rotatebox{-45}{$\color{blue!80!black} \sig = K|L$}};
\draw (3.1-\decal,3.6+\decal) node {$\color{green!40!black} d_{K\sig}$};
\draw (4.25 - \decal,4.75 + \decal) node {$\color{red!80!black} d_{L\sig}$};
\draw (3.8+\decalbis,3.3 - \decalbis) node {$\color{orange} d_{\sig}$};
\draw[dotted, color = black!70!white] (2.5 - \decal, 2.5+\decal) -- (2.5+\decalbis, 2.5 - \decalbis);
\draw[dotted, color = black!70!white] (5 - \decal, 5+\decal) -- (5+\decalbis, 5 - \decalbis);
\draw (4.6,3.4) node {$\color{blue!80!black} =$};
\draw (3,5) node {$\color{blue!80!black} =$};
\draw (4-\orthobis,4-\orthobis)--(4 , 4-\orthobis-\orthobis)-- (4+\orthobis,4-\orthobis);
\end{tikzpicture}
\hspace{1cm}
\begin{tikzpicture}[scale = .8]

\def\decalter{.5}

\draw[line width = 1pt] (0,0)--(2,6)--(6,2)--(0,0);
\draw[line width = 1pt] (2,6)--(6,2)--(8,6)--(2,6);
\draw[line width = 0pt, pattern = north east lines, pattern color = green!20!white] (2.5,2.5)--(2,6)--(6,2)--cycle;
\draw[line width = 0pt, pattern = north east lines, pattern color = red!20!white] (5,5)--(2,6)--(6,2)--cycle;
\draw[line width = .5pt, dashed, ] (2.5,2.5)--(2,6)--(5,5)--(6,2)--cycle;
\draw[line width = 1.5pt, color = blue!50!white] (2,6)--(6,2);

\draw (5,5) node {$\color{red!80!black}\bullet$};
\draw (2.5,2.5) node {$\color{green!60!black}\bullet$};
\draw (2.5,2.4) node[left] {$\x_K$};
\draw (5,5.1) node[right] {$\x_L$};
\draw (3.15,5.15) node {\rotatebox{-45}{$\color{blue!80!black} \sig=K|L$}};

\draw (3.25,3) node {$\color{green!40!black} D_{K\sig}$};
\draw (4.5,4.5) node {$\color{red!80!black} D_{L\sig}$};
\draw[ >=latex, <->, color = blue!50!white] (2 - \decalter,6 - \decalter)--(6 - \decalter,2 - \decalter);
\draw[dotted, color = black!70!white] (2 - \decalter,6 - \decalter) -- (2,6);
\draw[dotted, color = black!70!white] (6 - \decalter,2 - \decalter) -- (6,2);
\draw (4.2,3.2) node {\rotatebox{-45}{$\color{blue!80!black} m_\sig$}};

\end{tikzpicture}
\caption{Illustration of an admissible mesh in the sense of Definition~\ref{def:mesh}. Each point $\x_K$ belongs to the cell $K$ for $K\in\Tt$. 
For any $\sig = K|L$, the segment $[\x_K, \x_L]$ intersects $\sig$ at its midpoint $\x_\sig$ in an orthogonal way. This properties hold 
for meshes made of triangles with acute angles if $\x_K$ is chosen as the center of the circumcircle of the triangle $K$. 
On the right figure, the dashed area is the diamond cell $D_\sig$ corresponding to the edge $\sig = K|L$. 
It is made of $D_{K,\sig} = D_{\sig} \cap K$ (in green), $D_{L\sig} = D_{\sig} \cap L$ (in red), and of the edge $\sig = K|L$ (in blue), the 
length of which is equal to $m_\sig$.}
\label{fig:mesh}
\end{figure}
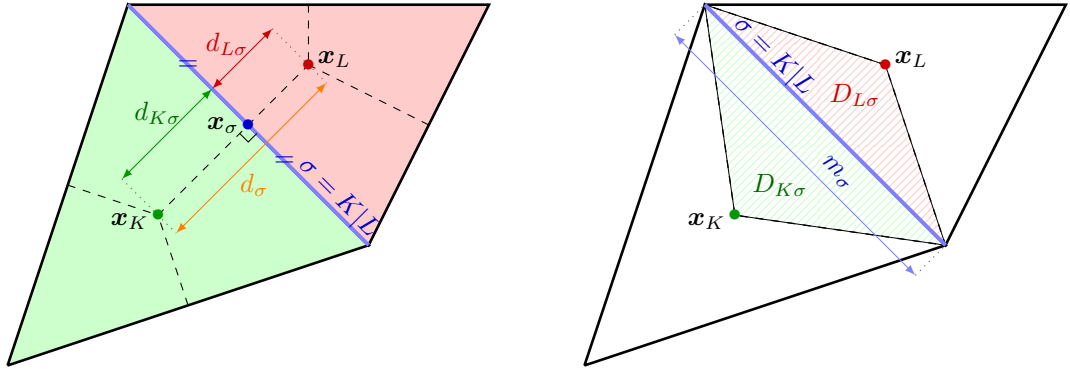

Even though this is absolutely not necessary, we choose to restrict our attention to the case of uniform time discretizations in 
the mathematical proofs in order to reduce the amount of notations. In what follows, 
we set $\dt = T/N$ and $t^n = n \dt$ for $n \in \{0,\dots, N\}$. The integer $N$ is intended to be large and even to tend to $+\infty$ 
in the convergence proof.

\begin{rem}\label{rem:superadm}
Condition \eqref{it:superadm} above enforces an additional restriction with respect to the classical definition of 
finite volumes meshes with orthogonality condition~\eqref{it:ortho}. Meshes satisfying this condition in addition to 
the more classical assumptions~\eqref{it:cell}--\eqref{it:ortho} is called \emph{super-admissible} following the 
terminology introduced in~\cite{EGH10}. It is for instance satisfied by cartesian grids or by acute triangulations. 
However, \eqref{it:superadm} is in general not satisfied by Vorono\"i meshes. This condition appears for technical reasons 
related to the construction of a strongly convergent SUSHI discrete gradient~\cite{EGH10}, see Proposition~\ref{prop:sushi} later on. 
On the other hand, this condition was recently pushed forward in~\cite{GKM_arXiv} to show the consistency of the discrete optimal transportation 
geometry~\cite{Maas11} hidden behind our work with the continuous optimal transportation geometry~\cite{Villani09} in which our 
system~\eqref{eq:cont_c1+c2}--\eqref{eq:cont_init} has a gradient flow structure~\cite{CMN19}. 
\end{rem}

\subsection{A two-point flux approximation finite volume scheme}
\label{sec:scheme}

The scheme we propose is a cell-centered scheme based on two-point flux approximation (TPFA) finite 
volumes. At each time step $n\in\{1,\dots, N\}$, then unknowns are located at the centers $\x_K$ of the 
cells $K\in\Tt$. Given discrete volume fractions $\bc^{n-1} = \left(c_{1,K}^{n-1},c_{2,K}^{n-1}\right)_{K\in\Tt}$ 
at time $t^{n-1}$, we look for actualized volume fractions $\bc^n = \left(c_{1,K}^{n},c_{2,K}^{n}\right)_{K\in\Tt}$ and 
chemical potentials $\bmu^n = \left(\mu_{1,K}^{n},\mu_{2,K}^{n}\right)_{K\in\Tt}$ at time $t^n$ that are 
expected to approximate the mean values on $K$ of their continuous counterparts $\bc(t^n)$ and $\bmu(t^n)$. 
At time $t=0$, we initialize the procedure by setting
\be\label{eq:ciK0}
c_{i,K}^0 = \frac1{m_K} \int_K c_i^0 \d\x, \qquad \forall K \in \Tt, \; i \in \{1,2\}.
\ee
As highlighted in the formal calculations presented in Section~\ref{ssec:cont_estimates}, the analysis requires 
the use of the logarithm of the volume fractions. To this end, the volume fractions $c_{i,K}^n$ have to be strictly positive 
for $n\geq 1$. To ensure this property, some thermal diffusion is needed. In the case where $\theta_i = 0$, then 
one needs to introduce a small amount of numerical diffusion by setting
\be\label{eq:theta_isig}
\theta_{i,\Tt} = \max(\theta_i, \rho h_\Tt) >0, \qquad i\in \{1,2\},
\ee
where $\rho>0$ is a parameter that can be fixed by the user. 
Equation~\eqref{eq:cont_ci} is then discretized into 
\be\label{eq:scheme_ci}
m_K \frac{c_{i,K}^n - c_{i,K}^{n-1}}{\dt} + \sum_{\substack{\sig \in \Ee_{K,\rm int}\\\sig=K|L}}\tau_\sig 
\left[\frac{c_{i,\sig}^n}{\eta_i} \left(\mu_{i,K}^n + \Psi_{i,K}- \mu_{i,L}^n - \Psi_{i,L}\right) + \theta_{i,\Tt} (c_{i,K}^n - c_{i,L}^n) \right] = 0
\ee
for all $K\in\Tt$ and $i\in\{1,2\}$. In the above relation, we used the following discretization of the external potential:
\[
\Psi_{i,K} = \frac{1}{m_K} \int_K \Psi_i(\x) \d\x, \qquad \forall K\in\Tt, \; i\in\{1,2\}.
\]
Edge values $c_{i,\sig}^n$ of the discrete volume fractions also appear in~\eqref{eq:scheme_ci}. Rather than using upstream 
values of the volume fractions as in our previous work~\cite{CN_FVCA8}, we make use of a logarithmic mean, i.e., 
\be\label{eq:c_isig}
c_{i,\sig}^n = 
\begin{dcases}
c_{i,K}^n & \text{ if } c_{i,K}^n = c_{i,L}^n \geq 0,\\[10pt]
0 & \text { if } \min(c_{i,K}^n, c_{i,L}^n) \leq 0, \\
\frac{c_{i,K}^n-c_{i,L}^n}{\log(c_{i,K}^n) - \log(c_{i,L}^n)} & \text{ otherwise}, 
\end{dcases}
\ee
for all $\sig =K|L \in \Ee_{\rm int}$ and $i \in \{1,2\}$. This particular choice of the edge value fits with the one suggested 
in~\cite{Maas11, Mie11} and used in a closely related context to ours in~\cite{MM16}. 
Equation \eqref{eq:cont_dmu} is discretized into
\be\label{eq:scheme_dmu}
\mu_{1,K}^n - \mu_{2,K}^n = \frac{\alpha}{m_K} \sum_{\substack{\sig \in \Ee_{K,\rm int}\\\sig=K|L}}\tau_\sig \left(c_{1,K}^n - c_{1,L}^n\right)
+ \kappa (1-2c_{1,K}^{n-1}), \qquad \forall K \in \Tt.
\ee
Note that the repulsive term (second in the right-hand side) is discretized in an explicit way for stability issues that will appear clearly later on. 
The constraint~\eqref{eq:cont_c1+c2} is discretized in a straightforward way by imposing
\be\label{eq:scheme_c1+c2}
c_{1,K}^n + c_{2,K}^n = 1, \qquad \forall K \in \Tt.
\ee
The last equation to be transposed in the discrete setting is~\eqref{eq:cont_ovmu}, which is translated into
\be\label{eq:ov-mu_K}
\sum_{K\in\Tt} m_K \ov \mu_{K}^n = 0, \qquad \text{where}\;\ov \mu_{K}^n = c_{1,K}^n \mu_{1,K}^n + c_{2,K}^n \mu_{2,K}^n.
\ee

\subsection{Main results}

Before addressing the convergence of the scheme, we focus first on the case of a fixed mesh and time discretization. 
The scheme~\eqref{eq:scheme_ci}--\eqref{eq:ov-mu_K} yields a nonlinear system on $(\bc^n, \bmu^n)$. The existence 
of a solution to this nonlinear system is far from being obvious. The existence of such a solution and some 
important properties of the discrete solution mimicking the properties highlighted in Section~\ref{ssec:cont_estimates} 
are gathered in the first theorem of this paper. 

\begin{thm}\label{main:1}
Assume that the inverse CFL condition~\eqref{eq:CFL_nondeg} is fulfilled, then 
there exists (at least) one solution ${(\bc^n,\bmu^n)}_{n\geq1}$ to the scheme~\eqref{eq:scheme_ci}--\eqref{eq:ov-mu_K}. 
Moreover, this solution satisfies the following properties:
\begin{enumerate}[(i)]
\item mass conservation:
\[
\sum_{K\in\Tt} m_K c_{i,K}^n = \int_{\O} c_{i}^0 \d\x, \qquad n \geq 0, \; i\in\{1,2\};
\]
\item positivity:
\[
0 < c_{i,K}^n <1, \quad K\in\Tt, \; i\in\{1,2\}, \; n \geq 1;
\]
\item energy decay:
\[
\frac{\Eee_\Tt(\bc^n) - \Eee_\Tt(\bc^{n-1})}{\dt} + \Ddd(\bc^n, \bmu^n) \leq 0, \qquad \forall n \geq 1, 
\]
where the discrete energy $\Eee(\bc^n)$ is defined by 
\begin{align}\label{eq:Eee_Tt}
\Eee_\Tt(\bc^n) =&\; \frac{\alpha}{{2}} \sum_{\substack{\sig \in \Ee_{\rm int}\\\sig=K|L}} \tau_\sig \left(c_{1,K}^n - c_{1,L}^n\right)^2 
\\&\;+ 
\sum_{K\in\Tt} m_K \left\{ \kappa c_{1,K}^n c_{2,K}^n + \sum_{i=\{1,2\}} \left( c_{i,K}^n \Psi_{i,K} + \theta_{i,\Tt} {\eta_i} H(c_{i,K}^n) \right) \nonumber
\right\},
\end{align}
and the discrete dissipation is defined by 
\begin{multline}\label{eq:Ddd_Tt}
\Ddd_\Tt(\bc^n, \bmu^n) = \sum_{i\in\{1,2\}} \sum_{\substack{\sig \in \Ee_{\rm int}\\\sig=K|L}} \tau_\sig \frac{c_{i,\sig}^n}{\eta_i}
\Bigg(\mu_{i,K}^n + \Psi_{i,K}- \mu_{i,L}^n - \Psi_{i,L} + \theta_{i,\Tt}{\eta_i} \left(\log(c_{i,K}^n)-\log(c_{i,L}^n) \right) \Bigg)^2.
\end{multline}
\end{enumerate}
\end{thm}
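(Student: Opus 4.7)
The plan is to establish the three a priori properties first --- they are the discrete analogues of the estimates of Section~\ref{ssec:cont_estimates} and have been baked into the design of the scheme --- and then use them as uniform estimates inside a topological degree argument producing existence. Mass conservation is immediate: summing~\eqref{eq:scheme_ci} over $K \in \Tt$, the internal fluxes across each $\sig = K|L$ appear with opposite signs in the two adjacent cells and cancel, leaving $\sum_K m_K(c_{i,K}^n - c_{i,K}^{n-1}) = 0$, which combined with~\eqref{eq:ciK0} gives the stated identity.

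For the energy--dissipation inequality I would mimic the formal computation of Section~\ref{ssec:cont_estimates}. The key algebraic property of the scheme is the logarithmic-mean identity $c_{i,\sig}^n(\log c_{i,K}^n - \log c_{i,L}^n) = c_{i,K}^n - c_{i,L}^n$ built into~\eqref{eq:c_isig}, which lets me rewrite the discrete flux in~\eqref{eq:scheme_ci} as $\tau_\sig\, \eta_i^{-1} c_{i,\sig}^n\bigl[(\mu_{i,K}^n + \Psi_{i,K} + \theta_{i,\Tt}\eta_i \log c_{i,K}^n) - (\mu_{i,L}^n + \Psi_{i,L} + \theta_{i,\Tt}\eta_i \log c_{i,L}^n)\bigr]$. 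Multiplying~\eqref{eq:scheme_ci} by $\mu_{i,K}^n + \Psi_{i,K} + \theta_{i,\Tt}\eta_i\log c_{i,K}^n$, summing over $K$ and $i$, and performing a discrete integration by parts reproduces $\Ddd_\Tt(\bc^n,\bmu^n)$ exactly. On the time-derivative part I would use~\eqref{eq:scheme_c1+c2} to collapse $\sum_i (c_{i,K}^n - c_{i,K}^{n-1})\mu_{i,K}^n$ into $(c_{1,K}^n - c_{1,K}^{n-1})(\mu_{1,K}^n - \mu_{2,K}^n)$ and then substitute~\eqref{eq:scheme_dmu}. The telescoping of $\Eee_\Tt$ then rests on three elementary convexity inequalities: the bound $a(a-b) \geq \tfrac12(a^2 - b^2)$ with $a = c_{1,K}^n - c_{1,L}^n$, $b = c_{1,K}^{n-1} - c_{1,L}^{n-1}$ reconstructs the gradient part after one more discrete integration by parts; the exact algebraic identity
\[
(c_{1,K}^n - c_{1,K}^{n-1})(1-2c_{1,K}^{n-1}) = c_{1,K}^n c_{2,K}^n - c_{1,K}^{n-1}c_{2,K}^{n-1} + (c_{1,K}^n - c_{1,K}^{n-1})^2
\]
telescopes the double-well part leaving an extra non-negative remainder (this is precisely why the repulsive term in~\eqref{eq:scheme_dmu} is discretized explicitly); finally the convexity of $H$ yields $(a-b)\log a \geq H(a) - H(b)$, which telescopes the entropic part, while the potential $\Psi$ telescopes exactly.

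Positivity is obtained by induction on $n$. I assume $c_{i,K}^{n-1} \geq 0$ for all $K$ with $\sum_K m_K c_{i,K}^{n-1} = \int_\O c_i^0 \d\x > 0$ (by mass conservation and~\eqref{eq:cont_mass_pos}). Let $K_*$ be a minimizer of $K \mapsto c_{i,K}^n$ and suppose $c_{i,K_*}^n \leq 0$. Since $c_{i,K_*}^n \leq 0 \leq c_{i,L}^n$ at every neighbor $L$, definition~\eqref{eq:c_isig} makes $c_{i,\sig}^n$ vanish at every edge of $K_*$; the drift part of the flux then cancels and~\eqref{eq:scheme_ci} at $K_*$ reduces to
\[
m_{K_*}\frac{c_{i,K_*}^n - c_{i,K_*}^{n-1}}{\dt} = \sum_{\sig = K_*|L} \tau_\sig \theta_{i,\Tt}(c_{i,L}^n - c_{i,K_*}^n) \geq 0,
\]
forcing $c_{i,K_*}^n \geq c_{i,K_*}^{n-1} \geq 0$ and hence $c_{i,K_*}^n = 0$. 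Equality in the right-hand side then propagates $c_{i,L}^n = 0$ to every neighbor $L$ and, by connectedness of the mesh, to every cell, contradicting mass conservation. The strict inequality $\theta_{i,\Tt} > 0$ coming from~\eqref{eq:theta_isig} is crucial here. Running the same argument on phase $2$ and invoking~\eqref{eq:scheme_c1+c2} delivers $0 < c_{i,K}^n < 1$.

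Existence of a discrete solution is the main obstacle. After eliminating $\bc_2^n$ via~\eqref{eq:scheme_c1+c2} and fixing the additive gauge by~\eqref{eq:ov-mu_K}, the scheme becomes a nonlinear system of size $3|\Tt|$ in $(c_{1,K}^n,\mu_{1,K}^n,\mu_{2,K}^n)_{K\in\Tt}$. Since the logarithmic mean~\eqref{eq:c_isig} is not $C^1$ at $\{c_i = 0\}$, I would first regularize the nonlinearity (e.g.\ shift the arguments of the log by $\eps > 0$ or smoothly extend $c_{i,\sig}^n$ across $0$) and apply a topological degree / Leray--Schauder argument along a homotopy connecting the regularized scheme to a trivial linear problem --- say, pure thermal diffusion with source $\bc^{n-1}$ --- for which existence is obvious. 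The three a priori properties above, read off at every point of the homotopy, close degree theory: mass conservation together with~\eqref{eq:scheme_c1+c2} bounds $\bc^n$ in $L^\infty$, the energy inequality bounds $\bc^n$ in the discrete $H^1$, and the last part of Section~\ref{ssec:cont_estimates} transposes literally --- control of $\mu_1 - \mu_2$ in $L^2$ from~\eqref{eq:scheme_dmu}, then of $\ov\mu^n$ via~\eqref{eq:ov-mu_K} and a discrete Poincar\'e--Sobolev estimate on $\O \subset \R^2$, then $\mu_i = \ov\mu - c_{3-i}(\mu_1 - \mu_2)$ --- bounds $\bmu^n$ in the discrete $L^2$. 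The inverse CFL condition~\eqref{eq:CFL_nondeg} guarantees all of this uniformly in the regularization, and in particular strict positivity of $c_i$ away from $0$ uniformly in $\eps$, allowing the passage to the limit $\eps \to 0$ and producing a genuine solution of~\eqref{eq:scheme_ci}--\eqref{eq:ov-mu_K}.
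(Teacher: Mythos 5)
Parts (i)--(iii) of your argument coincide with the paper's Lemmas~\ref{lem:mass-cons}, \ref{lem:positivity} and \ref{lem:NRJ}: same summation/conservativity for the mass, same minimum-principle-plus-connectedness contradiction for positivity, and the same multiplication by $\mu_{i,K}^n + \Psi_{i,K} + \theta_{i,\Tt}\eta_i\log c_{i,K}^n$ with the same three convexity inequalities for the energy decay (your exact identity for the double-well term is just a sharpened form of the concavity inequality used for $A_{12}$, and correctly explains why the repulsive term must be explicit). These parts are fine.

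The existence step is where your sketch has a genuine gap. You propose to regularize the logarithmic mean (e.g.\ shift the logarithms by $\eps$) and then ``read off the three a priori properties at every point of the homotopy''. But the positivity proof --- and with it the $L^\infty$ bound on $\bc^\lambda$ that you invoke to close the degree argument --- rests precisely on the degeneracy of \eqref{eq:c_isig}: $c_{i,\sig}^n=0$ as soon as $\min(c_{i,K}^n,c_{i,L}^n)\leq 0$, which kills the drift flux at a nonpositive minimum and reduces the scheme there to pure diffusion. Any smooth regularization of the mobility across $c=0$ destroys exactly this property, so the maximum principle no longer applies along the homotopy and you are left with no sign or $L^\infty$ control on $\bc^\lambda$; mass conservation alone only constrains the mean. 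The paper's homotopy \eqref{eq:lambda1}--\eqref{eq:lambda5} is engineered to sidestep this: the mobility is expressed through truncations $f_\lambda$ from \eqref{eq:fp_lambda} satisfying $f_\lambda(c)+f_\lambda(1-c)=1$ and bounded below by $\tfrac{1-\lambda}2$, the variable $c^\lambda$ is left unconstrained, and the required compactness on $\bc^\lambda$ comes not from positivity but from the $L^1$-coercivity of the modified energy, i.e.\ from the super-linearity of $c\mapsto H_\lambda(c)+H_\lambda(1-c)+\tfrac{1-\lambda}2(c-\tfrac12)^2$ established in the technical Lemma~\ref{lem:g_lambda} of the appendix --- an ingredient with no counterpart in your sketch. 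A second, related omission: the dissipation controls $\bmu^\lambda$ only through the mobility, and this control degenerates as $\lambda\to1$ (equivalently $\eps\to0$ in your setup). The paper recovers a uniform $L^2$ bound on $\bmu^\lambda$ by redoing the entropy estimate of Lemma~\ref{lem:entropy} along the homotopy (testing with $p_\lambda(c^\lambda)$, using $f_{i,\sig}^\lambda(p_\lambda(c_{i,K}^\lambda)-p_\lambda(c_{i,L}^\lambda))=f_\lambda(c_{i,K}^\lambda)-f_\lambda(c_{i,L}^\lambda)$) and then the non-degeneracy argument of Lemma~\ref{lem:c1+c2}; this is where the inverse CFL condition \eqref{eq:CFL_nondeg} actually enters --- to guarantee $f_{1,\sig}^\lambda+f_{2,\sig}^\lambda\geq f^\star$ on every edge --- not to guarantee positivity of the cell values as you assert. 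Without these two repairs your homotopy does not yield the uniform bounds needed for the degree to be well defined and constant.
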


The existence of a solution to the scheme for each time step allows to reconstruct an approximate solution $(\bc_{\Tt,\dt}, \bmu_{\Tt,\dt})$ 
with $\bc_{\Tt,\dt} = (c_{1,\Tt,\dt}, c_{2,\Tt,\dt})$ and $\bmu_{\Tt,\dt} = (\mu_{1,\Tt,\dt}, \mu_{2,\Tt,\dt})$ by setting 
\be\label{eq:approx}
c_{i,\Tt,\dt}(t,\x) = c_{i,K}^n \quad \text{and}\quad \mu_{i,\Tt,\dt}(t,\x) = \mu_{i,K}^n \quad \text{if}\; (t,\x) \in (t^{n-1},t^n]\times K, \qquad i\in\{1,2\}.
\ee
The approximate solutions are expected to approximate the continuous solution to~\eqref{eq:cont_c1+c2}--\eqref{eq:cont_init}. 
Our second theorem gives a mathematical foundation to this statement. 
It requires the introduction of a suitable sequence of discretizations of $Q_T$. In what follows, we denote by 
$\left(\Tt_m, \Ee_m, {(\x_K)}_{K\in\Tt_m}\right)_{m\geq 1}$ a sequence of admissible meshes of $\O$ is the sense 
of Definition~\ref{def:mesh}. We assume that $h_{\Tt_m}$ tends to $0$ as $m\to\infty$ as well as the following regularity requirements:
\begin{itemize}
\item {\em shape regularity of the cells:} there exists a finite $\zeta>1$ such that 
\be\label{eq:reg1}
d_{K\sig} \geq \zeta d_\sig, \qquad \forall m\geq 1, \;\forall K\in\Tt_m, \;\forall \sig\in\Ee_{K,{\rm int},m},
\ee
and such that 
\be\label{eq:Ciarlet}
m_K \geq \frac1\zeta(h_K)^2, \qquad \forall m\geq 1, \;\forall K\in\Tt_m;
\ee
\item {\em boundedness of the number of edges per element:} there exists $\ell^\star\geq 3$ such that 
\be\label{eq:voisins}
\#\Ee_K \leq \ell^\star, \qquad \forall m \geq 1, \; \forall K \in \Tt_m;
\ee
\item {\em control on the transmissivities:} there exist $\tau^\star, \tau_\star \geq 0$ such that 
\be\label{eq:tau_star}
\tau^\star \geq \tau_\sig \geq \tau_\star >0, \qquad\forall m \geq 1, \; \forall \sig \in \Ee_{{\rm int},m}.
\ee
\end{itemize}
The combination of a sequence $\left(\Tt_m, \Ee_m, {(\x_K)}_{K\in\Tt_m}\right)_{m\geq 1}$ fulfilling \eqref{eq:reg1}--\eqref{eq:tau_star} 
together with a time step $\dt_m=T/N_m$ and $N_m \to +\infty$ as 
$m\to \infty$ is said to be a {\em regular discretization of $Q_T$} if it moreover satisfies the inverse CFL condition~\eqref{eq:CFL_nondeg}.

\begin{thm}\label{main:2}
Let $\left(\Tt_m, \Ee_m, {(\x_K)}_{K\in\Tt_m}, \dt_m\right)_{m\geq 1}$ be a sequence of regular discretizations of $Q_T$, and 
let $\left(\bc_{\Tt_m, \dt_m}, \bmu_{\Tt_m, \dt_m}\right)_{m\geq 1}$ be a corresponding sequence of approximate solutions. Then 
there exists a weak solution $\left(\bc,\bmu\right)$ to \eqref{eq:cont_c1+c2}--\eqref{eq:cont_init} in the sense of Definition~\ref{Def:weak} 
such that, up to a subsequence, 
\[
c_{i,\Tt_m, \dt_m} \underset{m\to\infty} \longrightarrow c_i \qquad \text{a.e. in}\; Q_T \qquad \text{and}\qquad
\mu_{i,\Tt_m, \dt_m} \underset{m\to\infty} \longrightarrow \mu_i \quad \text{weakly in}\; L^2(Q_T). 
\]
\end{thm}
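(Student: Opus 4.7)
The plan is to follow the standard compactness roadmap for convergence of finite volume schemes in three stages: gather enough uniform a priori estimates, extract weak and strong compactness, and pass to the limit in the discrete weak formulation of Definition~\ref{Def:weak}. The estimates provided by Theorem~\ref{main:1} alone do not suffice: summing the energy decay of Theorem~\ref{main:1}(iii) in time gives $0 \leq c_{i,\Tt_m,\dt_m} \leq 1$, a uniform discrete $L^\infty((0,T);H^1(\O))$-type control on $c_{1,\Tt_m,\dt_m}$ coming from the $|\grad c_1|^2$ contribution to~\eqref{eq:Eee_Tt}, and $\sum_n \dt\,\Ddd_\Tt(\bc^n,\bmu^n) \leq \Eee_\Tt(\bc^0)$, which is only the discrete analogue of the degenerate weighted bound~\eqref{eq:cont_mui_weight}. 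To obtain an $L^2(Q_T)$ bound on each $\mu_{i,\Tt_m,\dt_m}$ one must reproduce at the discrete level the continuous entropy calculation leading to~\eqref{eq:cont_FI}.

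This discrete entropy estimate is the main technical ingredient. I would multiply~\eqref{eq:scheme_ci} by $\eta_i \log(c_{i,K}^n)$ and sum over $n$, $K$ and $i$. The logarithmic mean~\eqref{eq:c_isig} is designed precisely so that the algebraic identity $(c_{i,K}^n-c_{i,L}^n)(\log c_{i,K}^n-\log c_{i,L}^n)=c_{i,\sig}^n(\log c_{i,K}^n-\log c_{i,L}^n)^2$ holds, which turns the convective contribution into a clean discrete gradient pairing, while the pure diffusion part yields a $\sum_n \dt \sum_\sig \tau_\sig (\sqrt{c_{i,K}^n}-\sqrt{c_{i,L}^n})^2$ term. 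Rewriting the residual cross term $\sum_K m_K (c_{1,K}^n - c_{2,K}^n)(\mu_{1,K}^n-\mu_{2,K}^n)$ by means of~\eqref{eq:scheme_dmu} produces a discrete quadratic form dominating $\sum_n \dt \sum_K m_K |\Delta_\Tt c_{1,K}^n|^2$ up to a lower-order term controlled by $0 \leq c_1 \leq 1$. Plugging this back into~\eqref{eq:scheme_dmu} gives an $L^2(Q_T)$ estimate on $\mu_{1,\Tt_m,\dt_m}-\mu_{2,\Tt_m,\dt_m}$. A discrete Poincaré–Sobolev inequality applied to the zero-mean reconstruction $\ov\mu_{\Tt_m,\dt_m}$, using the decomposition $\grad \ov\mu=(\mu_1-\mu_2)\grad c_1 + \sum_i c_i \grad \mu_i$ exactly as in the continuous argument, finally delivers $L^2(Q_T)$ bounds on $\mu_{1,\Tt_m,\dt_m}$ and $\mu_{2,\Tt_m,\dt_m}$ separately.

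Compactness then follows by standard arguments. Space translates of $c_{i,\Tt_m,\dt_m}$ are controlled by the discrete $H^1$ estimate and time translates by testing~\eqref{eq:scheme_ci} against solutions of appropriate discrete elliptic problems; a discrete Aubin--Simon lemma yields a subsequence converging strongly in $L^2(Q_T)$, hence almost everywhere up to a further extraction. The $L^2$ bounds of the previous step give $\mu_{i,\Tt_m,\dt_m} \rightharpoonup \mu_i$ weakly in $L^2(Q_T)$. The super-admissibility condition~\eqref{it:superadm} is used here to construct SUSHI reconstructions with $\grad_{\Tt_m} c_{1,\Tt_m,\dt_m} \to \grad c_1$ strongly in $L^2(Q_T)^2$ and $\grad_{\Tt_m} \mu_{i,\Tt_m,\dt_m} \rightharpoonup \grad \mu_i$ weakly (see Remark~\ref{rem:superadm}).

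Passing to the limit in~\eqref{eq:scheme_dmu} tested against a smooth $\varphi$ requires only the strong convergence of $c_{1,\Tt_m,\dt_m}$ and of its reconstructed gradient, and is routine. For~\eqref{eq:scheme_ci}, the critical term is the discrete convective flux $\sum_\sig \tau_\sig c_{i,\sig}^n (\mu_{i,K}^n+\Psi_{i,K}-\mu_{i,L}^n-\Psi_{i,L})$: since $c_{i,\sig}^n$ is a continuous bounded function of $(c_{i,K}^n,c_{i,L}^n)$, the a.e. convergence of $c_{i,\Tt_m,\dt_m}$ transfers to $c_{i,\sig}^n$ on diamond cells and yields the strong factor needed to balance the weakly converging reconstructed gradient of $\mu_i+\Psi_i$. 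The numerical diffusion coefficient $\theta_{i,\Tt_m}=\max(\theta_i,\rho h_{\Tt_m})$ tends to $\theta_i$: when $\theta_i>0$ it matches the continuous diffusion directly, while when $\theta_i=0$ the excess vanishes in the limit thanks to the $L^\infty$ and discrete $H^1$ controls on $c_i$. The zero-mean condition~\eqref{eq:ov-mu_K} passes to~\eqref{eq:cont_ovmu} by combining the weak $L^2$ convergence of $\mu_i$ with the strong $L^2$ convergence of $c_i$. I expect the discrete entropy estimate of the second paragraph to be the principal obstacle, since it requires a careful algebraic manipulation mimicking the continuous chain rule and the identification of $\int(\Delta c_1)^2$, both of which rely crucially on~\eqref{eq:c_isig} and on the super-admissibility~\eqref{it:superadm} to make the relevant discrete Laplacian structure operational.
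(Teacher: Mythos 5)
Your overall roadmap (a priori estimates, discrete entropy/chemical-potential bounds, compactness, passage to the limit) matches the paper's, and the discrete entropy estimate you describe is indeed carried out there (Lemmas~\ref{lem:entropy} and~\ref{lem:mu}). But the step you dispatch as routine is precisely the one that fails as you state it. You write that the a.e.\ convergence of $c_{i,\sigma}^n$ provides ``the strong factor needed to balance the weakly converging reconstructed gradient of $\mu_i+\Psi_i$'', and earlier you assert $\grad_{\Tt_m}\mu_{i,\Tt_m,\dt_m}\rightharpoonup\grad\mu_i$ weakly in $L^2$. No such weak convergence is available: the only control on the discrete gradient of $\mu_i$ is the \emph{weighted} bound of Lemma~\ref{lem:c-grad-mu}, $\sum_n\dt\sum_\sigma\tau_\sigma c_{i,\sigma}^n(\mu_{i,K}^n-\mu_{i,L}^n)^2\leq C$, which degenerates where $c_{i,\sigma}^n$ vanishes; $\mu_i$ is not claimed to lie in $L^2(0,T;H^1)$ even in Definition~\ref{Def:weak}. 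A ``strong $\times$ weak'' argument therefore cannot identify the limit of the flux. The paper instead bounds the vector field $\bV_{i,\sigma}^n=2c_{i,\sigma}^n(\mu_{i,K}^n-\mu_{i,L}^n)\n_{KL}/d_\sigma$ in $L^2$ (using $(c_{i,\sigma}^n)^2\leq c_{i,\sigma}^n$), extracts a weak limit, and identifies it as $-c_i\grad\mu_i$ through a discrete product rule $c\grad\mu=\grad(c\mu)-\mu\grad c$ (the decomposition $B^{(1)}+\dots+B^{(4)}$ of Lemma~\ref{lem:conv_cigradmu}). The commutator term $B^{(1)}$, measuring the discrepancy between the logarithmic edge mean and the arithmetic one, is controlled only because $\theta_{i,\Tt}\geq\rho h_\Tt$ lets the entropy dissipation absorb it; Remark~\ref{rmk:SG} stresses that this is exactly where the argument breaks for other flux choices. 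This identification, not the entropy estimate, is the principal obstacle, and your proposal contains no mechanism for it.

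A second, smaller gap: you invoke the decomposition $\grad\ov\mu=(\mu_1-\mu_2)\grad c_1+\sum_i c_i\grad\mu_i$ ``exactly as in the continuous argument'' to get the $L^2$ bound on $\ov\mu$, but at the discrete level the term $\sum_i c_{i,L}^n|\mu_{i,K}^n-\mu_{i,L}^n|$ must be dominated by the weighted quantities $c_{i,\sigma}^n|\mu_{i,K}^n-\mu_{i,L}^n|$, which requires $c_{1,\sigma}^n+c_{2,\sigma}^n\geq c_\star>0$. With the logarithmic mean this sum can vanish (e.g.\ $c_{1,K}^n=1$, $c_{1,L}^n=0$), and the paper rules this out only via the inverse CFL condition~\eqref{eq:CFL_nondeg} and Lemma~\ref{lem:c1+c2}. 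Your proposal never uses this hypothesis, so the discrete $BV$ estimate on $\ov\mu$ and hence the separate $L^2$ bounds on $\mu_1,\mu_2$ are not actually secured.
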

The convergence properties stated in Theorem~\ref{main:2} are weaker than what is practically proved in the paper. 
The statement of optimal convergence properties would require the introduction of additional material that we postpone 
to the proof in order to optimize the readability of the paper. 

The remaining of the paper is organized as follows. In Section~\ref{sec:apriori}, we work at fixed mesh and time step. 
We derive some a priori estimates and show the existence of (at least) one solution to the scheme thanks to a topological degree 
argument. Next in Section~\ref{sec:conv}, we show thanks to compactness arguments that the approximate solution converge 
towards a weak solution to the scheme. Finally, we present in Section~\ref{sec:num} some numerical simulations.

\section{A priori estimates and existence of a discrete solution}\label{sec:apriori}

In Section~\ref{ssec:a_priori}, we first derive some a priori estimates on the solutions to the scheme~\eqref{eq:scheme_ci}--\eqref{eq:ov-mu_K}. 
These estimates will be at the basis of the existence proof for a discrete solution to the scheme in Section~\ref{ssec:existence}, but also 
of the convergence proof carried out in Section~\ref{sec:conv}.

\subsection{A priori estimates}\label{ssec:a_priori}

This section is devoted to the derivation to all the a priori estimates needed in the numerical analysis of the scheme. 
The first of them is the global conservation of mass, which is a consequence of the local conservativity of the scheme. 

\begin{lem}\label{lem:mass-cons}
For all $n \geq 0$ and $i \in \{1,2\}$, there holds
\be\label{eq:cons_mass}
\sum_{K\in\Tt} m_K c_{i,K}^n = \int_\O c_i^0 \d\x >0. 
\ee
\end{lem}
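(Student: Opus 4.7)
The plan is to sum the discrete mass balance \eqref{eq:scheme_ci} over all cells $K\in\Tt$ and exploit the local conservativity of the two-point flux discretization to eliminate the flux terms, then iterate the resulting identity in the time index and match it to the initialization \eqref{eq:ciK0}.

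More precisely, I would first fix $i\in\{1,2\}$ and $n\geq 1$, and sum \eqref{eq:scheme_ci} over $K\in\Tt$. The double sum $\sum_{K\in\Tt}\sum_{\sig\in\Ee_{K,\rm int},\sig=K|L}$ can be reorganized as a sum over internal edges $\sig=K|L\in\Ee_{\rm int}$ in which each edge contributes twice, once for the cell $K$ and once for the neighbor $L$. The key observation is that the flux-like bracket
\[
\frac{c_{i,\sig}^n}{\eta_i}\left(\mu_{i,K}^n+\Psi_{i,K}-\mu_{i,L}^n-\Psi_{i,L}\right)+\theta_{i,\Tt}(c_{i,K}^n-c_{i,L}^n)
\]
is antisymmetric in the exchange $K\leftrightarrow L$ (note that $c_{i,\sig}^n$ defined by \eqref{eq:c_isig} is symmetric in $K,L$, and similarly $\tau_\sig$ does not depend on the orientation). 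Hence these two contributions cancel, and only the time-derivative term survives:
\[
\sum_{K\in\Tt} m_K\,\frac{c_{i,K}^n-c_{i,K}^{n-1}}{\dt}=0.
\]

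A straightforward induction on $n$ then gives $\sum_{K\in\Tt} m_K c_{i,K}^n=\sum_{K\in\Tt} m_K c_{i,K}^0$ for every $n\geq 0$. To finish, I would plug in the initialization \eqref{eq:ciK0} to write
\[
\sum_{K\in\Tt} m_K c_{i,K}^0 = \sum_{K\in\Tt}\int_K c_i^0\,\d\x = \int_\O c_i^0\,\d\x,
\]
and invoke assumption \eqref{eq:cont_mass_pos} for strict positivity of the right-hand side.

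There is no real obstacle here: the only thing to be careful about is that the argument does not require the existence of the discrete solution to be established yet (it is purely algebraic, working for any tuple $(\bc^n,\bmu^n)$ satisfying \eqref{eq:scheme_ci}), and that the edge value $c_{i,\sig}^n$ used in \eqref{eq:scheme_ci} is symmetric in its two adjacent cells, which is immediate from definition \eqref{eq:c_isig}.
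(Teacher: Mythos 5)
Your proof is correct and follows exactly the same route as the paper: sum \eqref{eq:scheme_ci} over $K\in\Tt$, use the conservativity (antisymmetry of the edge contributions, since $\tau_\sig$ and $c_{i,\sig}^n$ are symmetric in $K\leftrightarrow L$) to cancel the fluxes, induct in $n$, and conclude via \eqref{eq:ciK0} and \eqref{eq:cont_mass_pos}. The paper merely compresses the cancellation step into the phrase ``using the conservativity of the scheme''; your version spells it out.
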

\begin{proof}
Summing~\eqref{eq:scheme_ci} over $K \in \Tt$ and using the conservativity of the scheme leads to 
\[
\sum_{K\in\Tt} m_K c_{i,K}^n
= \sum_{K\in\Tt} m_K c_{i,K}^{n-1} , 
\qquad i\in\{1,2\}, \; n \in \{1,\dots, N\}. 
\]
A straightforward induction and the definition~\eqref{eq:ciK0} of $c_{i,K}^0$ then provides~\eqref{eq:cons_mass}.
\end{proof}

Our second lemma shows that the volume fractions are positive. 
\begin{lem}\label{lem:positivity}
Let $n \geq 1$, and let $\left(\bc^n,\bmu^n\right)$ be a  solution to the scheme~\eqref{eq:scheme_ci}--\eqref{eq:ov-mu_K}, then
\be\label{eq:positivity}
0 < c_{i,K}^n < 1, \qquad \forall K \in \Tt, \; \forall n \geq 1, \; \forall i \in \{1,2\}.
\ee
\end{lem}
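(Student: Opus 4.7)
My plan is to prove the bound by induction on $n$, treating the strict upper bound $c_{i,K}^n < 1$ as a free consequence of the strict lower bound applied to the other phase via the constraint~\eqref{eq:scheme_c1+c2}. The base case $n=0$ uses nonnegativity of $c_i^0$. For the inductive step, I would fix $i\in\{1,2\}$ and $n\geq 1$, and use the standard discrete minimum principle: pick a cell $K^\star \in \Tt$ achieving $\min_{K\in\Tt} c_{i,K}^n$ and argue by contradiction that $c_{i,K^\star}^n \leq 0$ is impossible.

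The key structural observation is that the logarithmic mean definition~\eqref{eq:c_isig} was tailored precisely so that $c_{i,\sig}^n = 0$ as soon as one of the two adjacent cell-values is nonpositive. Consequently, if $c_{i,K^\star}^n \leq 0$ then $c_{i,\sig}^n = 0$ for every $\sig\in \Ee_{K^\star,\rm int}$, which kills the entire chemical-potential / external-potential contribution in the scheme~\eqref{eq:scheme_ci} at $K^\star$. What remains is
\[
m_{K^\star} \frac{c_{i,K^\star}^n - c_{i,K^\star}^{n-1}}{\dt} + \theta_{i,\Tt} \sum_{\substack{\sig \in \Ee_{K^\star,\rm int}\\\sig=K^\star|L}}\tau_\sig \left(c_{i,K^\star}^n - c_{i,L}^n\right) = 0.
\]
Both terms are nonpositive: the first because $c_{i,K^\star}^n \leq 0 \leq c_{i,K^\star}^{n-1}$ by the induction hypothesis, and each summand because $K^\star$ minimizes $c_{i,K}^n$. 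Since $\tau_\sig > 0$ and $\theta_{i,\Tt} > 0$ (guaranteed by~\eqref{eq:theta_isig} with $\rho>0$ in the deep-quench case), equality of each term to $0$ forces $c_{i,L}^n = c_{i,K^\star}^n$ for every neighbor $L$ of $K^\star$.

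Propagating this equality through adjacent cells using the connectedness of $\Omega$ (and hence of the cell-adjacency graph of $\Tt$), I obtain $c_{i,K}^n = c_{i,K^\star}^n \leq 0$ for every $K \in \Tt$. Multiplying by $m_K$ and summing then contradicts the mass conservation identity~\eqref{eq:cons_mass} of Lemma~\ref{lem:mass-cons}, which gives $\sum_K m_K c_{i,K}^n = \int_\O c_i^0 \d\x > 0$ by~\eqref{eq:cont_mass_pos}. Hence $c_{i,K^\star}^n > 0$, which yields $c_{i,K}^n > 0$ for every $K \in \Tt$. Applying this to $i=1$ and $i=2$ separately and combining with~\eqref{eq:scheme_c1+c2} gives $c_{i,K}^n = 1 - c_{j,K}^n < 1$ for $j\neq i$, completing~\eqref{eq:positivity}.

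The only subtle point is ensuring that the logarithmic-mean convention really annihilates the flux on every edge touching $K^\star$; this is why the second branch of~\eqref{eq:c_isig} is formulated with the non-strict inequality $\min(c_{i,K}^n,c_{i,L}^n)\leq 0$, so the argument goes through even when $c_{i,K^\star}^n = 0$ exactly. I do not expect any further obstacle, since positivity of $\theta_{i,\Tt}$ and connectedness of the mesh are built into the hypotheses.
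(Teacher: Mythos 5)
Your proof is correct and follows essentially the same route as the paper: the discrete minimum principle at a minimizing cell $K^\star$, the observation that the logarithmic-mean convention~\eqref{eq:c_isig} annihilates the flux terms when $c_{i,K^\star}^n\leq 0$, propagation of the equality to all cells by connectedness, and the contradiction with Lemma~\ref{lem:mass-cons}; the upper bound via~\eqref{eq:scheme_c1+c2} is also how the paper concludes. Your explicit attention to the non-strict inequality in~\eqref{eq:c_isig} and to the sign of the time-increment term is a slightly more careful writing of the same argument.
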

\begin{proof}
Assume by induction that $0 \leq c_{i,K}^{n-1}$,  and 
suppose for contradiction that 
\[c_{i,K}^n = \min_{L\in\Tt} c_{i,L}^n \leq 0,\] 
so  that $c_{i,\sig}^n =0$ for all $\sig \in \Ee_{K, \rm int}$. 
Therefore, on this specific control volume $K$, the scheme~\eqref{eq:scheme_ci} reduces to 
\[
m_K \frac{c_{i,K}^n - c_{i,K}^{n-1}}{\dt} 
+  \theta_{i,\Tt}  \sum_{\substack{\sig\in\Ee_{K,\rm int}\\\sig = K|L}}\tau_\sig (c_{i,K}^n - c_{i,L}^n) = 0.
\]
The left-hand side is negative unless $c_{i,L}^n = c_{i,K}^n \leq 0$ for all the neighbouring cells $L$ of $K$. We can thus iterate
the argument and show that $c_{i,L}^n {\leq} 0$ for all $L \in \Tt$, which provides a contradiction because of Lemma~\ref{lem:mass-cons}.
\end{proof}

As a consequence of Lemma~\ref{lem:positivity}, the quantities $\log(c_{i,K}^n)$ have a sense. 
They will be used many times along the paper. Our next lemma consists in discrete counterparts of 
the energy / energy dissipation relations~\eqref{eq:cont_EDE}--\eqref{eq:cont_EDE2}.

\begin{lem}\label{lem:NRJ}
Let $\left(\bc^n,\bmu^n\right)$ be a  solution to the scheme~\eqref{eq:scheme_ci}--\eqref{eq:ov-mu_K}, then 
the following discrete energy dissipation relation holds 
\[
\frac{\Eee_\Tt(\bc^n) - \Eee_\Tt(\bc^{n-1})}{\dt} + \Ddd_\Tt(\bc^n, \bmu^n) \leq 0, \qquad \forall n \geq 1, 
\]
where the discrete energy $\Eee_\Tt$ and the discrete dissipation $\Ddd_\Tt$ are defined by~\eqref{eq:Eee_Tt} and \eqref{eq:Ddd_Tt}  respectively.
\end{lem}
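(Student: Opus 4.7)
The plan is to mimic at the discrete level the formal continuous derivation of the energy/dissipation identity~\eqref{eq:cont_EDE}. Concretely, I will multiply the scheme~\eqref{eq:scheme_ci} by the discrete counterpart
$A_{i,K}^n := \mu_{i,K}^n + \Psi_{i,K} + \eta_i\theta_{i,\Tt}\log(c_{i,K}^n)$
of $\mu_i+\Psi_i+\eta_i\theta_i\log c_i$ (well-defined by Lemma~\ref{lem:positivity}), then sum over $K\in\Tt$ and $i\in\{1,2\}$, and handle the flux term and the discrete time-derivative term separately.

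For the flux term, the crucial algebraic point is the defining property of the logarithmic mean~\eqref{eq:c_isig}, namely
$c_{i,\sig}^n(\log c_{i,K}^n-\log c_{i,L}^n)=c_{i,K}^n-c_{i,L}^n$.
This lets the flux $F_{i,K,\sig}^n$ appearing in~\eqref{eq:scheme_ci} be rewritten exactly as $\frac{c_{i,\sig}^n}{\eta_i}(A_{i,K}^n-A_{i,L}^n)$. A standard discrete integration by parts (Abel summation over pairs $\sig=K|L$, using the local conservativity $F_{i,K,\sig}^n=-F_{i,L,\sig}^n$) then produces precisely $\Ddd_\Tt(\bc^n,\bmu^n)$ as defined in~\eqref{eq:Ddd_Tt}.

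For the discrete time derivative $\sum_{i,K} m_K\tfrac{c_{i,K}^n-c_{i,K}^{n-1}}{\dt}A_{i,K}^n$, I split the three contributions of $A_{i,K}^n$. The $\Psi_{i,K}$ part telescopes trivially into the time increment of $\sum_K m_K c_{i,K}^n\Psi_{i,K}$. The $\log$ part is handled by convexity of $H$: since $H'=\log$, one has $\log(c^n)(c^n-c^{n-1})\ge H(c^n)-H(c^{n-1})$, giving the contribution of the mixing entropy in $\Eee_\Tt$. For the $\mu$ part, the constraint~\eqref{eq:scheme_c1+c2} yields $c_{2,K}^n-c_{2,K}^{n-1}=-(c_{1,K}^n-c_{1,K}^{n-1})$, so this contribution reduces to $\sum_K m_K\tfrac{c_{1,K}^n-c_{1,K}^{n-1}}{\dt}(\mu_{1,K}^n-\mu_{2,K}^n)$, which I evaluate by substituting the discrete equation~\eqref{eq:scheme_dmu} for $\mu_{1,K}^n-\mu_{2,K}^n$. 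The resulting Dirichlet piece $\alpha\sum_K(c_{1,K}^n-c_{1,K}^{n-1})\sum_{\sig=K|L}\tau_\sig(c_{1,K}^n-c_{1,L}^n)$ is reorganized by another Abel summation and the elementary inequality $a(a-b)\ge\tfrac12(a^2-b^2)$ into the time increment of the discrete $\tfrac\alpha2\sum_\sig\tau_\sig(c_{1,K}^n-c_{1,L}^n)^2$ appearing in~\eqref{eq:Eee_Tt}.

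The step I expect to require the most care is the treatment of the explicit repulsive term $\kappa(1-2c_{1,K}^{n-1})$ in~\eqref{eq:scheme_dmu}, because it is the origin of the inequality (rather than equality) in the statement. Using the identity
\[
(c_{1,K}^n-c_{1,K}^{n-1})(1-2c_{1,K}^{n-1}) = c_{1,K}^n c_{2,K}^n - c_{1,K}^{n-1} c_{2,K}^{n-1} + (c_{1,K}^n-c_{1,K}^{n-1})^2,
\]
which is just the expansion of $f(c)=c(1-c)$ together with $c_2=1-c_1$, and discarding the nonnegative residual $\kappa\sum_K m_K(c_{1,K}^n-c_{1,K}^{n-1})^2\ge0$, gives the missing piece $\kappa\sum_K m_K(c_{1,K}^n c_{2,K}^n - c_{1,K}^{n-1} c_{2,K}^{n-1})$ of $\Eee_\Tt(\bc^n)-\Eee_\Tt(\bc^{n-1})$. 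Adding all contributions and dividing by $\dt$ gives
$\tfrac{\Eee_\Tt(\bc^n)-\Eee_\Tt(\bc^{n-1})}{\dt}+\Ddd_\Tt(\bc^n,\bmu^n)\le 0$, as claimed.
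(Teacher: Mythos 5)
Your proposal is correct and follows essentially the same route as the paper's proof: multiply~\eqref{eq:scheme_ci} by $\mu_{i,K}^n + \Psi_{i,K} + \eta_i\theta_{i,\Tt}\log(c_{i,K}^n)$, sum by parts to produce $\Ddd_\Tt(\bc^n,\bmu^n)$ via the logarithmic-mean identity, and bound the time-increment terms below by the energy increment through the same convexity/concavity inequalities (your exact identity $(c_{1,K}^n-c_{1,K}^{n-1})(1-2c_{1,K}^{n-1}) = c_{1,K}^n c_{2,K}^n - c_{1,K}^{n-1}c_{2,K}^{n-1} + (c_{1,K}^n-c_{1,K}^{n-1})^2$ is precisely the concavity estimate the paper uses for the explicit repulsive term). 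The only minor quibble is that the inequality in the statement also stems from the convexity bounds on the $H$-term and the Dirichlet term, not solely from the explicit treatment of $\kappa(1-2c_{1,K}^{n-1})$.
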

\begin{proof}
Multiplying \eqref{eq:scheme_ci} by $\mu_{i,K}^n + \Psi_{i,K} + \theta_{i,\Tt} {{\eta_i}} \log(c_{i,K}^n)$ and summing over $i\in\{1,2\}$ and $K\in\Tt$
yields
\be
\label{eq:NRJ_A123}
A_1 + A_2 + A_3+ \Ddd_\Tt{(\bc^n, \bmu^n)} = 0,
\ee
where 
\[
A_1 = \sum_{K\in\Tt}\frac{m_K}{\dt} \sum_{i\in\{1,2\}}(c_{i,K}^n - c_{i,K}^{n-1}) \mu_{i,K}^n,\qquad
A_2 =  \sum_{K\in\Tt}\frac{m_K}{\dt} \sum_{i\in\{1,2\}} (c_{i,K}^n - c_{i,K}^{n-1}) \theta_{i,\Tt} \eta_i \log(c_{i,K}^n), 
\]
and
\be\label{eq:NRJ_A3}
A_3 =  \sum_{K\in\Tt}\frac{m_K}{\dt} \sum_{i\in\{1,2\}} (c_{i,K}^n - c_{i,K}^{n-1}) \Psi_{i,K}.
\ee
It follows from a convexity inequality that 
\be\label{eq:NRJ_A2}
A_2 \geq  \sum_{K\in\Tt}\frac{m_K}{\dt} \sum_{i\in\{1,2\}} \theta_{i,\Tt} \eta_i \left(H(c_{i,K}^n) - H(c_{i,K}^{n-1})\right). 
\ee
Using~\eqref{eq:scheme_c1+c2} and~\eqref{eq:scheme_dmu}, the term $A_1$ rewrites
\be\label{eq:A1}
A_1 =   \sum_{K\in\Tt}\frac{m_K}{\dt} \left(c_{1,K}^n - c_{1,K}^{n-1}\right)\left(\mu_{1,K}^n - \mu_{2,K}^n\right) = A_{11} + A_{12},
\ee
with 
\begin{align*}
A_{11}=&  \frac{\alpha}{\dt} \sum_{\substack{\sig \in \Ee_{\rm int}\\\sig = K|L}} \tau_\sig \left(c_{1,K}^n - c_{1,L}^n\right)
\left(c_{1,K}^n - c_{1,L}^n - c_{1,K}^{n-1} + c_{1,L}^{n-1}\right) \\
A_{12} = &\kappa \sum_{K\in\Tt}\frac{m_K}{\dt} (1-2 c_{1,K}^{n-1}) \left(c_{1,K}^n - c_{1,K}^{n-1}\right).
\end{align*}
Using again elementary convexity inequalities, one gets that 
\be\label{eq:NRJ_A11}
A_{11} \geq  \frac{\alpha}{2\dt} \sum_{\substack{\sig \in \Ee_{\rm int}\\\sig = K|L}} \tau_\sig 
\left((c_{1,K}^n - c_{1,L}^n)^2 {- (c_{1,K}^{n-1} - c_{1,L}^{n-1})^2}\right), 
\ee
and
\[
A_{12} \geq \kappa \sum_{K\in\Tt}\frac{m_K}{\dt} \left\{c_{1,K}^n (1-c_{1,K}^n) - c_{1,K}^{n-1} (1-c_{1,K}^{n-1})\right\}.
\]
The relation~\eqref{eq:scheme_c1+c2} allows to rewrite the right-hand side of the above inequality, so that 
\be\label{eq:NRJ_A12}
A_{12} \geq \kappa \sum_{K\in\Tt}\frac{m_K}{\dt} \left\{c_{1,K}^nc_{2,K}^n  - c_{1,K}^{n-1}c_{2,K}^{n-1}\right\}.
\ee
The combination of~\eqref{eq:NRJ_A3}--\eqref{eq:NRJ_A12} in~\eqref{eq:NRJ_A123} concludes the proof of Lemma~\ref{lem:NRJ}.
\end{proof}

The boundedness of the discrete energy $\Eee(\bc^n)$ provides a discrete $L^\infty((0,T);H^1(\O))$ estimate on the volume fractions, 
as established in the next corollary.
\begin{coro}\label{coro:LinfH1}
There exists $\ctel{LinfH1}$ depending only on $\O$, $\alpha$, $\kappa$, $\Psi$, $\theta_i$, $c_{i}^0$, and $\zeta$ such that 
\be\label{eq:LinfH1}
\sum_{\substack{\sig \in \Ee_{\rm int}\\\sig = K|L}} \tau_\sig(c_{1,K}^n - c_{1,L}^n)^2 \leq \cter{LinfH1}.
\ee
\end{coro}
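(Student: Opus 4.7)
\textbf{Proof plan for Corollary~\ref{coro:LinfH1}.} The approach is to extract the desired bound from the energy decay of Lemma~\ref{lem:NRJ}, after checking that the remaining contributions to $\Eee_\Tt(\bc^n)$ are under control and that the initial energy is bounded uniformly in the mesh.

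Iterating the inequality of Lemma~\ref{lem:NRJ} from $1$ to $n$ and discarding the nonnegative dissipation gives $\Eee_\Tt(\bc^n) \leq \Eee_\Tt(\bc^0)$. Inspecting the expression~\eqref{eq:Eee_Tt} of $\Eee_\Tt$, the term $\kappa\, c_{1,K}^n c_{2,K}^n$ is nonnegative by Lemma~\ref{lem:positivity}, and $\theta_{i,\Tt}\eta_i H(c_{i,K}^n)$ is nonnegative since $H \geq 0$. The potential contribution $c_{i,K}^n \Psi_{i,K}$ is not sign-definite, but $0 < c_{i,K}^n < 1$ together with $\Psi_{i,K}$ being the cell-average of $\Psi_i$ yield
\[
\left| \sum_{K\in\Tt} m_K c_{i,K}^n \Psi_{i,K} \right| \leq \|\Psi_i\|_{L^1(\O)}.
\]
Isolating the discrete $H^1$ term in~\eqref{eq:Eee_Tt} therefore provides
\[
\frac{\alpha}{2} \sum_{\substack{\sig \in \Ee_{\rm int}\\\sig = K|L}} \tau_\sig (c_{1,K}^n - c_{1,L}^n)^2 \leq \Eee_\Tt(\bc^0) + \sum_{i\in\{1,2\}}\|\Psi_i\|_{L^1(\O)}.
\]

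It then remains to bound $\Eee_\Tt(\bc^0)$ uniformly in $\Tt$. The zero-order terms are handled readily: $c_{1,K}^0 c_{2,K}^0 \leq 1/4$ bounds the mixing term by $\kappa |\O|/4$; the potential term is estimated as above; and $0 \leq H(c) \leq 1$ on $[0,1]$ combined with the uniform bound $\theta_{i,\Tt} \leq \max(\theta_i, \rho\,\diam(\O))$, valid as soon as $h_\Tt \leq \diam(\O)$, controls the entropy term. The main obstacle is the initial discrete gradient $\sum_\sig \tau_\sig (c_{1,K}^0 - c_{1,L}^0)^2$, which I would bound by $C(\zeta)\,\|\grad c_1^0\|_{L^2(\O)}^2$ via a classical $L^2$-stability estimate for the projection of an $H^1$ function onto piecewise constants: writing the jump of cell averages across $\sig = K|L$ as a double integral of $c_1^0(x)-c_1^0(y)$ over $K\times L$, applying a Poincar\'e--Wirtinger-type inequality on each half-diamond $D_{K\sig}$ (whose shape is controlled by~\eqref{eq:reg1} and the orthogonality property~(\ref{it:ortho})), and exploiting the identity $m_{D_\sig} = d_\sig m_\sig/2$ so that the factor $\tau_\sig = m_\sig/d_\sig$ collapses into an integral over $D_\sig$; summation over edges with the bounded-overlap property of the diamonds yields the announced inequality, and combining all contributions produces the constant $\cter{LinfH1}$ depending only on $\O$, $\alpha$, $\kappa$, $\Psi$, $\theta_i$, $c_i^0$, and $\zeta$.
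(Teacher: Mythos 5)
Your argument is correct and follows essentially the same route as the paper: energy decay from Lemma~\ref{lem:NRJ}, discarding the nonnegative mixing and entropy terms, absorbing the non-sign-definite potential contribution via $\|\Psi_i\|_{L^1(\O)}$, and then bounding the initial discrete energy term by term. The only differences are cosmetic: the paper simply invokes \cite[Lemma 9.4]{EGH00} for the discrete $H^1$-stability of the initial projection (the estimate you sketch via the half-diamond argument) and uses Jensen's inequality rather than $0\leq H\leq 1$ for the entropy term.
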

\begin{proof}
As a straightforward consequence of Lemma~\ref{lem:NRJ}, the energy is decaying along the time steps, so that 
\begin{multline*}
\sum_{\substack{\sig \in \Ee_{\rm int}\\\sig = K|L}} \tau_\sig(c_{1,K}^n - c_{1,L}^n)^2 
+ \frac{2}{\alpha}\sum_{i\in\{1,2\}} { \sum_{K\in\Tt} }m_K c_{i,K}^n \Psi_{i,K} \leq \frac{2}\alpha\Eee_\Tt(\bc^n) \leq\frac{2}\alpha \Eee_\Tt(\bc^0) \\
\leq \sum_{\substack{\sig\in\Ee_{\rm int}\\ \sig=K|L}} \tau_\sig (c_{1,K}^0 - c_{1,L}^0)^2 + \frac{2}{\alpha} \sum_{K\in\Tt} m_K \left\{\kappa c_{1,K}^0 c_{2,K}^0  
+ \sum_{i\in\{1,2\}} \left[\Psi_{i,K} c_{i,K}^0  + \theta_{i,\Tt}\eta_i H(c_{i,K}^0)\right]\right\}.
\end{multline*}
Since $c_{1,K}^0 + c_{2,K}^0 = 1$, there holds 
\[
\sum_{K\in\Tt} m_K \kappa c_{1,K}^0 c_{2,K}^0 \leq \frac{\kappa}{4} |\O|.
\]
Owing to \cite[Lemma 9.4]{EGH00}, there exists $\ctel{EGH00}$ depending only on $\zeta$ such that 
\[
\sum_{\substack{\sig\in\Ee_{\rm int}\\ \sig=K|L}} \tau_\sig (c_{1,K}^0 - c_{1,L}^0)^2  \leq \cter{EGH00} \int_\O |\grad c_1^0|^2\d\x,
\]
whereas Jensen's inequality ensures that 
\[
\sum_{K\in\Tt} m_K \theta_{i,\Tt}\eta_i H(c_{i,K}^0) 
\leq \theta_{i,\Tt} \eta_i\int_\O H(c_i^0) \d\x 
\leq \theta_{i,\Tt} \eta_i |\O|.
\]
Finally, since $0 \leq c_{i,K}^n \leq 1$ for $n \geq 0$, we have 
\[
\sum_{\substack{\sig \in \Ee_{\rm int}\\\sig = K|L}} \tau_\sig(c_{1,K}^n - c_{1,L}^n)^2 \leq \cter{EGH00} \int_\O |\grad c_1^0|^2\d\x 
+ \frac{|\O|}\alpha \left(\frac{\kappa}{2} + 2 \sum_{i\in\{1,2\}} \left(\theta_{i,\Tt} \eta_i + \frac2{|\O|}\|\Psi_i\|_{L^1(\O)}\right)\right).
\]
\end{proof}

Let is now focus on the quantification of the production of mixing entropy at the discrete level. Our next lemma provides 
a discrete counterpart to Estimate~\eqref{eq:cont_FI}.
\begin{lem}\label{lem:entropy}
There exists $\ctel{FI}$ depending only on $\O$, $\a$, $\k$, $T$, $\Psi_i$, $\eta_i$, $\theta_i$, $\zeta$ and $c_{1}^0$  such that 
\begin{multline}\label{eq:entropy}
\sum_{n=1}^N \dt \sum_{K\in\Tt} m_K \left(\frac1{m_K} \sum_{\substack{\sig \in \Ee_{K,\rm int}\\\sig = K|L}} \tau_\sig (c_{1,K}^n - c_{1,L}^n)\right)^2 \\
+ \sum_{n=1}^N \dt \sum_{i\in\{1,2\}} \eta_i \,\theta_{i,\Tt}\sum_{\substack{\sig \in \Ee_{\rm int}\\\sig = K|L}} \tau_\sig (c_{i,K}^n - c_{i,L}^n)(\log(c_{i,K}^n) - \log(c_{i,L}^n)) 
\leq \cter{FI}.
\end{multline}
As a consequence, there exists $\ctel{dmu}$ depending only on $\O$, $\a$, $\k$, $T$, $\Psi_i$, $\eta_i$, $\theta_i$, $\zeta$ and $c_{1}^0$ such that 
\be
\label{eq:dmu}
\sum_{n=1}^N \dt \sum_{K\in\Tt} m_K \left( \mu_{1,K}^n - \mu_{2,K}^n \right)^2 \leq \cter{dmu}.
\ee
\end{lem}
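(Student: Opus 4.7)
The plan is to reproduce the formal continuous computation leading to \eqref{eq:cont_FI} at the discrete level. The decisive ingredient is the logarithmic-mean identity
\[
c_{i,\sig}^n\bigl(\log(c_{i,K}^n)-\log(c_{i,L}^n)\bigr) = c_{i,K}^n - c_{i,L}^n, \qquad \forall\, \sig=K|L \in \Ee_{\rm int},
\]
which makes sense (and holds) thanks to Lemma~\ref{lem:positivity} and the very definition \eqref{eq:c_isig}. I would test \eqref{eq:scheme_ci} with $\dt\,\eta_i\log(c_{i,K}^n)$, sum over $K\in\Tt$, $i\in\{1,2\}$, and $n\in\{1,\dots,N\}$, and perform a discrete edge integration by parts. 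The above identity then turns the convective flux into $\sum_\sig \tau_\sig(c_{i,K}^n-c_{i,L}^n)(\mu_{i,K}^n+\Psi_{i,K}-\mu_{i,L}^n-\Psi_{i,L})$, while the thermal flux produces exactly the second term appearing on the left-hand side of \eqref{eq:entropy}.

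For the time-derivative term, the convexity inequality $(b-a)\log b \geq H(b)-H(a)$ produces a telescoping contribution bounded below by $-\sum_i \eta_i|\Omega|$ (since $H\in[0,1]$ on $[0,1]$). The external-potential contribution is controlled after summing over $i\in\{1,2\}$ by a Cauchy--Schwarz estimate that combines the uniform bound of Corollary~\ref{coro:LinfH1} with the discrete $H^1$ control of $\Psi_i$ coming from \cite[Lemma~9.4]{EGH00}. The genuinely interesting term is the chemical-potential one: using $c_1+c_2=1$ at each cell, summation over $i$ condenses it into $\sum_\sig \tau_\sig(c_{1,K}^n-c_{1,L}^n)\bigl[(\mu_{1,K}^n-\mu_{2,K}^n)-(\mu_{1,L}^n-\mu_{2,L}^n)\bigr]$. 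Setting $A_K^n := \frac{1}{m_K}\sum_{\sig=K|L\in\Ee_{K,\rm int}}\tau_\sig(c_{1,K}^n-c_{1,L}^n)$ and inserting \eqref{eq:scheme_dmu}, the $\alpha$-part gives, through the standard discrete summation by parts
\[
\sum_\sig \tau_\sig (c_{1,K}^n-c_{1,L}^n)(A_K^n-A_L^n) \;=\; \sum_K A_K^n \sum_{\sig\in\Ee_{K,\rm int}}\tau_\sig(c_{1,K}^n-c_{1,L}^n) \;=\; \sum_K m_K (A_K^n)^2,
\]
exactly the first term of \eqref{eq:entropy}. The $\kappa$-part equals $-2\kappa\sum_\sig\tau_\sig(c_{1,K}^n-c_{1,L}^n)(c_{1,K}^{n-1}-c_{1,L}^{n-1})$ and is bounded per time step by $2\kappa\,\cter{LinfH1}$ via Cauchy--Schwarz and Corollary~\ref{coro:LinfH1} applied at the two time levels, hence by $2\kappa T\,\cter{LinfH1}$ after summation in $n$.

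Assembling all these contributions yields \eqref{eq:entropy}. The $L^2$-bound \eqref{eq:dmu} is then a direct byproduct: squaring \eqref{eq:scheme_dmu} and applying $(a+b)^2\leq 2a^2+2b^2$ gives
\[
\sum_K m_K (\mu_{1,K}^n-\mu_{2,K}^n)^2 \;\leq\; 2\alpha^2 \sum_K m_K (A_K^n)^2 + 2\kappa^2|\Omega|,
\]
since $|1-2c_{1,K}^{n-1}|\leq 1$; multiplying by $\dt$, summing over $n$, and invoking \eqref{eq:entropy} closes the argument. The main obstacle is to use the tailored definition \eqref{eq:c_isig} at the right place --- its logarithmic-mean form is exactly what makes the discrete chain rule $c_{i,\sig}^n\,\Delta_\sig\log c_i^n = \Delta_\sig c_i^n$ sharp --- and to handle the fact that the repulsive $\kappa$-term in \eqref{eq:scheme_dmu} is discretized explicitly (at $t^{n-1}$) rather than implicitly: one cannot absorb it into $\alpha\sum_K m_K(A_K^n)^2$ through a Young-type inequality as in the continuous derivation, so the $L^\infty_t$-type bound of Corollary~\ref{coro:LinfH1} must be used instead.
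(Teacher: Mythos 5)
Your proposal is correct and follows essentially the same route as the paper: test \eqref{eq:scheme_ci} with $\dt\,\eta_i\log(c_{i,K}^n)$, use the logarithmic-mean identity from \eqref{eq:c_isig} as the discrete chain rule, bound the time term by convexity of $H$, the potential term by Cauchy--Schwarz with Corollary~\ref{coro:LinfH1} and \cite[Lemma 9.4]{EGH00}, extract $\sum_K m_K (A_K^n)^2$ from the $\alpha$-part of \eqref{eq:scheme_dmu} by summation by parts, and deduce \eqref{eq:dmu} by squaring \eqref{eq:scheme_dmu}. The only divergence is the $\kappa$ cross-term, which you bound in edge-difference form by Cauchy--Schwarz and Corollary~\ref{coro:LinfH1} at the two time levels, whereas the paper keeps it in cell form and absorbs it via $ab\geq -\frac{\a}{2\k}a^2-\frac{\k}{2\a}b^2$ into half of $\alpha\sum_K m_K(A_K^n)^2$; both are valid, but your closing remark that the explicit discretization at $t^{n-1}$ forbids the Young-type absorption is mistaken --- since $|1-2c_{1,K}^{n-1}|\leq 1$ holds at any time level, the absorption goes through exactly as in the continuous derivation, and this is in fact what the paper does.
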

\begin{proof}
Multiplying~\eqref{eq:scheme_ci} by $\dt \eta_i \log(c_{i,K}^n)$ and summing over $i \in \{1,2\}$, $n \in \{1,\dots, N\}$ and $K \in \Tt$ yields
\be\label{eq:FI_A1234}
A_1 + A_2 + A_3 + A_4 = 0, 
\ee
where we have set 
\begin{align}
A_1 = & \sum_{i \in \{1,2\}} \eta_i \sum_{n=1}^N \sum_{K\in\Tt} m_K (c_{i,K}^n - c_{i,K}^{n-1}) \log(c_{i,K}^n), \nn \\
A_2 = &  \sum_{i \in \{1,2\}}\sum_{n=1}^N \dt \sum_{\substack{\sig \in \Ee_{\rm int}\\\sig = K|L}} \tau_\sig c^n_{i,\sig} \nn 
\left(\mu_{i,K}^n - \mu_{i,L}^n\right)\left(\log(c_{i,K}^n) - \log(c_{i,L}^n)\right),\\
A_3 = & \sum_{i \in \{1,2\}}\sum_{n=1}^N \dt \sum_{\substack{\sig \in \Ee_{\rm int}\\\sig = K|L}} \tau_\sig c^n_{i,\sig} \nn
\left(\Psi_{i,K} - \Psi_{i,L}\right)\left(\log(c_{i,K}^n) - \log(c_{i,L}^n)\right), \\
A_4 = & \sum_{n=1}^N \dt \sum_{i\in\{1,2\}} \eta_i  \theta_{i,\Tt} \sum_{\substack{\sig \in \Ee_{\rm int}\\\sig = K|L}} \tau_\sig (c_{i,K}^n - c_{i,L}^n)(\log(c_{i,K}^n) - \log(c_{i,L}^n)).\label{eq:FI_A4}
\end{align}
It follows from the convexity of $H$ that 
\be\label{eq:FI_A1}
\begin{aligned}
 A_1 
 &\geq \sum_{i \in \{1,2\}} \eta_i \sum_{n=1}^N \sum_{K\in\Tt} m_K \left(H(c_{i,K}^n) - H(c_{i,K}^{n-1})\right) 
 = \sum_{i \in \{1,2\}} \eta_i \sum_{K\in\Tt} m_K \left(H(c_{i,K}^N) - H(c_{i,K}^{0})\right) \\
 &\geq - \sum_{i \in \{1,2\}} \eta_i \sum_{K\in\Tt} m_K  \geq -C.
 \end{aligned}.
\ee
The particular choice~\eqref{eq:c_isig} for $c_{i,\sig}^n$ was fixed so that 
\[
c_{i,\sig}^n\left(\log(c_{i,K}^n) - \log(c_{i,L}^n)\right) = c_{i,K}^n - c_{i,L}^n, \qquad n \geq 1, \; \sig = K|L.
\]
Therefore, using~\eqref{eq:scheme_c1+c2} and Cauchy-Schwarz inequality, we deduce that 
\begin{align}\label{eq:FI_A3}
A_3 = &\; \sum_{n=1}^N \dt \sum_{\substack{\sig \in \Ee_{\rm int}\\\sig = K|L}} \tau_\sig (c_{1,K}^n - c_{1,L}^n)(\Psi_{1,K} - \Psi_{1,L} - \Psi_{2,K} + \Psi_{2,L})  \\
\geq & \; -\sum_{n=1}^N \dt \left( \sum_{\substack{\sig \in \Ee_{\rm int}\\\sig = K|L}} \tau_\sig(c_{1,K}^n - c_{1,L}^n)^2\right)^{1/2} 
\sum_{i\in\{1,2\}} \left(\sum_{\substack{\sig \in \Ee_{\rm int}\\\sig = K|L}} \tau_\sig(\Psi_{i,K} - \Psi_{i,L})^2\right)^{1/2}\geq -C, \nonumber
\end{align}
where the last inequality is a consequence of Corollary~\ref{coro:LinfH1} and of estimate 
\be\label{eq:H1-Psi}
\sum_{\substack{\sig \in \Ee_{\rm int}\\\sig = K|L}} \tau_\sig(\Psi_{i,K} - \Psi_{i,L})^2\leq C, 
\ee
which itself is a consequence of \cite[Lemma 9.4]{EGH00} and of the $H^1(\O)$ regularity of the external potentials $\Psi_i$.
Similarly, one can rewrite
\begin{align*}
A_2 =&   \sum_{n=1}^N \dt \sum_{\substack{\sig \in \Ee_{\rm int}\\\sig = K|L}} \tau_\sig (c_{1,K}^n - c_{1,L}^n)(\mu_{1,K}^n - \mu_{2,K}^n - \mu_{1,L}^n + \mu_{2,L}^n), \\
= & \sum_{n=1}^N \dt \sum_{K\in\Tt} m_K \left(\frac1{m_K} \sum_{\substack{\sig \in \Ee_{K,\rm int}\\\sig = K|L}} \tau_\sig (c_{1,K}^n - c_{1,L}^n)\right)\left(\mu_{1,K}^n - \mu_{2,K}^n \right).
\end{align*}
Thanks to the relation~\eqref{eq:scheme_dmu}, it turns to 
\begin{multline*}
A_2 = \alpha \sum_{n=1}^N \dt \sum_{K\in\Tt} m_K \left(\frac1{m_K} \sum_{\substack{\sig \in \Ee_{K,\rm int}\\\sig = K|L}} \tau_\sig (c_{1,K}^n - c_{1,L}^n)\right)^2 \\
+ \kappa  \sum_{n=1}^N \dt \sum_{K\in\Tt} m_K \left(\frac1{m_K} \sum_{\substack{\sig \in \Ee_{K,\rm int}\\\sig = K|L}} \tau_\sig (c_{1,K}^n - c_{1,L}^n)\right) (1-2 c_{1,K}^{n-1}).
\end{multline*}
Using the fact that $0 \leq c_{1,K}^{n-1} \leq 1$ and the inequality $ab \geq - \frac{\a}{2\k} a^2 - \frac{\k}{2\a} b^2$, we obtain
\be\label{eq:FI_A2}
A_2 \geq  \frac{\alpha}2 \sum_{n=1}^N \dt \sum_{K\in\Tt} m_K \left(\frac1{m_K} \sum_{\substack{\sig \in \Ee_{K,\rm int}\\\sig = K|L}} \tau_\sig (c_{1,K}^n - c_{1,L}^n)\right)^2 - C. 
\ee
The combination of~\eqref{eq:FI_A4}--\eqref{eq:FI_A2} in~\eqref{eq:FI_A1234} provides~\eqref{eq:entropy}.
Let us now focus on estimate~\eqref{eq:dmu}. Equality~\eqref{eq:scheme_dmu} gives
\begin{align*}
 \sum_{n=1}^N \dt \sum_{K\in\Tt} m_K (\mu_{1,K}^n - \mu_{2,K}^n)^2
 \leq &2 \alpha^2 \sum_{n=1}^N \dt \sum_{K\in\Tt} m_K 
 \left( \frac1{m_K} \sum_{\substack{\sig \in \Ee_{K,\rm int}\\\sig = K|L}} \tau_\sig (c_{1,K}^n - c_{1,L}^n)\right)^2 \\
 &+ 2 \kappa^2  \sum_{n=1}^N \dt \sum_{K\in\Tt} m_K (1-2c_{1,K}^{n-1})^2.
\end{align*}
Since $0 \leq c_{1,K}^{n-1} \leq 1$ and the logarithmic function is increasing, estimate~\eqref{eq:entropy} concludes the proof.
\end{proof}

The following lemma is a transposition to the discrete setting of the weighted estimate~\eqref{eq:cont_mui_weight} on the chemical potentials.
\begin{lem}\label{lem:c-grad-mu} 
There exists $\ctel{c-grad-mu}$ depending only on $\a$, $\k$, $c_i^0$, $\Psi_i$, $T$, $\O$, $\eta_i$ and $\zeta$ such that 
\[
\sum_{n=1}^N \dt \sum_{i\in\{1,2\}} \sum_{\substack{\sig \in \Ee_{\rm int}\\\sig = K|L}} \tau_\sig c_{i,\sig}^n \left( \mu_{i,K}^n - \mu_{i,L}^n \right)^2 \leq \cter{c-grad-mu}. 
\]
\end{lem}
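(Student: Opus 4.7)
The plan is to extract the desired weighted bound from the already-established dissipation estimate of Lemma~\ref{lem:NRJ} and from the entropy estimate of Lemma~\ref{lem:entropy}. First I would sum the energy dissipation inequality of Lemma~\ref{lem:NRJ} over $n\in\{1,\dots,N\}$: since $H\geq 0$, $c_{1,K}^n c_{2,K}^n \geq 0$, and $\sum_K m_K c_{i,K}^n \Psi_{i,K}$ is controlled by $\|\Psi_i\|_{L^1(\O)}$ thanks to $0<c_{i,K}^n<1$ (Lemma~\ref{lem:positivity}), the discrete energy $\Eee_\Tt(\bc^n)$ is uniformly bounded below, while $\Eee_\Tt(\bc^0)$ is bounded above by the same computation as in the proof of Corollary~\ref{coro:LinfH1}. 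This yields
\[
\sum_{n=1}^N \dt\, \Ddd_\Tt(\bc^n,\bmu^n) \leq \Eee_\Tt(\bc^0) - \inf_n \Eee_\Tt(\bc^n) \leq C.
\]

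Next, setting $\delta\mu = \mu_{i,K}^n - \mu_{i,L}^n$, $\delta\Psi = \Psi_{i,K} - \Psi_{i,L}$, and $\delta L = \theta_{i,\Tt}\eta_i(\log c_{i,K}^n - \log c_{i,L}^n)$, I would apply the elementary inequality $(\delta\mu)^2 \leq 3\bigl[(\delta\mu+\delta\Psi+\delta L)^2 + (\delta\Psi)^2 + (\delta L)^2\bigr]$. Multiplying by $\tau_\sig c_{i,\sig}^n$ and summing over $\sig$, $i$, and $n$ with weight $\dt$, the contribution of $(\delta\mu+\delta\Psi+\delta L)^2$ is, up to the factor $\eta_i$, exactly the integrand of $\Ddd_\Tt(\bc^n,\bmu^n)$ and is therefore controlled by $(\max_i \eta_i)\sum_n \dt\,\Ddd_\Tt \leq C$.

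For the $(\delta\Psi)^2$ term, I would use that the logarithmic mean~\eqref{eq:c_isig} satisfies $c_{i,\sig}^n \leq \max(c_{i,K}^n,c_{i,L}^n) \leq 1$, so this contribution is bounded by $3T\sum_i \sum_\sig \tau_\sig(\Psi_{i,K}-\Psi_{i,L})^2$, which is finite by~\eqref{eq:H1-Psi}. For the $(\delta L)^2$ term, the crux of the argument is the key algebraic identity built into the definition~\eqref{eq:c_isig} of $c_{i,\sig}^n$, namely $c_{i,\sig}^n(\log c_{i,K}^n - \log c_{i,L}^n) = c_{i,K}^n - c_{i,L}^n$, which yields
\[
c_{i,\sig}^n (\log c_{i,K}^n - \log c_{i,L}^n)^2 = (c_{i,K}^n - c_{i,L}^n)(\log c_{i,K}^n - \log c_{i,L}^n).
\]
The $(\delta L)^2$ contribution therefore equals $3\sum_i \theta_{i,\Tt}\eta_i$ times exactly the second sum in the left-hand side of the entropy estimate~\eqref{eq:entropy}, hence is bounded by $3\sum_i \theta_{i,\Tt}\eta_i\, \cter{FI}$. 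Boundedness of $\theta_{i,\Tt} = \max(\theta_i,\rho h_\Tt)$ (assuming $h_\Tt$ is controlled) ensures the final constant is uniform.

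The main obstacle, which is more of a technical observation than a true difficulty, is to recognize that the logarithmic-mean choice~\eqref{eq:c_isig} is what makes the third term tractable: it turns a weighted $L^2$ sum of log-differences into precisely the quantity controlled by the entropy production~\eqref{eq:entropy}. Without this choice (e.g. with an upwind value), the argument would not close. Apart from that, the proof is a clean combination of splitting, Lemma~\ref{lem:NRJ}, Lemma~\ref{lem:entropy}, and~\eqref{eq:H1-Psi}.
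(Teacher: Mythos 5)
Your proof is correct and follows essentially the same route as the paper: split $(\mu_{i,K}^n-\mu_{i,L}^n)^2$ via $(a+b+c)^2\leq 3(a^2+b^2+c^2)$, bound the dissipation sum through the telescoping energy inequality of Lemma~\ref{lem:NRJ}, control the $\Psi$ term by~\eqref{eq:H1-Psi}, and reduce the logarithmic term to the entropy estimate~\eqref{eq:entropy} via the identity $c_{i,\sig}^n(\log c_{i,K}^n-\log c_{i,L}^n)=c_{i,K}^n-c_{i,L}^n$. Your explicit lower bound on $\Eee_\Tt(\bc^n)$ via $\|\Psi_i\|_{L^1(\O)}$ is in fact slightly more careful than the paper's shortcut $\Eee_\Tt(\bc^0)-\Eee_\Tt(\bc^N)\leq 2\Eee_\Tt(\bc^0)$.
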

\begin{proof}
Definition~\eqref{eq:Ddd_Tt} of $\Ddd_\Tt(\bc^n,\bmu^n)$ together with inequality $(a+b+c)^2 \leq 3(a^2+b^2+c^2)$ yield
\begin{multline*}
\frac1{\max_i{\eta_i}} \sum_{n=1}^N \dt \sum_{i\in\{1,2\}} \sum_{\substack{\sig \in \Ee_{\rm int}\\ \sig = K|L}} 
\tau_\sig c_{i,\sig}^n \left( \mu_{i,K}^n - \mu_{i,L}^n\right)^2 
\leq 3\sum_{n=1}^N \dt \; \Ddd_\Tt(\bc^n,\bmu^n) 
\\ + 3  \sum_{n=1}^N \dt \sum_{i\in\{1,2\}} \sum_{\substack{\sig \in \Ee_{\rm int}\\\sig = K|L}} \tau_\sig
\frac{c_{i,\sig}^n}{\eta_i} \left( \left(\Psi_{i,K}-\Psi_{i,L}\right)^2 + (\theta_{i,\Tt} \eta_i)^2 \left(\log(c_{i,K}^n) - \log(c_{i,L}^n)\right)^2\right).
\end{multline*}
Owing to Lemma~\ref{lem:NRJ}, the first term of the right-hand side is bounded by 
\[
\sum_{n=1}^N \dt \; \Ddd_\Tt(\bc^n,\bmu^n) \leq \Eee_{\Tt}(\bc^0) -\Eee_{\Tt}(\bc^N) \leq 2\Eee_\Tt(\bc^0), 
\]
which is bounded as already seen in the proof of Corollary~\ref{coro:LinfH1}.
On the other hand, since $0 \leq c_{i,\sig}^n \leq 1$, one has 
\[
\sum_{n=1}^N \dt \sum_{i\in\{1,2\}} \sum_{\substack{\sig \in \Ee_{\rm int}\\\sig = K|L}} 
\tau_\sig \frac{c_{i,\sig}^n}{\eta_i} \left(\Psi_{i,K}-\Psi_{i,L}\right)^2 \leq T \cter{EGH00} \sum_{i\in\{1,2\}} \frac{1}{\eta_i} \|\Psi_i\|_{H^1(\O)}.
\]
Finally, 
\[
 \sum_{n=1}^N \dt \sum_{i\in\{1,2\}} \eta_i \sum_{\substack{\sig \in \Ee_{\rm int}\\\sig = K|L}} \tau_\sig
 {c_{i,\sig}^n} (\theta_{i,\Tt})^2 \left(\log(c_{i,K}^n) - \log(c_{i,L}^n)\right)^2 \leq \cter{FI}\max_i{\theta_{i,\Tt}}
\]
thanks to Lemma~\ref{lem:entropy}.
\end{proof}

Relation~\eqref{eq:scheme_c1+c2} guarantees that the sum of the volume fractions is constant equal to $1$ in the 
cells. But this is no longer true on the edges. As shown in the following lemma, the sum of the edge volume fractions 
is always lower or equal to $1$. Assume for instance that for some $\sig = K|L$, $c_{1,K}^n = 1$ and $c_{1,L}^n = 0$, 
then both $c_{1,\sig}^n$ and $c_{2,\sig}^n$ are equal to $0$. This degeneracy may lead to severe difficulties in the 
effective resolution of the nonlinear system provided by the scheme. Next lemma shows that this 
situation can not be encountered provided the time step 
is large enough with respect to the size of the mesh. The estimate we provide is based on the worst case scenario 
and is thus extremely pessimistic. Practically, the inverse CFL condition \eqref{eq:CFL_nondeg} is not needed as soon as the ratio $\a/\k$ 
is large enough with respect to the size of the discretization. 

\begin{lem}\label{lem:c1+c2}
Assume that there exists $\gamma>1$ such that 
\be\label{eq:CFL_nondeg}
\frac{\dt}{m_K} \geq  \gamma \frac{\cter{FI}}{\tau_\star^2},  \qquad \forall K \in \Tt, 
\ee
then there exists $\delta \in (0,1)$ depending on $\tau_\star$, $\tau^\star$, $\ell^\star$ and $\gamma$ such that 
\be\label{eq:nondeg_1}
|c_{i,K}^n - c_{i,L}^n| \leq 1-\delta, \qquad \forall \sig = K|L \in \Ee_{\rm int}, \; \forall n \in \{1,\dots, N\}, \; \forall  i \in \{1,2\}.
\ee
As a consequence, there exists $c_\star>0$ depending only on $\delta$ such that 
\be\label{eq:nondeg_2}
1 \geq c_{1,\sig}^n + c_{2,\sig}^n \geq c_\star, \qquad \forall \sig = K|L \in \Ee_{\rm int}, \; \forall n \in \{1,\dots,N\}.
\ee
\end{lem}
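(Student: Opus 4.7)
The plan is to turn the Fisher-information estimate \eqref{eq:entropy} of Lemma~\ref{lem:entropy} into a uniform pointwise bound on the discrete ``divergence-like'' quantity that appears in \eqref{eq:scheme_dmu}. Since every summand of the first term in \eqref{eq:entropy} is nonnegative, for each fixed $(n, K)$ one reads off
\[
\dt\, m_K \Biggl( \frac{1}{m_K} \sum_{\substack{\sig \in \Ee_{K,\rm int}\\ \sig = K|L}} \tau_\sig (c_{1,K}^n - c_{1,L}^n) \Biggr)^{\!2} \leq \cter{FI}.
\]
I then multiply by $m_K$ and invoke the inverse CFL condition \eqref{eq:CFL_nondeg} to deduce the pointwise control
\[
\Biggl| \sum_{\substack{\sig \in \Ee_{K,\rm int}\\ \sig = K|L}} \tau_\sig (c_{1,K}^n - c_{1,L}^n) \Biggr| \leq \frac{\tau_\star}{\sqrt{\gamma}} < \tau_\star,
\]
the \emph{strict} inequality (owing to $\gamma > 1$) providing the slack that will prevent any single edge-jump from reaching $1$.

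To obtain \eqref{eq:nondeg_1} for $i = 1$, I would set
\[
\delta := \frac{\tau_\star(1 - \gamma^{-1/2})}{\tau_\star + (\ell^\star - 1)\,\tau^\star} \in (0,1)
\]
and argue by contradiction: assume some edge $\sig_0 = K_0|L_0$ satisfies (after relabeling if necessary) $c_{1,K_0}^n - c_{1,L_0}^n > 1 - \delta$. Then $c_{1,K_0}^n > 1 - \delta$, so $c_{1,K_0}^n - c_{1,L}^n > -\delta$ for every other neighbor $L$ of $K_0$. Splitting the sum in $K_0$ into the contribution of $\sig_0$ (at least $\tau_\star(1-\delta)$) and the at most $\ell^\star - 1$ remaining terms (each at least $-\tau^\star \delta$) yields
\[
\sum_{\sig = K_0|L} \tau_\sig (c_{1,K_0}^n - c_{1,L}^n) > \tau_\star(1-\delta) - (\ell^\star - 1)\,\tau^\star \delta = \frac{\tau_\star}{\sqrt{\gamma}},
\]
where the last equality is built into the definition of $\delta$. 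This contradicts the pointwise bound, settling \eqref{eq:nondeg_1} for $i=1$; the case $i=2$ follows from \eqref{eq:scheme_c1+c2}.

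For \eqref{eq:nondeg_2} I would use the classical bracketing $\sqrt{ab} \leq \mathrm{logmean}(a,b) \leq (a+b)/2$. The upper bound $c_{1,\sig}^n + c_{2,\sig}^n \leq 1$ drops out of the arithmetic-mean side combined with \eqref{eq:scheme_c1+c2}. The geometric-mean side together with \eqref{eq:scheme_c1+c2} reduce the lower bound to minimizing
\[
g(x,y) := \sqrt{xy} + \sqrt{(1-x)(1-y)}
\]
over $\{(x,y)\in [0,1]^2 : |x-y| \leq 1 - \delta\}$. A short calculation of $\partial_x g$ shows that $g$ is non-increasing in $x$ whenever $x \geq y$, so by symmetry the infimum sits on the boundary $|x-y| = 1 - \delta$; a one-variable study along that segment shows it is attained at the corners $(1,\delta)$ or $(1-\delta,0)$, with value $\sqrt\delta$. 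Hence $c_\star := \sqrt\delta$ does the job.

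The main obstacle lies in Step~1: the precise scaling in \eqref{eq:CFL_nondeg} (linear in $m_K/\dt$, quadratic in $\tau_\star^{-1}$, with constant $\gamma$ strictly larger than $1$) is exactly what is needed to push the pointwise divergence bound below $\tau_\star$. Everything afterwards---the neighbor-counting in Step~2 and the two-variable minimization in Step~3---is elementary bookkeeping.
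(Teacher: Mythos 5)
Your proof is correct, and its first and main part (the pointwise bound on $\sum_\sig \tau_\sig(c_{1,K}^n-c_{1,L}^n)$ extracted from a single nonnegative summand of \eqref{eq:entropy}, combined with the inverse CFL condition and the neighbour-counting argument with the explicit $\delta = \tau_\star(1-\gamma^{-1/2})/\bigl(\tau_\star+(\ell^\star-1)\tau^\star\bigr)$) is essentially identical to the paper's, down to the same value of $\delta$. Where you genuinely diverge is in the deduction of \eqref{eq:nondeg_2} from \eqref{eq:nondeg_1}: the paper introduces the function $\varphi(a,b)=(a-b)\bigl[\tfrac1{\log(a/b)}-\tfrac1{\log((1-a)/(1-b))}\bigr]$, observes that it is continuous on $[0,1]^2$ and vanishes only at $\{a,b\}=\{0,1\}$, and obtains $c_\star$ by compactness of $\Kk_\delta=\{|a-b|\le 1-\delta\}$ — a soft argument yielding a non-explicit constant. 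You instead bound the logarithmic mean from below by the geometric mean and minimize $\sqrt{xy}+\sqrt{(1-x)(1-y)}$ over $\Kk_\delta$, obtaining the explicit value $c_\star=\sqrt\delta$ attained at the corners $(1,\delta)$ and $(1-\delta,0)$; your monotonicity claim $\partial_x g\le 0$ for $x\ge y$ and the boundary minimization both check out (the key inequality reduces to $(y+1-\delta)(1-y)\ge y(\delta-y)$, i.e.\ $1-\delta\ge 0$). Your route is slightly more computational but buys a quantitative constant, which could be useful in practice, e.g.\ for tracking how the non-degeneracy threshold in the Newton solver depends on $\gamma$; note only that both the upper bound and the geometric-mean lower bound require $0<c_{i,K}^n<1$, which is guaranteed by Lemma~\ref{lem:positivity} and is worth citing explicitly since the definition \eqref{eq:c_isig} sets $c_{i,\sig}^n=0$ when a cell value is nonpositive.
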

\begin{proof}
Let us first establish~\eqref{eq:nondeg_1}. As a consequence of Lemma~\ref{lem:entropy}, there holds
\be\label{eq:nondeg_3}
\left(\sum_{\substack{\sig \in \Ee_{K,\rm int},\\\sig = K|L}} \tau_\sig (c_{i,K}^n - c_{i,L}^n)\right)^2 \leq \frac{\cter{FI}m_K}{\dt}, \qquad \forall K \in \Tt, \; \forall n \in \{1,\dots, N\}, \; \forall  i \in \{1,2\}.
\ee
Let $\sig = K|L\in \Ee_{\rm int}$ such that $c_{i,K}^n - c_{i,L}^n \geq 1-\delta$, then in particular $c_{i,K}^n \geq 1-\delta$, so that 
$c_{i,K}^n - c_{i,M}^n \geq -\delta$ for all $M \in \Ee_{K,\rm int}$. Plugging it in~\eqref{eq:nondeg_3} and using~\eqref{eq:tau_star} yields
\[
\left(- \tau^\star (\#\Ee_K-1) \delta + \tau_\star (1-\delta)\right)^2 \leq \frac{\cter{FI}m_K}{\dt}.
\]
For $\delta \leq \frac{{\tau_\star}}{(\ell^\star-1)\tau^\star + \tau_\star}$, this yields 
\[
\delta 
\geq 
\frac{\tau_\star - \sqrt{\frac{\cter{FI} \max_{K\in\Tt} m_K}{\dt}}}{(\ell^\star-1)\tau^\star + \tau_\star} 
\geq 
\frac{\tau_\star{\left(1-\frac1{\sqrt\gamma}\right)}}{(\ell^\star-1)\tau^\star + \tau_\star}
\]
thanks to~\eqref{eq:CFL_nondeg}. Thus~\eqref{eq:nondeg_1}  holds with 
\[
\delta = \min\left\{\frac{\tau_\star}{(\ell^\star-1)\tau^\star + \tau_\star}, \frac{\tau_\star{\left(1-\frac1{\sqrt\gamma}\right)}}{((\ell^\star-1)\tau^\star + \tau_\star)}\right\} \in (0,1).
\]

\smallskip

Let us now turn to the proof of~\eqref{eq:nondeg_2}.
{If $c_{i,\sig}^n = c_{i,K}^n= c_{i,L}^n$, we have immediately $c_{1,\sig}^n + c_{2,\sig}^n = 1$. Otherwise,
t}he inequality $c_{1,\sig}^n + c_{2,\sig}^n\leq 1$ follows directly from the fact that the logarithmic mean 
is smaller than the arithmetic one.
Define the continuous function $\varphi:[0,1]^2\to [0,1]$ by 
\[
\varphi(a,b) = 
(a-b) \left[ \frac{1}{\log(a/b)} - \frac{1}{\log((1-a)/(1-b))}\right] \quad\text{if}\; a \neq b, \quad \text{and}\quad  \varphi(a,a) = 1, 
\]
so that, in view of~\eqref{eq:c_isig} and
\eqref{eq:scheme_c1+c2}, one has 
\be\label{eq:c1+c2_varphi}
c_{1,\sig}^n + c_{2,\sig}^n = \varphi(c_{1,K}^n, c_{1,L}^n), \qquad \forall \sig = K|L \in \Ee_{\rm int}, \; \forall n \in \{1,\dots,N\}.
\ee
Note that $\varphi(a,b) = 0$ if and only if $\{a,b\} = \{0,1\}$.
In particular, $\varphi$ is positive on the compact set 
\[
\Kk_\delta = \left\{ (a,b) \in [0,1]^2\;  \middle| \; |a-b|\leq 1-\delta  \right\}.
\]
Thus it remains bounded away from $0$ by some $c_\star$ depending only on $\delta$.
Then~\eqref{eq:nondeg_2} follows from~\eqref{eq:nondeg_1} and \eqref{eq:c1+c2_varphi}.
\end{proof}

With Lemma~\ref{lem:c1+c2} at hand, we are in position to prove our next lemma, whose goal 
is to provide first a $L^2((0,T);BV(\O))$ estimate on the approximate mean chemical potential $\ov \mu_{\Tt,\dt}$, and then a 
non-weighted $L^2(Q_T)$ estimates on the chemical potentials. 

\begin{lem}\label{lem:mu}
Provided~\eqref{eq:nondeg_2} holds, there exists $\ctel{BV_ov-mu}$ and $\ctel{L2mu}$ depending only on $\a$, $\k$, $c_{i}^0$, $\eta_i$, $\Psi_i$, $\theta_i$, $T$, $\O$, $\zeta$, $c^\star$ such that 
\be\label{eq:BV_ov-mu}
\sum_{n=1}^N \dt \left(\sum_{\substack{\sig \in \Ee_{\rm int}\\\sig = K|L}} m_\sig \left| \ov \mu_{K}^n -  \ov \mu_{L}^n\right|\right)^2 \leq \cter{BV_ov-mu}.
\ee
and
\be\label{eq:L2_mu_i}
\sum_{n=1}^N \dt \sum_{K\in\Tt} m_K \left(\mu_{i,K}^n\right)^2 \leq \cter{L2mu}, \qquad i\in\{1,2\}.
\ee
\end{lem}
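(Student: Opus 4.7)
The plan is to mimic at the discrete level the continuous argument from Section~\ref{ssec:cont_estimates}: first establish the $L^2$-in-time, BV-in-space bound~\eqref{eq:BV_ov-mu} on $\ov\mu_\Tt^n$, then combine it with the zero-mean condition~\eqref{eq:ov-mu_K} and a discrete Sobolev--Poincar\'e inequality in dimension two (in the spirit of~\cite{EGH00}) to obtain an $L^2(Q_T)$ control of $\ov\mu_{\Tt,\dt}$, and finally recover~\eqref{eq:L2_mu_i} using the pointwise identity $\mu_{i,K}^n = \ov\mu_K^n \mp c_{\bar\imath,K}^n(\mu_{1,K}^n-\mu_{2,K}^n)$ together with~\eqref{eq:dmu}.

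The central step is a discrete counterpart of the continuous identity $\grad\ov\mu = (\mu_1-\mu_2)\grad c_1 + \sum_i c_i\grad\mu_i$. Setting $\delta_K^n:=\mu_{1,K}^n-\mu_{2,K}^n$ and using $\mu_{1,K}^n = \ov\mu_K^n + c_{2,K}^n\delta_K^n$, $\mu_{2,K}^n = \ov\mu_K^n - c_{1,K}^n\delta_K^n$, I multiply by $c_{i,\sig}^n$ and sum over $i$ to get
\[
S_\sig^n(\ov\mu_K^n-\ov\mu_L^n) = \sum_{i\in\{1,2\}} c_{i,\sig}^n(\mu_{i,K}^n-\mu_{i,L}^n) + \alpha_K^n\delta_K^n - \alpha_L^n\delta_L^n,
\]
where $S_\sig^n := c_{1,\sig}^n+c_{2,\sig}^n$ and $\alpha_K^n := c_{1,K}^n c_{2,\sig}^n - c_{2,K}^n c_{1,\sig}^n$. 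Two algebraic facts drive the rest: first, $\alpha_K^n - \alpha_L^n = S_\sig^n(c_{1,K}^n-c_{1,L}^n)$ allows the symmetric rewrite $\alpha_K^n\delta_K^n-\alpha_L^n\delta_L^n = \tfrac{\alpha_K^n+\alpha_L^n}{2}(\delta_K^n-\delta_L^n) + \tfrac{\delta_K^n+\delta_L^n}{2}S_\sig^n(c_{1,K}^n-c_{1,L}^n)$; second, the decomposition $\alpha_K^n = c_{1,K}^n(c_{2,\sig}^n - c_{2,K}^n) + (1-c_{1,K}^n)(c_{1,K}^n - c_{1,\sig}^n)$, combined with the fact that each logarithmic mean $c_{i,\sig}^n$ lies between $\min$ and $\max$ of its two arguments, yields $|\alpha_K^n|\leq|c_{1,K}^n-c_{1,L}^n|$ (and likewise for $\alpha_L^n$). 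Dividing through by $S_\sig^n$, which Lemma~\ref{lem:c1+c2} keeps above $c_\star$, provides a pointwise bound on $|\ov\mu_K^n-\ov\mu_L^n|$ in which every term carries either a $c_{i,\sig}^n$ weight or a $|c_{1,K}^n-c_{1,L}^n|$ weight.

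Summing over $\sig$ and applying Cauchy--Schwarz tailored to each contribution then produces~\eqref{eq:BV_ov-mu}. For the dissipation-weighted piece I pair $m_\sig c_{i,\sig}^n$ with the weight $\tau_\sig c_{i,\sig}^n$; the remaining geometric factor $\sum_\sig m_\sig^2 c_{i,\sig}^n/\tau_\sig = \sum_\sig 2m_{D_\sig}c_{i,\sig}^n$ is bounded by $2|\O|$, and time-integration invokes Lemma~\ref{lem:c-grad-mu}. For each of the two pieces carrying $|c_{1,K}^n-c_{1,L}^n|$, I pair with the weight $\tau_\sig$ so that Corollary~\ref{coro:LinfH1} absorbs the $c_1$-differences, while the companion factor reduces by shape regularity~\eqref{eq:reg1}--\eqref{eq:Ciarlet} to $\sum_\sig m_{D_\sig}\bigl((\delta_K^n)^2+(\delta_L^n)^2\bigr)\leq C\sum_K m_K(\delta_K^n)^2$, controlled by~\eqref{eq:dmu} after summation in $n$. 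Once~\eqref{eq:BV_ov-mu} is in hand, the discrete Sobolev--Poincar\'e inequality applied to the zero-mean cell function $\ov\mu_\Tt^n$ turns it into $\sum_n\dt\sum_K m_K(\ov\mu_K^n)^2\leq C$, and the decomposition $\mu_{i,K}^n = \ov\mu_K^n\mp c_{\bar\imath,K}^n\delta_K^n$ together with $0\leq c_{\bar\imath,K}^n\leq 1$ and~\eqref{eq:dmu} closes~\eqref{eq:L2_mu_i}. The main obstacle is the algebraic decomposition itself: without the lower bound $S_\sig^n\geq c_\star$ from Lemma~\ref{lem:c1+c2} and the log-mean estimate $|\alpha_K^n|\leq|c_{1,K}^n-c_{1,L}^n|$, any naive bound of the cross term by $|\delta_K^n|+|\delta_L^n|$ would feed a $\sum_K|\delta_K^n|P_K$-type contribution whose perimeter-to-area ratio $P_K/m_K\sim 1/h_K$ precludes a uniform estimate as the mesh is refined.
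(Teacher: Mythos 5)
Your proposal is correct and follows essentially the same route as the paper's proof: a BV-in-space, $L^2$-in-time bound on $\ov\mu$ obtained by splitting $\ov\mu_K^n-\ov\mu_L^n$ into a piece weighted by $c_{i,\sig}^n$ (controlled via Lemma~\ref{lem:c-grad-mu}) and a piece carrying $|c_{1,K}^n-c_{1,L}^n|\,|\mu_{1}^n-\mu_{2}^n|$ (controlled via Corollary~\ref{coro:LinfH1}, the mesh regularity and~\eqref{eq:dmu}), with the lower bound $c_{1,\sig}^n+c_{2,\sig}^n\geq c_\star$ from Lemma~\ref{lem:c1+c2} playing the same pivotal role, followed by the discrete Sobolev--Poincar\'e inequality and the identity $\mu_{i,K}^n=\ov\mu_K^n\mp c_{\bar\imath,K}^n(\mu_{1,K}^n-\mu_{2,K}^n)$. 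The only (harmless) difference is bookkeeping: you use an exact symmetric identity with log-mean weights and the commutator terms $\alpha_K^n\delta_K^n-\alpha_L^n\delta_L^n$, whereas the paper uses a one-sided triangle inequality with cell weights $c_{i,L}^n$ and the comparison $c_{i,L}^n\leq \tfrac{2}{c^\star}c_{i,\sig}^n$.
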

\begin{proof}
Let $n\geq 1$ and $\sig = K|L \in \Ee_{\rm int}$, then thanks to~\eqref{eq:nondeg_2}, either $c_{1,\sig}^n\geq \frac{c^\star}2$ or $c_{2,\sig}^n\geq \frac{c^\star}2$.
Let us assume that $c_{1,\sig}^n\geq \frac{c^\star}2$, the other case being similar. We can also 
assume without loss of generality that $c_{2,K}^n \geq c_{2,\sig}^n \geq c_{2,L}^n$. Then the triangle inequality ensures that 
\[
|\ov \mu_K^n - \ov \mu_L^n| \leq \sum_{i\in\{1,2\}} c_{i,L}^n |\mu_{i,K}^n - \mu_{i,L}^n| + \left| \sum_{i\in\{1,2\}} \mu_{i,K}^n (c_{i,K}^n - c_{i,L}^n)\right|.
\]
Using relation~\eqref{eq:scheme_c1+c2}, the second term of the right-hand side rewrites 
\[
 \left| \sum_{i\in\{1,2\}} \mu_{i,K}^n (c_{i,K}^n - c_{i,L}^n)\right| = \left| c_{1,K}^n - c_{1,L}^n \right| \left| \mu_{1,K}^n - \mu_{2,K}^n\right|, 
\]
while since $c_{2,\sig}^n \geq c_{2,L}^n$ and $c_{1,L}^n \leq 1 \leq \frac{2c_{1,\sig}^n}{c^\star}$, the first term can be estimated by 
\[
 \sum_{i\in\{1,2\}} c_{i,L}^n |\mu_{i,K}^n - \mu_{i,L}^n| \leq \frac2{c^\star}  \sum_{i\in\{1,2\}} c_{i,\sig}^n |\mu_{i,K}^n - \mu_{i,L}^n|.
\]
Therefore, using $(a+b+c)^2 \leq 3(a^2+b^2+c^2)$, we get that 
\be\label{eq:ov-mu_0}
\left( \sum_{\substack{\sig \in \Ee_{\rm int}\\\sig = K|L}} m_\sig |\ov \mu_K^n - \ov \mu_L^n|\right)^2 \leq A+B,
\ee
where we have set 
\begin{align*}
A= & \;
\frac{12}{(c^\star)^2} \sum_{i\in\{1,2\}}\left(\sum_{\substack{\sig \in \Ee_{\rm int}\\\sig = K|L}} m_\sig c_{i,\sig}^n |\mu_{i,K}^n - \mu_{i,L}^n|\right)^2, \\
B=& \; 3 \left(  \sum_{\substack{\sig \in \Ee_{\rm int}\\\sig = K|L}} m_\sig |c_{1,K}^n - c_{1,L}^n| \left| \mu_{1,K}^n - \mu_{2,K}^n\right|\right)^2.
\end{align*}
Using Cauchy-Schwarz inequality, we get that 
\[
A \leq \frac{12}{(c^\star)^2} \sum_{i\in\{1,2\}}\left( \sum_{\substack{\sig \in \Ee_{\rm int}\\\sig = K|L}}\tau_\sig c_{i,\sig}^n (\mu_{i,K}^n - \mu_{i,L}^n)^2\right)
\left( \sum_{\substack{\sig \in \Ee_{\rm int}\\\sig = K|L}} m_\sig d_\sig c_{i,\sig}^n \right).
\]
We deduce from $0 \leq  c_{i,\sig}^n \leq 1$, from $m_\sig d_\sig = 2 m_{D_\sig}$ and from Lemma~\ref{lem:c-grad-mu} that 
\be\label{eq:ov-mu_A}
\sum_{n=1}^N \dt \; A \leq \frac{24 |\O|}{(c^\star)^2} \cter{c-grad-mu}.
\ee
Besides, Cauchy-Schwarz inequality yields 
\[
B \leq 3 \left(  \sum_{\substack{\sig \in \Ee_{\rm int}\\\sig = K|L}} \tau_\sig \left(c_{1,K}^n - c_{1,L}^n \right)^2 \right) 
 \left(  \sum_{\substack{\sig \in \Ee_{\rm int}\\\sig = K|L}} m_\sig d_\sig \left(\left( \mu_{1,K}^n - \mu_{2,K}^n \right)^2 
 + \left( \mu_{1,L}^n - \mu_{2,L}^n \right)^2\right) \right).
\]
The first term in the right hand side is bounded uniformly w.r.t.~$n$ thanks to Corollary~\ref{coro:LinfH1}. Reorganizing the second term, one gets that 
\[
B \leq 6 \cter{LinfH1} \sum_{K\in\Tt} \left(\sum_{\sig\in\Ee_{K,\rm int}} m_{D_\sig}\right) \left( \mu_{1,K}^n - \mu_{2,K}^n \right)^2.
\]
Thanks to assumption~\eqref{eq:reg1} on the regularity of the mesh, one has $\sum_{\sig\in\Ee_{K,\rm int}} m_{D_\sig} \leq \frac{m_K}{\zeta}$. Therefore, it follows from 
Lemma~\ref{lem:entropy} that 
\be\label{eq:ov-mu_B}
\sum_{n=1}^N \dt \; B \leq \frac6{\zeta} \cter{LinfH1}\cter{dmu}.
\ee
Combining~\eqref{eq:ov-mu_A}--\eqref{eq:ov-mu_B} in \eqref{eq:ov-mu_0} provides \eqref{eq:BV_ov-mu}.

The combination of the $L^2((0,T);BV(\O))$ estimate~\eqref{eq:BV_ov-mu} with the zero mean condition~\eqref{eq:ov-mu_K} 
allows to make use of a Poincar\'e-Sobolev inequality (see for instance \cite{GG10}, \cite[Lemma 5.1]{EGH10} or \cite{BCCHF15}). This 
provides the following uniform $L^2(Q_T)$ estimate on the discrete global chemical potential (recall here that $\O\subset \R^2$):
\be\label{eq:ov-mu_L2}
\sum_{n=1}^N \dt \sum_{K\in\Tt} m_K \left(\ov \mu_K^n\right)^2 \leq C. 
\ee
The definition~\eqref{eq:ov-mu_K} of $\ov \mu_{K}^n$ and the equation~\eqref{eq:scheme_c1+c2} provide the 
following relations:
\[
\mu_{1,K}^n = \ov \mu_K^n + c_{2,K}^n (\mu_{1,K}^n - \mu_{2,K}^n), 
\qquad 
\mu_{2,K}^n = \ov \mu_K^n - c_{1,K}^n (\mu_{1,K}^n - \mu_{2,K}^n).
\]
As a result of Lemma~\ref{lem:positivity}, Lemma~\ref{lem:entropy} and Estimate~\eqref{eq:ov-mu_L2}, we recover~\eqref{eq:L2_mu_i}.
\end{proof}

\subsection{Existence of a discrete solution}\label{ssec:existence}
We are now in position to finish the proof of Theorem~\ref{main:1} by showing the existence of (at least) one discrete solution 
to the scheme~\eqref{eq:scheme_ci}--\eqref{eq:ov-mu_K}.

\begin{prop}\label{prop:existence}
There exists at least one solution to the scheme \eqref{eq:scheme_ci}--\eqref{eq:ov-mu_K} satisfying the a priori estimates 
established in Section~\ref{ssec:a_priori}.
\end{prop}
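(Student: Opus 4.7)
The proof goes by induction on $n$, the case $n=0$ being trivial from~\eqref{eq:ciK0}. Assuming a solution exists at time $t^{n-1}$, I would first use the constraint~\eqref{eq:scheme_c1+c2} to eliminate $c_{2,K}^n$ and gather the remaining degrees of freedom $\bX=\left((c_{1,K}^n)_K, (\mu_{1,K}^n)_K, (\mu_{2,K}^n)_K\right) \in \R^{N_\Tt}$, so that the scheme~\eqref{eq:scheme_ci}--\eqref{eq:ov-mu_K} rewrites as $F(\bX)=0$ for some continuous map $F:\R^{N_\Tt}\to\R^{N_\Tt}$. Continuity on the full space is legitimate because the edge values~\eqref{eq:c_isig} have been defined as a continuous extension of the logarithmic mean. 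Existence of a zero of $F$ is then established by a Brouwer topological degree argument in finite dimension.

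The core ingredient is a continuous homotopy $F_\lambda$, $\lambda\in[0,1]$, with $F_1=F$ and $F_0$ a simpler, explicitly solvable system. A convenient choice is to multiply by $\lambda$ the drift parts of~\eqref{eq:scheme_ci} involving $\Psi_i$ and $\mu_i$, the right-hand side of~\eqref{eq:scheme_dmu}, and the $c$-weighting in~\eqref{eq:ov-mu_K} (replacing the latter at $\lambda=0$ by an unweighted zero-mean condition on each $\mu_{i,K}^n$). At $\lambda=0$, the $c_i$ equations decouple into linear heat-type equations with a unique strictly positive solution (whose positivity stems from the thermal diffusion, as in Lemma~\ref{lem:positivity}), while the $\mu_i$ equations become a uniquely solvable linear system. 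All the a priori estimates of Section~\ref{ssec:a_priori} can then be repeated on zeros of $F_\lambda$ with constants uniform in $\lambda\in[0,1]$: the positivity argument uses only the (unscaled) thermal diffusion term, and the energy/entropy computations of Lemmas~\ref{lem:NRJ}--\ref{lem:mu} remain structurally valid provided one inserts the corresponding $\lambda$-factors in the test quantities.

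These uniform bounds supply a bounded open set $\Oo\subset\R^{N_\Tt}$ containing every zero of $F_\lambda$ and whose closure stays away from the singular hypersurfaces $\{c_{1,K}^n\in\{0,1\}\}$. By homotopy invariance of the Brouwer degree, $\deg(F_1,\Oo,0)=\deg(F_0,\Oo,0)$, and the latter equals $\pm 1$ since $F_0$ is an invertible affine map whose unique zero lies in $\Oo$. Hence $F$ has at least one zero in $\Oo$, which provides the desired discrete solution. The delicate point is the design of the homotopy: it must be simple enough for $F_0$ to be explicitly solvable with strict positivity, yet rich enough that the cancellations driving the energy and entropy identities, as well as the Poincar\'e--Sobolev step at the core of Lemma~\ref{lem:mu}, survive uniformly in $\lambda$, and that the inverse CFL bound of Lemma~\ref{lem:c1+c2} keeps the edge quantities $c_{1,\sig}^n+c_{2,\sig}^n$ bounded away from $0$ along the whole path.
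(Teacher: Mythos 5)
You correctly identify the paper's overall strategy: a topological degree argument along a homotopy joining an explicitly solvable problem to the nonlinear scheme, validated by $\lambda$-uniform a priori bounds. However, the specific homotopy you propose --- multiplying by $\lambda$ the drift terms in~\eqref{eq:scheme_ci}, the right-hand side of~\eqref{eq:scheme_dmu} and the $c$-weighting in~\eqref{eq:ov-mu_K} --- does not work, and the claim that ``all the a priori estimates can be repeated with constants uniform in $\lambda$'' is precisely where it breaks. First, your $F_0$ is not an invertible affine map: at $\lambda=0$ the chemical potentials appear only through $\mu_{1,K}-\mu_{2,K}=0$ and two scalar mean conditions, i.e.\ $\#\Tt+2$ equations for $2\#\Tt$ unknowns, so the $\mu$-block is under-determined; moreover the two decoupled heat equations for $c_1$ and for $c_2=1-c_1$ carry diffusivities $\theta_{1,\Tt}$ and $\theta_{2,\Tt}$ and are generically inconsistent with the constraint~\eqref{eq:scheme_c1+c2}. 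Second, and more structurally, if the $\mu$-flux is scaled by $\lambda$, testing the scheme with $\mu_{i,K}+\Psi_{i,K}+\eta_i\theta_{i,\Tt}\log(c_{i,K})$ only yields $\lambda\,\Ddd_\Tt\leq C$, so the $L^2$ control of $\bmu^\lambda$ obtained through Lemmas~\ref{lem:c-grad-mu}--\ref{lem:mu} degenerates like $1/\lambda$ as $\lambda\to 0$: the zero sets of $F_\lambda$ are not contained in a fixed bounded open set, and homotopy invariance of the degree cannot be invoked. Finally, no quantitative, $\lambda$-uniform lower bound on $c_{i,K}^\lambda$ is available from the positivity argument of Lemma~\ref{lem:positivity} (it is purely qualitative), so the claim that $\ov\Oo$ stays away from $\{c_{1,K}^n\in\{0,1\}\}$ is unsupported.

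The paper's homotopy is designed in the opposite spirit: the elliptic coupling in $\mu$ is kept at full strength for every $\lambda$, and instead the mobility is replaced by the truncation $f_\lambda(c)=\min\bigl(\frac{1+\lambda}{2},\max\bigl(\frac{1-\lambda}{2},c\bigr)\bigr)$, which is bounded below by $\frac{1-\lambda}{2}>0$ for $\lambda<1$, together with the globally Lipschitz ``pressure'' $p_\lambda$ replacing the logarithm (so no positivity is needed to define the system for $\lambda<1$), and an extra coercive term $(1-\lambda)\bigl(c_{1,K}^\lambda-\frac12\bigr)$ in~\eqref{eq:lambda3} that compensates, via the technical Lemma~\ref{lem:g_lambda}, the loss of entropy coercivity. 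The only degeneration then occurs as $\lambda\to 1$, where the control of $\bmu^\lambda$ through the dissipation is lost; this is recovered by reproducing the entropy estimate of Lemma~\ref{lem:entropy} (using that $f_{i,\sig}^\lambda\bigl(p_\lambda(c_{i,K}^\lambda)-p_\lambda(c_{i,L}^\lambda)\bigr)=f_\lambda(c_{i,K}^\lambda)-f_\lambda(c_{i,L}^\lambda)$) and the nondegeneracy $f_{1,\sig}^\lambda+f_{2,\sig}^\lambda\geq f^\star$ in the spirit of Lemma~\ref{lem:c1+c2}. If you want to salvage your approach, you would have to redesign the homotopy so that the $\mu$-ellipticity never vanishes and the entropy/BV/Poincar\'e--Sobolev chain survives at both endpoints --- which is essentially what the paper's construction achieves.
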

\begin{proof}
The proof relies on a topological degree argument~\cite{LS34, Dei85}. Our goal is to pass continuously from a linear problem 
for which the existence and uniqueness of the solution is known to the nonlinear system given by our scheme. 
Since the construction of such an homotopy (which is parametrized by $\lambda \in [0,1]$) is non-trivial, we give here a description of it, 
as well as of the key estimates that allow us to use this machinery.

We assume that $\bc^{n-1}\in [0,1]^{\#\Tt}$ is given. 
For $\lambda \in [0,1]$, we define the nondecreasing functions $f_\lambda$ and $p_\lambda$ by 
\be\label{eq:fp_lambda}
f_\lambda(c) = \min\left( \frac{1+\lambda}2 , \max \left( \frac{1-\lambda}{2}, c \right)\right), 
\qquad 
p_\lambda(c) = \int_1^c \frac{f_\lambda'(a)}{f_\lambda(a)} \d a
\ee
so that $f_\lambda(c) \geq 0$ and $f_\lambda(c) + f_\lambda(1-c) =1$ for all $c\in\R$. 

We look for the solutions $(\bc^\lambda, \bmu^\lambda) = \left( \left(c_{1,K}^\lambda, c_{2,K}^\lambda\right)_{K\in\Tt}, 
\left(\mu_{1,K}^\lambda, \mu_{2,K}^\lambda\right)_{K\in\Tt}\right)$ of the following modified system. First, equation~\eqref{eq:scheme_ci} 
is replaced by
\begin{multline}
\label{eq:lambda1}
m_K \frac{c_{i,K}^\lambda-c_{i,K}^{n-1}}{\dt}+ \sum_{\substack{\sig \in \Ee_{K, \rm int}\\\sig = K|L}}
\tau_\sig \frac{f_{i,\sig}^\lambda}{\eta_i} \left(\mu_{i,K}^\lambda - \mu_{i,L}^\lambda +\Psi_{i,K}-\Psi_{i,L} \right) \\
+  \theta_{i,\Tt} \sum_{\substack{\sig \in \Ee_{K, \rm int}\\\sig = K|L}} \tau_\sig (f_\lambda(c_{i,K}^\lambda) - f_{\lambda}(c_{i,L}^\lambda)) = 0,
\qquad \forall K\in\Tt,
\end{multline}
where for all $\sig = K|L\in\Ee_{\rm int}$ we have set
\be\label{eq:lambda2}
f_{i,\sig}^\lambda = \begin{cases}
\frac{1-\lambda}2 & \text{if}\; c_{i,K}^\lambda \leq  \frac{1-\lambda}2 \; \text{and}\; c_{i,L}^\lambda \leq  \frac{1-\lambda}2, \\
\frac{1+\lambda}2 & \text{if}\; c_{i,K}^\lambda \geq  \frac{1+\lambda}2 \; \text{and}\; c_{i,L}^\lambda \geq  \frac{1+\lambda}2, \\
f_\lambda(c_{i,K}^\lambda) & \text{if}\; c_{i,K}^\lambda = c_{i,L}^\lambda \in \left(\frac{1-\lambda}2, \frac{1+\lambda}2 \right),\\
\frac{f_\lambda(c_{i,K}^\lambda) - f_\lambda(c_{i,L}^\lambda)}{p_\lambda(c_{i,K}^\lambda) - p_\lambda(c_{i,L}^\lambda)} & \text{otherwise}.
\end{cases}
\ee
Equation~\eqref{eq:scheme_dmu} is replaced for all $K\in\Tt$ by 
\be\label{eq:lambda3}
\mu_{1,K}^\lambda - \mu_{2,K}^\lambda 
= \frac{\a}{m_K} \sum_{\substack{\sig \in \Ee_{K, \rm int}\\\sig = K|L}}
\tau_\sig \left(f_\lambda(c_{1,K}^\lambda) - f_\lambda(c_{1,L}^\lambda)\right) 
+ (1-\lambda) \left(c_{1,K}^\lambda - \frac12\right)
+ \k (1-2c_{1,K}^{n-1}).
\ee
We keep the linear relation~\eqref{eq:scheme_c1+c2}, i.e., we impose that 
\be\label{eq:lambda4}
c_{1,K}^\lambda + c_{2,K}^\lambda = 1, \qquad \forall K\in\Tt.
\ee
Finally, equation~\eqref{eq:ov-mu_K} is replaced by 
\be\label{eq:lambda5}
\sum_{K\in\Tt} m_K \ov \mu_K^\lambda = 0, \qquad \text{with}\quad \ov \mu_K^\lambda=f_\lambda(c_{1,K}^\lambda) \mu_{1,K}^\lambda + f_\lambda(c_{2,K}^\lambda) \mu_{2,K}^\lambda. 
\ee
Multiplying~\eqref{eq:lambda1}  by $\mu_{i,K}^{\lambda} + \Psi_{i,K} + \eta_i \theta_{i,\Tt}p_\lambda(c_{i,K}^\lambda)$ and summing over $K\in\Tt$ and $i\in\{1,2\}$ provides thanks to the same calculations 
as in the proof of Lemma~\ref{lem:NRJ}  that
\be
\label{eq:pouetpouet}
T_1^\lambda + T_2^\lambda + T_3^\lambda + T_4^\lambda + T_5^\lambda   + \dt \Ddd_\Tt^\lambda(\bc^\lambda, \bmu^\lambda) = 0, 
\ee
where we have set
\begin{align*}
T_1^\lambda = &  \alpha \sum_{K\in\Tt} \left(c_{1,K}^\lambda - c_{1,K}^{n-1}\right) \sum_{\substack{\sig \in \Ee_{K,\rm int}\\\sig = K|L}}
\tau_\sig \left( f_\lambda(c_{1,K}^n)  - f_\lambda(c_{1,L}^n) \right),
\\
T_2^\lambda = &   (1-\lambda)  \sum_{K\in\Tt} m_K \left(c_{1,K}^\lambda - c_{1,K}^{n-1}\right) 
\left(c_{1,K}^\lambda - \frac12 \right), 
\\
T_3^\lambda = & \k \sum_{K\in\Tt} m_K \left(c_{1,K}^\lambda - c_{1,K}^{n-1}\right) \left(1-2 c_{1,K}^{n-1}\right) ,
\\
T_4^\lambda = & \sum_{i\in\{1,2\}} \sum_{K\in\Tt} m_K \left(c_{i,K}^\lambda - c_{i,K}^{n-1}\right) \Psi_{i,K},
\\
T_5^\lambda = & \sum_{i\in\{1,2\}} \theta_{i,\Tt} \eta_i \sum_{K\in\Tt} m_K  \left(c_{i,K}^\lambda - c_{i,K}^{n-1}\right) p_{\lambda}(c_{i,K}^\lambda), 
\end{align*}
and 
\[
 \Ddd_\Tt^\lambda(\bc^\lambda, \bmu^\lambda) 
 = \sum_{i \in \{1,2\}} \sum_{\substack{\sig \in \Ee_{\rm int}\\\sig = K|L}} {\tau_\sig}
\frac{f_{i,\sig}^\lambda}{\eta_i} \Big|\mu_{i,K}^{\lambda} + \Psi_{i,K} + \eta_i \theta_{i,\Tt}p_\lambda(c_{i,K}^\lambda)
 - \mu_{i,L}^{\lambda} - \Psi_{i,L} - \eta_i \theta_{i,\Tt}p_\lambda(c_{i,L}^\lambda) \Big|^2 \geq 0.
\]
Elementary convexity inequalities yield 
\[
T_2^\lambda \geq \frac{1-\lambda}2  \sum_{K\in\Tt} m_K \left( \left|c_{1,K}^\lambda - \frac12\right|^2 -  \left|c_{1,K}^{n-1}- \frac12\right|^2 \right)
\]
and, setting $H_\lambda(c) = \int_1^c p_\lambda(a) \d a \geq 0$, 
\[
T_5^\lambda \geq  \sum_{i\in\{1,2\}} \theta_{i,\Tt} \eta_i \sum_{K\in\Tt} m_K \left( H_\lambda(c_{i,K}^\lambda) - H_\lambda(c_{i,K}^{n-1}) \right).
\]
On the other hand, using the boundedness of $c_{i,K}^{n-1}$ between $0$ and $1$, one gets that 
\[
T_3^\lambda \geq - \kappa \sum_{K\in\Tt} m_K \left|c_{1,K}^\lambda \right| - C, 
\]
while the boundedness of $\Psi_{i,K}$ yields
\[
T_4^\lambda \geq - ( \| \Psi_{1}\|_\infty+ \| \Psi_{2}\|_\infty) \sum_{K\in\Tt} m_K \left|c_{1,K}^\lambda \right| - C.
\]
Therefore, since $H_\lambda(c) \le H(c)$ for $c\in[0,1]$
\[
 T_2^\lambda + T_3^\lambda + T_4^\lambda + T_5^\lambda \geq 
 \sum_{K\in\Tt} m_K g_\lambda(c_{1,K}^\lambda) - C,
\]
where $C$ depends only on $\k$, $\O$, $\|\Psi_i\|_\infty$, $\theta_i$, $\eta_i$, $\rho$ and $h_\Tt$ (but not on $\lambda$), and 
where we have set 
\[
g_\lambda(c) = \theta_{1,\Tt} \eta_1 H_\lambda(c) + \theta_{2,\Tt} \eta_2 H_\lambda(1-c) + 
\frac{1-\lambda}2 \left(c - \frac12\right)^2 - (\k +  \| \Psi_{1}\|_\infty+ \| \Psi_{2}\|_\infty) |c|, \qquad c \in \R, 
\]
with the convention that $g_\lambda(c) = +\infty$ is $c \notin [0,1]$ and $\lambda = 1$.
As a consequence of the technical Lemma~\ref{lem:g_lambda} stated in appendix, 
there exists $C$ depending only $\eta_{i}$, $\theta_{i}$, $\rho$, $h_\Tt$, $\|\Psi_i\|_\infty$ and $\k$ such that 
$g_\lambda(c) \geq 2 \left|c-\frac12\right| - C$. Therefore, 
\be\label{eq:T2345lambda}
T_2^\lambda + T_3^\lambda + T_4^\lambda + T_5^\lambda \geq 
\sum_{i\in\{1,2\}} \sum_{K\in\Tt} m_K \left|c_{i,K}^\lambda-\frac12 \right| - C. 
\ee
Besides, performing a discrete integration by parts on the term $T_1^\lambda$ yields 
\[
T_1^\lambda = \alpha \sum_{\substack{\sig \in \Ee_{\rm int}\\\sig = K|L}}\tau_\sig \left[\left(c_{1,K}^\lambda - c_{1,L}^\lambda\right) - 
\left(c_{1,K}^{n-1} - c_{1,L}^{n-1}\right) \right] \left(f_\lambda(c_{1,K}^\lambda) - f_\lambda(c_{1,L}^\lambda)\right).
\]
Since $f_\lambda$ is $1$-Lipschitz continuous, one has 
\begin{align}
T_1^\lambda \geq& \;  \alpha \sum_{\substack{\sig \in \Ee_{\rm int}\\\sig = K|L}} \tau_\sig\left[\left(f_\lambda(c_{1,K}^\lambda) - f_\lambda(c_{1,L}^\lambda)\right)^2 - 
\left(c_{1,K}^{n-1} - c_{1,L}^{n-1}\right) \left(f_\lambda(c_{1,K}^\lambda) - f_\lambda(c_{1,L}^\lambda)\right)\right] \nonumber\\
\geq &\; \frac \alpha 2  \sum_{\substack{\sig \in \Ee_{\rm int}\\\sig = K|L}} \tau_\sig
\left[ \left(f_\lambda(c_{1,K}^\lambda) - f_\lambda(c_{1,L}^\lambda)\right)^2 - \left(c_{1,K}^{n-1} - c_{1,L}^{n-1}\right)^2 \right], \nonumber
\end{align}
so that 
\be
T_1^\lambda \geq \frac \alpha 2  \sum_{\substack{\sig \in \Ee_{\rm int}\\\sig = K|L}}\tau_\sig \left(f_\lambda(c_{1,K}^\lambda) - f_\lambda(c_{1,L}^\lambda)\right)^2 - C \label{eq:T1lambda}
\ee
where $C$ only depends on $\Tt$ and $\alpha$. Combining~\eqref{eq:T1lambda} with \eqref{eq:T2345lambda} in \eqref{eq:pouetpouet}, one gets the existence of $\ctel{cte:lambda}$
not depending on $\lambda$ such that, for all $\lambda \in [0,1]$, there holds
\be\label{eq:lambda8_}
\sum_{i\in\{1,2\}} \sum_{K\in\Tt} m_K \left| c_{i,K}^\lambda - \frac12 \right|
+ \frac\alpha 2  \sum_{\substack{\sig \in \Ee_{\rm int}\\\sig = K|L}}\tau_\sig \left(f_\lambda(c_{1,K}^\lambda) - f_\lambda(c_{1,L}^\lambda)\right)^2
+ \dt\Ddd_\Tt^\lambda(\bc^\lambda, \bmu^\lambda) \leq \cter{cte:lambda}.
\ee
This implies in particular that $\bc_\lambda$ is bounded independently uniformly w.r.t. $\lambda$, hence 
\[
\sum_{K\in\Tt} m_K \left|c_{i,K}^\lambda - \frac12\right|^2 \leq C, \qquad \forall \lambda \in [0,1], \ i\in\{1,2\},
\]
for some $C$ not depending on $\lambda$.

We can derive a control on $\bmu^\lambda$ for $\lambda <1$ from the control of the energy dissipation $\Ddd_\Tt^\lambda$ in~\eqref{eq:lambda8_}, 
but this control degenerates as $\lambda$ tends to $1$. To bypass this difficulty, we multiply~\eqref{eq:lambda1} by $\eta_i p_\lambda(c_{i,K}^\lambda)$. 
Since $f_{i,\sig}^\lambda$ has been designed so that 
\[
f_{i,\sig}^\lambda \left(p_\lambda(c_{i,K}^\lambda) - p_\lambda(c_{i,L}^\lambda) \right) = f_\lambda(c_{i,K}^\lambda) - f_\lambda(c_{i,L}^\lambda), 
\qquad \forall \sig = K|L \in \Ee_{\rm int}, 
\]
we can mimic the proof of Lemma~\ref{lem:entropy} in order to get the existence of $C$ not depending on $\lambda$ such that 
\be\label{eq:lambda9}
\sum_{K\in\Tt} m_K \left( \frac1{m_K} \sum_{\substack{\sig \in \Ee_{K,\rm int}\\\sig=K|L}} \tau_\sig \left(f_\lambda(c_{1,K}^\lambda) - f_\lambda(c_{1,L}^\lambda) \right)\right)^2 \leq C, 
\ee
together with 
\be\label{eq:lambda10}
\sum_{K\in\Tt} m_K \left( \mu_{1,K}^\lambda - \mu_{2,K}^\lambda\right)^2 \leq C.
\ee
Thanks to~\eqref{eq:lambda9}, we can reproduce the proof of Lemma~\ref{lem:c1+c2} to claim that
\[
f_{1,\sig}^\lambda  + f_{2,\sig}^\lambda \geq f^\star, \qquad \forall \sig \in \Ee_{\rm int}, 
\]
for some $f^\star>0$ not depending on $\lambda$. This provides a uniform in $\lambda$ discrete $BV$ estimate on $\left(\ov \mu_K^\lambda\right)_{K\in\Tt}$ 
and finally the existence of some $\ctel{lambda_muL2}$ not depending on $\lambda$ following the path of Lemma~\ref{lem:mu} such that
\be\label{eq:lambda11}
\sum_{i\in\{1,2\}}\sum_{K\in\Tt} m_K \left(\mu_{i,K}^\lambda\right)^2 \leq \cter{lambda_muL2}. 
\ee
Then the topological degree corresponding to system~\eqref{eq:lambda1}--\eqref{eq:lambda5} and the compact set 
\[
\mathcal{K} = \left\{\left( \left(c_{1,K}, c_{2,K}\right)_{K\in\Tt}, \left(\mu_{1,K}, \mu_{2,K}\right)_{K\in\Tt}\right) \; \middle| \; 
\sum_{i\in\{1,2\}} \sum_{K\in\Tt} m_K \left(\left|c_{i,K} - \frac12\right| + \left(\mu_{i,K}\right)^2\right) \leq \cter{cte:lambda}+\cter{lambda_muL2} +1 
\right\}
\]
is constant equal to $1$ whatever $\lambda\in[0,1]$. In particular, there exists a solution to our scheme~\eqref{eq:scheme_ci}--\eqref{eq:ov-mu_K} 
which corresponds to the case $\lambda = 1$.
\end{proof}

The existence of a solution $\left(\bc^n, \bmu^n\right)$ to the scheme~\eqref{eq:scheme_ci}--\eqref{eq:ov-mu_K} 
for all $n \in \{1,\dots, N\}$ allows to define a piecewise constant approximate solution $(\bc_{\Tt,\dt}, \bmu_{\Tt,\dt})$ 
by~\eqref{eq:approx}. 

\section{Convergence of the scheme}\label{sec:conv}

The goal of this section is to prove Theorem~\ref{main:2}, i.e., that $\left(\bc_{\Tt,\dt}, \mu_{\Tt,\dt}\right)$ tends to a 
weak solution $(\bc,\bmu)$ of~\eqref{eq:cont_c1+c2}--\eqref{eq:cont_ovmu} in a suitable topology as $h_\Tt$ and $\dt$ 
tend to $0$ provided the mesh remains sufficiently regular. 
Consider a sequence of regular meshes $\left(\Tt_m, \Ee_m, \left(\x_K\right)_{K\in\Tt_m}\right)_{m\geq 1}$ such 
that~\eqref{eq:reg1}--\eqref{eq:tau_star} hold for some uniform $\zeta$, $\ell^\star$, $\tau^\star$ and $\tau_\star$ 
w.r.t.~$m$, and such that $h_{\Tt_m}$ tends to $0$ as $m$ tends to $+\infty$, and a sequence of times steps 
$\left(\dt_m\right)_{m\geq 1}$ with $\dt_m = T/N_m$ with $N_m$ tending to $+\infty$ with $m$. 
Then the a priori estimates derived in Section~\ref{ssec:a_priori} are satisfied uniformly provided~\eqref{eq:nondeg_2} holds, 
as it is the case if the inverse CFL condition~\eqref{eq:CFL_nondeg} is fulfilled. 

The first lemma gathers some first consequences of the a priori estimates stated in Section~\ref{ssec:a_priori}.
\begin{lem}\label{lem:compact1}
There exist $c_i\in L^\infty(Q_T;[0,1])$ with $c_1+c_2 = 1$ and $\mu_i \in L^2(Q_T)$, $i\in\{1,2\}$ such that, 
up to a subsequence, 
\begin{enumerate}[(i)]
\item $c_{i,\Tt_m, \dt_m} \underset{m\to\infty}\longrightarrow c_i$ a.e. in $Q_T$ and in the $L^\infty(Q_T)$ weak-$\star$ sense, 
\item $\mu_{i,\Tt_m, \dt_m} \underset{m\to\infty}\longrightarrow \mu_i$ weakly in $L^2(Q_T)$.
\end{enumerate}
Moreover, $\int_\O \ov \mu(t,\x) \d\x = 0$ for a.e. $t\geq 0$, where $\ov \mu = c_1 \mu_1 + c_2 \mu_2$. 
\end{lem}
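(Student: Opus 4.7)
The plan is to extract convergent subsequences directly from the a priori estimates of Section~\ref{ssec:a_priori}. First, the $L^2(Q_T)$ bound~\eqref{eq:L2_mu_i} on each $\mu_{i,\Tt_m,\dt_m}$ yields, by Banach--Alaoglu, a weakly convergent subsequence in $L^2(Q_T)$ whose limit we call $\mu_i$. Similarly, the pointwise bound $0<c_{i,\Tt_m,\dt_m}<1$ from Lemma~\ref{lem:positivity} immediately gives weak-$\star$ convergence in $L^\infty(Q_T)$, along a further subsequence, to some $c_i$ with $0\le c_i\le 1$. Passing to the weak-$\star$ limit in the discrete constraint~\eqref{eq:scheme_c1+c2} (which only involves pointwise values) provides $c_1+c_2=1$ a.e. in $Q_T$.

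The nontrivial point is upgrading weak-$\star$ to pointwise a.e. convergence of $c_i$. I would apply a Kolmogorov--Riesz--Fr\'echet compactness criterion to the sequence $c_{1,\Tt_m,\dt_m}$ (from which the case $i=2$ follows by the linear constraint). Space-translation estimates of order $C|\xi|(|\xi|+h_{\Tt_m})$ in $L^2$ follow from the uniform discrete $H^1$-type bound of Corollary~\ref{coro:LinfH1} via the classical finite-volume argument of~\cite[Lemma~9.3]{EGH00}. For time-translations, I would rewrite~\eqref{eq:scheme_ci} as a discrete conservation law, multiply by $c_{1,K}^{n+\ell}-c_{1,K}^{n}$ and sum over $K\in\Tt_m$ and an appropriate range of $n$, then perform a discrete integration by parts to move the discrete gradient onto $c_1$. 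The convective contribution $c_{i,\sig}^n(\mu_{i,K}^n+\Psi_{i,K}-\mu_{i,L}^n-\Psi_{i,L})/\eta_i$ is controlled by Lemma~\ref{lem:c-grad-mu} together with~\eqref{eq:H1-Psi}, while the diffusive contribution $\theta_{i,\Tt}(c_{i,K}^n-c_{i,L}^n)$ is controlled by Lemma~\ref{lem:entropy}; using the geometric identity $m_\sig d_\sig=2m_{D_\sig}$ to sum edge contributions, I expect a bound of the form $C(\tau+\dt_m)$ for a shift $\tau>0$. Combining the two translation estimates gives relative compactness in $L^2(Q_T)$, hence a.e. convergence of a further subsequence, and by dominated convergence, strong convergence in every $L^p(Q_T)$ with $p<\infty$.

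To pass to the limit in the zero-mean condition, observe that the piecewise-constant reconstruction of~\eqref{eq:ov-mu_K} reads $\int_\O \ov\mu_{\Tt_m,\dt_m}(t,\x)\d\x=0$ for a.e. $t\in(0,T)$, with $\ov\mu_{\Tt_m,\dt_m}=c_{1,\Tt_m,\dt_m}\mu_{1,\Tt_m,\dt_m}+c_{2,\Tt_m,\dt_m}\mu_{2,\Tt_m,\dt_m}$. As the product of an a.e.-convergent bounded sequence with a weakly $L^2$-convergent one, $c_{i,\Tt_m,\dt_m}\mu_{i,\Tt_m,\dt_m}\rightharpoonup c_i\mu_i$ weakly in $L^2(Q_T)$. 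Testing the discrete identity against an arbitrary $\psi\in L^2(0,T)$ and passing to the limit yields $\int_0^T\psi(t)\int_\O\ov\mu(t,\x)\d\x\,\d t=0$, whence the desired mean condition for a.e. $t$.

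The main obstacle is the time-translation estimate: the mixed convection-diffusion structure of the flux, together with the degeneracy of the weight $c_{i,\sig}^n$ multiplying $\mu_i$, prevents any single bound from suffices and forces one to combine Lemmas~\ref{lem:entropy} and~\ref{lem:c-grad-mu} carefully, with several Cauchy--Schwarz summations over edges and time slabs. The bookkeeping is standard in finite-volume analysis for Cahn--Hilliard-type problems but nontrivial, and one must verify that the constants obtained are independent of $m$ under the regularity assumptions \eqref{eq:reg1}--\eqref{eq:tau_star} and the inverse CFL~\eqref{eq:CFL_nondeg} (the latter entering only through Lemma~\ref{lem:c1+c2}, hence through Lemma~\ref{lem:mu}).
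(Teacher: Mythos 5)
Your proposal is correct and follows the same overall skeleton as the paper: weak-$\star$ $L^\infty$ limits for the $c_i$ from Lemma~\ref{lem:positivity}, weak $L^2$ limits for the $\mu_i$ from Lemma~\ref{lem:mu}, passage to the limit in the affine constraint~\eqref{eq:scheme_c1+c2}, and the strong-times-weak argument for the zero-mean condition on $\ov\mu$ (your pairing $\int\mu_{i,\Tt_m,\dt_m}(c_{i,\Tt_m,\dt_m}\psi)$ is exactly what the paper's ``enough compactness'' hides). The one place where you genuinely diverge is the upgrade from weak-$\star$ to a.e.\ convergence of $c_{i,\Tt_m,\dt_m}$: the paper disposes of this in one line by invoking a ready-made discrete Aubin--Lions theorem (\cite[Theorem 3.9]{ACM17}), whereas you rederive the compactness by hand via a Kolmogorov--Riesz--Fr\'echet criterion, with space translates from Corollary~\ref{coro:LinfH1} and time translates obtained by testing the scheme against $c_{1,K}^{n+\ell}-c_{1,K}^n$. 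Your route is more self-contained and makes explicit where each estimate is consumed — in particular that after Cauchy--Schwarz the degenerate weight $c_{i,\sig}^n$ is split so that one factor accompanies the $\mu$-increment (Lemma~\ref{lem:c-grad-mu}) and the other, being at most $1$, accompanies the $c$-increment (for which Corollary~\ref{coro:LinfH1} alone suffices; the appeal to Lemma~\ref{lem:entropy} for the diffusive part is harmless but not needed). The cost is the edge-and-time-slab bookkeeping you describe, which the citation to \cite{ACM17} is precisely designed to avoid; the benefit is that you obtain the quantitative translate bound $C(\tau+\dt_m)$ and hence strong $L^p(Q_T)$ convergence for all $p<\infty$ as an explicit byproduct rather than as a consequence of a.e.\ convergence plus domination.
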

\begin{proof}
Because of Lemma~\ref{lem:positivity}, the approximate solutions $c_{i,\Tt_m, \dt_m}$ remain bounded a.e.~in $Q_T$ between $0$ and $1$. 
Therefore, there exists $c_i \in L^\infty(Q_T;[0,1])$ such that, up to a subsequence, $c_{i,\Tt_m, \dt_m}$ tends to $c_i$ in the $L^\infty(Q_T)$-weak-$\star$ sense. 
This is enough to pass in the limit in the relation $c_{1,\Tt_m, \dt_m} + c_{2,\Tt_m, \dt_m} = 1$ which directly follows from~\eqref{eq:scheme_c1+c2}. 
On the other hand, it follows from Lemma~\ref{lem:mu} that the sequences $\left(\mu_{i,\Tt_m, \dt_m}\right)_{m\geq1}$ 
are uniformly bounded in $L^2(Q_T)$, hence the weak convergence in $L^2(Q_T)$ towards some $\mu_i$. 
The almost everywhere convergence of $c_{i,\Tt_m, \dt_m}$ towards $c_i$ follows from some discrete Aubin-Lions lemma,  
see for instance \cite[Theorem 3.9]{ACM17}. 
Finally, given an arbitrary $\varphi \in L^\infty(0,T)$, then multiplying~\eqref{eq:ov-mu_K} by $\frac1{\dt}\int_{(n-1)\dt_m}^{n\dt_m} \varphi(t)\d t$ and 
summing over $n\in\{1,\dots, N_m\}$ yields 
\[
\iint_{Q_T} \left[c_{1,\Tt_m,\dt_m} \mu_{1,\Tt_m,\dt_m} + c_{2,\Tt_m,\dt_m} \mu_{2,\Tt_m,\dt_m}\right] \varphi \d\x\d t = 0.
\]
We have enough compactness to pass to the limit $m\to+\infty$, which gives that 
\[
\iint_{Q_T} \ov\mu(t,\x) \varphi(t) \d\x\d t = 0, \qquad \forall \varphi \in L^\infty(Q_T).
\]
This of course implies that $\int_\O \ov \mu(t,\x) \d\x = 0$ for a.e. $t\geq 0$. 
\end{proof}

Before going further, we need to introduce some additional material concerning the construction of a strongly consistent approximate gradient 
based on the SUSHI finite volume scheme~\cite{EGH10}. We gather in the following proposition the properties of this approximate gradient 
to be used in what follows. The super-admissibility of the mesh is crucial at this point. 
We refer to \cite{EGH10} or to \cite[Chapter 13]{kangourou_2018} for the proofs corresponding to Proposition~\ref{prop:sushi}. 
\begin{prop}[\cite{EGH10}]\label{prop:sushi}
Define by $\Xx_{\Tt_m,\dt_m}$ the space of bounded piecewise constant functions per control volume and per time step 
as $c_{i,\Tt_m,\dt_m}$ and $\mu_{i,\Tt_m,\dt_m}$, i.e., 
\[
\Xx_{\Tt_m,\dt_m} = \left\{ u_{\Tt_m, \dt_m} \in L^\infty(Q_T) \; \middle| \; 
u_{\Tt_m, \dt_m}(t,\x) = u_K^n \in \R, \; \forall (t,\x) \in (t^{n-1},t^n]\times K
\right\}.
\]
Then there exists a linear operator $\grad_{\Tt_m}:\Xx_{\Tt_m} \to L^\infty(Q_T)^2$ such that: 
\begin{enumerate}[(i)]
\item for all $u_{\Tt_m,\dt_m}, v_{\Tt_m,\dt_m} \in \Xx_{\Tt_m, \dt_m}$ and all $n\in\{1,\dots, N_m\}$, one has
\[
\int_\O \grad_{\Tt_m} u_{\Tt_m,\dt_m}(t^n,\x) \cdot \grad_{\Tt_m} v_{\Tt_m,\dt_m}(t^n,\x) \d\x = 
\sum_{\substack{\sig \in \Ee_{{\rm int},m}\\\sig = K|L}} \tau_\sig (u_K^n - u_L^n)(v_K^n - v_L^n);
\]
\item if the sequence $\left(u_{\Tt_m,\dt_m}\right)_{m\geq 1}$ is such that $\|u_{\Tt_m,\dt_m}\|_{L^2(Q_T)}$ and $\|\grad_{\Tt_m} u_{\Tt_m,\dt_m}\|_{L^2(Q_T)^2}$ 
are bounded w.r.t. $m$, then there exists $u \in L^2((0,T);H^1(\O))$ such that $u_{\Tt_m,\dt_m}$ converges weakly
towards $u$ in $L^2(Q_T)$ and $\grad_{\Tt_m} u_{\Tt_m,\dt_m}$ converges weakly towards $\grad u$ in $L^2(Q_T)^2$;
\item\label{it:smooth} let $\varphi \in C^\infty(\ov Q_T)$, and define $\varphi_{\Tt_m,\dt_m}$ by fixing $\varphi_K^n = \varphi(t^n, \x_K)$, 
then $\grad_{\Tt_m}\varphi_{\Tt_m,\dt_m}$ converges towards $\grad \varphi$ in $L^p(Q_T)^2$ for all $p \in [1,\infty)$;
\item for all $K\in \Tt_m$ and all $n \in \{1,\dots, N_m\}$, there holds 
\be\label{eq:grad_int_K}
\int_K \grad_{\Tt_m} u_{\Tt_m,\dt_m}(t^n,\x) \d\x =
\sum_{\substack{\sig\in\Ee_{{K,\rm int},m}\\\sig = K|L}} d_{K,\sig}\tau_\sig (u_L^n - u_K^n) \n_{KL}.
\ee
\end{enumerate}
\end{prop}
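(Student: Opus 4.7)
The plan is to introduce the SUSHI-type gradient reconstruction on the dual partition of half-diamonds $\{D_{K\sig}\}_{K \in \Tt_m,\ \sig \in \Ee_{K,\rm int}}$ and then check the four listed properties one at a time. On each half-diamond I would set
\[
\grad_{\Tt_m} u_{\Tt_m,\dt_m}(t^n,\cdot)|_{D_{K\sig}} = \nabla_K u^n + \beta_{K,\sig}^n \, \n_{K,\sig},
\]
where $\nabla_K u^n := \tfrac{1}{m_K} \sum_{\sig \in \Ee_{K,\rm int}} m_\sig (u_\sig^n - u_K^n) \n_{K,\sig}$ is the piecewise-constant cell gradient coming from discrete Green's formula (with the edge value $u_\sig^n = (u_K^n + u_L^n)/2$, which is the exact midpoint value at $\x_\sig$ on a super-admissible mesh), and $\beta_{K,\sig}^n$ is a scalar normal stabilization, proportional to $d_{K\sig}^{-1}(u_\sig^n - u_K^n - \nabla_K u^n \cdot (\x_\sig - \x_K))$, whose normalization is tuned so that property (i) holds with equality.

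For (i), I would expand $\int_{D_{K\sig}} \grad_{\Tt_m} u \cdot \grad_{\Tt_m} v$ into cell--cell, cell--stab and stab--stab pieces and sum over $K$ and $\sig$: the cross terms telescope via a discrete Green identity (using $\x_\sig$ at the midpoint) and the diagonal terms collapse to $\tau_\sig (u_K - u_L)(v_K - v_L)$ once the geometric identity $d_{K\sig} m_\sig = 2 m_{D_{K\sig}}$ is applied. Property (iv) is then obtained directly from the definition by integrating $\grad_{\Tt_m} u$ over $K = \bigcup_{\sig \in \Ee_{K,\rm int}} D_{K\sig}$ and again invoking $d_{K\sig} m_\sig = 2 m_{D_{K\sig}}$ to recover the prescribed sum $\sum_\sig d_{K\sig} \tau_\sig (u_L - u_K) \n_{KL}$.

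The consistency statement (iii) is where super-admissibility becomes indispensable: the midpoint condition $\x_\sig = \tfrac12(\x_K + \x_L)$ makes the naive edge value $\varphi_\sig := (\varphi(\x_K) + \varphi(\x_L))/2$ agree with $\varphi(\x_\sig)$ up to $O(h_K^2)$, and a Taylor expansion of $\varphi$ at $\x_K$ combined with the shape-regularity and transmissivity bounds~\eqref{eq:reg1}--\eqref{eq:tau_star} then yields $\|\grad_{\Tt_m} \varphi_{\Tt_m,\dt_m} - \grad \varphi\|_{L^\infty(Q_T)} \to 0$ as $h_{\Tt_m} \to 0$, which gives the $L^p$ convergence for every finite $p$ by dominated convergence. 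Finally for (ii), the uniform $L^2$ bounds on $u_{\Tt_m,\dt_m}$ and $\grad_{\Tt_m} u_{\Tt_m,\dt_m}$ produce by Banach--Alaoglu weak limits $u$ and $\bG$ along a subsequence; pairing against smooth test fields $\bvarphi \in C^\infty_c(Q_T)^2$ and using property (i) together with the consistency (iii) in a discrete integration by parts identifies $\bG = \grad u$ distributionally, so that $u \in L^2((0,T);H^1(\O))$.

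The main obstacle is reconciling (i), (iii) and (iv) with a single choice of reconstruction: super-admissibility is exactly what makes this possible, as the midpoint condition simultaneously kills the first-order Taylor remainder (giving consistency of $\nabla_K$) and renders the half-diamond geometry symmetric enough for the algebraic cancellations in the computations of (i) and (iv) to go through.
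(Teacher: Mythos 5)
First, a point of reference: the paper does not prove Proposition~\ref{prop:sushi} at all --- it is quoted from~\cite{EGH10} and the text explicitly defers to that reference and to \cite[Chapter 13]{kangourou_2018} for the proofs. Your sketch reconstructs precisely the construction those references use: a piecewise constant reconstruction on the half-diamonds $D_{K\sig}$ equal to the Green-formula cell gradient $\nabla_K u^n$ plus a normal stabilization with a dimension-dependent normalization, with the edge value $u_\sig^n=(u_K^n+u_L^n)/2$ rendered second-order consistent at $\x_\sig$ by super-admissibility. The algebra you describe for (i) and (iv) --- cancellation of the cross terms via the identity $\sum_{\sig\in\Ee_K} m_\sig\, \n_{K\sig}(\x_\sig-\x_K)^T=m_K\,\mathrm{Id}$ together with $d_{K\sig}m_\sig=2m_{D_{K\sig}}$ --- is the correct mechanism, and your treatment of (ii) by weak compactness plus discrete integration by parts against smooth fields is standard. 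So the approach is the intended one.

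Two points would need repair in a full write-up, both localized at the boundary. The geometric identity above runs over \emph{all} edges of $K$, whereas your $\nabla_K$ and the sums in (i) and (iv) involve only $\Ee_{K,\rm int}$; for boundary cells the telescoping you invoke therefore does not close as written, and one must complete the reconstruction there (e.g.\ by including the boundary half-diamonds with the convention $u_\sig=u_K$ on boundary edges, which contributes nothing to (i) and restores the exact equality in (iv)). This same fix is what invalidates your claim of $L^\infty$ convergence in (iii): on boundary cells the resulting $\nabla_K\varphi$ is merely bounded, not consistent, which is exactly why the paper remarks immediately after the proposition that uniform convergence would require additional degrees of freedom on the boundary edges. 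The stated $L^p(Q_T)$ convergence for $p<\infty$ still follows --- interior consistency of order $h_{\Tt_m}$ plus a uniform bound on a boundary layer of measure $O(h_{\Tt_m})$ --- but not by the dominated-convergence route from an $L^\infty$-convergent sequence that you propose. Neither issue affects the conclusion; they are the technicalities the paper avoids by citing \cite{EGH10}.
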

Let us point out that we could have improved the convergence property in \eqref{it:smooth} until obtaining the 
uniform convergence at the price of adding some additional degrees of freedom on the boundary edges. However, the convergence 
properties stated in Proposition~\ref{prop:sushi} are sufficient to prove the convergence of our scheme. Therefore, we avoid the introduction 
of additional material.

Next statement is a straightforward consequence of the combination of Proposition~\ref{prop:sushi} together with 
Corollary~\ref{coro:LinfH1}.

\begin{coro}\label{coro:weak_grad}
Up to a subsequence, the approximate gradient $\grad_{\Tt_m} c_{i,\Tt_m,\dt_m}$ converges 
towards $\grad c_i$ in the weak-$\star$ topology of $L^\infty((0,T);L^2(Q_T))^2$  as $m$ tends to $+\infty$. 
In particular, $c_i$ belongs to $L^\infty((0,T);H^1(\O))$. Moreover, $\grad_{\Tt_m} \Psi_{i,\Tt_m}$ 
converges weakly towards $\grad \Psi_i$.
\end{coro}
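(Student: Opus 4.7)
The plan is to deduce both convergences directly from the uniform estimate of Corollary~\ref{coro:LinfH1} combined with the SUSHI gradient properties gathered in Proposition~\ref{prop:sushi}. Taking $u = v = c_{i,\Tt_m,\dt_m}$ in item (i) of Proposition~\ref{prop:sushi} gives
\[
\int_\O \left| \grad_{\Tt_m} c_{i,\Tt_m,\dt_m}(t^n,\x) \right|^2 \d\x = \sum_{\substack{\sig \in \Ee_{{\rm int},m}\\\sig = K|L}} \tau_\sig (c_{i,K}^n - c_{i,L}^n)^2,
\]
which Corollary~\ref{coro:LinfH1} (extended from $i=1$ to $i=2$ via~\eqref{eq:scheme_c1+c2}) bounds uniformly in $m$ and $n$. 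Since $\grad_{\Tt_m} c_{i,\Tt_m,\dt_m}$ is piecewise constant in time, this yields a uniform bound in $L^\infty((0,T);L^2(\O))^2$, and Banach--Alaoglu provides a subsequence that converges to some $\bg_i$ in the weak-$\star$ topology of this space.

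To identify $\bg_i = \grad c_i$, I would apply item (ii) of Proposition~\ref{prop:sushi} to $c_{i,\Tt_m,\dt_m}$, which is trivially bounded in $L^2(Q_T)$ and whose discrete gradient is bounded in $L^2(Q_T)^2$ (since $T<\infty$). This produces a limit $\tilde c_i \in L^2((0,T);H^1(\O))$ with $\grad_{\Tt_m} c_{i,\Tt_m,\dt_m} \rightharpoonup \grad \tilde c_i$ weakly in $L^2(Q_T)^2$. The a.e.~convergence of Lemma~\ref{lem:compact1} forces $\tilde c_i = c_i$, and uniqueness of weak limits tested against $C^\infty_c(Q_T)^2$ (which lies simultaneously in the predual of $L^\infty_t L^2_x$ and of $L^2_{t,x}$) forces $\bg_i = \grad c_i$. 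This proves the weak-$\star$ convergence and, as a by-product, that $c_i \in L^\infty((0,T);H^1(\O))$.

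For the potential, estimate~\eqref{eq:H1-Psi} combined with item (i) of Proposition~\ref{prop:sushi} shows that $\grad_{\Tt_m} \Psi_{i,\Tt_m}$ is uniformly bounded in $L^2(Q_T)^2$, while the cell-averaging definition of $\Psi_{i,K}$ and the Lebesgue differentiation theorem yield strong $L^2(Q_T)$ convergence of $\Psi_{i,\Tt_m}$ towards $\Psi_i$. Applying item (ii) of Proposition~\ref{prop:sushi} once more then delivers the weak $L^2(Q_T)^2$ convergence of $\grad_{\Tt_m} \Psi_{i,\Tt_m}$ towards $\grad \Psi_i$.

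The only subtle point I anticipate is reconciling the two convergence notions in the second step, as weak-$\star$ in $L^\infty_t L^2_x$ and weak in $L^2_{t,x}$ are \emph{a priori} distinct topologies; this is resolved in a routine way by testing against a common dense subset such as $C^\infty_c(Q_T)^2$. Everything else is essentially a direct transcription of the discrete a priori bounds through the SUSHI machinery.
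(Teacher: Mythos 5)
Your proposal is correct and follows exactly the route the paper intends: the paper itself gives no detailed argument, stating only that the corollary is ``a straightforward consequence'' of Proposition~\ref{prop:sushi} and Corollary~\ref{coro:LinfH1}, and your elaboration (item~(i) plus the uniform discrete $H^1$ bound to get the $L^\infty((0,T);L^2(\O))$ estimate and weak-$\star$ compactness, item~(ii) plus the a.e.\ convergence of Lemma~\ref{lem:compact1} to identify the limit as $\grad c_i$, and estimate~\eqref{eq:H1-Psi} with the strong $L^2$ convergence of the cell averages for the potential) is precisely the intended chain of reasoning. The only cosmetic remark is that the $i=2$ case and the identification of the two weak limits via a common dense test class, which you handle explicitly, are indeed the small details the paper leaves implicit.
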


The purpose of next lemma is twofold. First, one shows that~\eqref{eq:cont_dmu} and \eqref{eq:cont_neumann} are satisfied 
by the limits $c_{i}, \mu_i$. Second, we deduce from this consistency property the approximate gradient 
of the volume fractions converges strongly in $L^2(Q_T)$. 

\begin{lem}\label{lem:strong_grad}
The weak formulation~\eqref{eq:weak_dmu} holds for all $\varphi \in C^\infty_c([0,T)\times\ov\O)$. 
Moreover, $\grad_{\Tt_m} c_{i,\Tt_m,\dt_m}$ converges strongly in $L^2(Q_T)$ towards $\grad c_i$ as $m$ tends to $+\infty$.
\end{lem}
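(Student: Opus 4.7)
\begin{Proof}[Proof plan]
The plan is to first derive the weak formulation \eqref{eq:weak_dmu} by testing the discrete relation \eqref{eq:scheme_dmu} against the nodal interpolate of a smooth test function, and then to obtain strong $L^2$ convergence of the gradient via the classical norm-convergence argument in a Hilbert space (weak convergence plus convergence of the norms).

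For the first part, fix $\varphi \in C^\infty_c([0,T)\times\ov\O)$ and set $\varphi_K^n = \varphi(t^n,\x_K)$. Multiply \eqref{eq:scheme_dmu} by $\dt_m m_K \varphi_K^n$, sum over $K \in \Tt_m$ and $n\in\{1,\dots,N_m\}$, and apply the discrete integration by parts that underlies Proposition~\ref{prop:sushi}\,(i) to rewrite the $\alpha$-term as $\alpha \iint_{Q_T} \grad_{\Tt_m} c_{1,\Tt_m,\dt_m} \cdot \grad_{\Tt_m}\varphi_{\Tt_m,\dt_m} \d\x\d t$. One then passes to the limit using the weak convergence of $\mu_{i,\Tt_m,\dt_m}$ in $L^2(Q_T)$ from Lemma~\ref{lem:compact1}, the weak convergence of $\grad_{\Tt_m} c_{1,\Tt_m,\dt_m}$ in $L^2(Q_T)^2$ from Corollary~\ref{coro:weak_grad}, the strong convergence $\grad_{\Tt_m}\varphi_{\Tt_m,\dt_m} \to \grad\varphi$ and $\varphi_{\Tt_m,\dt_m} \to \varphi$ in $L^p$ (Proposition~\ref{prop:sushi}\,(iii)), and the dominated convergence theorem for the term involving the time-shifted iterate $c_{1,K}^{n-1}$. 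This shift is harmless since $c_{1,\Tt_m,\dt_m}$ converges strongly in $L^p(Q_T)$ for any $p<\infty$ (from Lemma~\ref{lem:compact1} combined with the $L^\infty$ bound) and the discrete $L^\infty((0,T);H^1(\O))$ estimate of Corollary~\ref{coro:LinfH1} controls the shift $\|c_{1,\Tt_m,\dt_m}(\cdot+\dt_m)-c_{1,\Tt_m,\dt_m}\|_{L^2(Q_T)}\to 0$.

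For the strong convergence of the gradient, the idea is to multiply \eqref{eq:scheme_dmu} by $\dt_m m_K c_{1,K}^n$, sum over $n$ and $K$, and use the same discrete integration by parts to obtain
\[
\alpha \iint_{Q_T} \left|\grad_{\Tt_m} c_{1,\Tt_m,\dt_m}\right|^2 \d\x\d t = \iint_{Q_T}\left(\mu_{1,\Tt_m,\dt_m}-\mu_{2,\Tt_m,\dt_m}\right)c_{1,\Tt_m,\dt_m}\d\x\d t - \k \iint_{Q_T}\left(1-2\tilde c_{1,\Tt_m,\dt_m}\right)c_{1,\Tt_m,\dt_m}\d\x\d t,
\]
where $\tilde c_{1,\Tt_m,\dt_m}$ denotes the time-shifted piecewise constant function. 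Using the strong $L^2$ convergence of $c_{1,\Tt_m,\dt_m}$ (and its shift) and the weak $L^2$ convergence of $\mu_{i,\Tt_m,\dt_m}$, the right-hand side converges to $\iint_{Q_T}(\mu_1-\mu_2)c_1 \d\x\d t - \k\iint_{Q_T}(1-2c_1)c_1 \d\x\d t$.

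To identify this limit with $\alpha\iint_{Q_T}|\grad c_1|^2\d\x\d t$, I would invoke the limit identity \eqref{eq:weak_dmu} with test function $c_1$ itself. All four terms in \eqref{eq:weak_dmu} make sense and are continuous bilinear forms on $L^2((0,T);H^1(\O))\cap L^\infty(Q_T)$, so a standard density argument (truncation in time and mollification in space, preserving the regularity provided by Corollary~\ref{coro:weak_grad}) extends the identity to $\varphi=c_1$. This yields $\lim_m \iint_{Q_T}|\grad_{\Tt_m} c_{1,\Tt_m,\dt_m}|^2 = \iint_{Q_T}|\grad c_1|^2$, which combined with the weak convergence of Corollary~\ref{coro:weak_grad} in the Hilbert space $L^2(Q_T)^2$ gives the claimed strong convergence. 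Strong $L^2$ convergence of $\grad_{\Tt_m} c_{2,\Tt_m,\dt_m}=-\grad_{\Tt_m} c_{1,\Tt_m,\dt_m}$ follows from the discrete constraint \eqref{eq:scheme_c1+c2}. I expect the main technical obstacle to be the density/regularization step used to legitimize the choice $\varphi=c_1$ in the limit weak formulation, as one must ensure continuity of all four terms (in particular the product $(\mu_1-\mu_2)\varphi$) under approximation of $c_1$ by smooth compactly supported functions in the appropriate topology.
\end{Proof}
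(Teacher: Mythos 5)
Your proposal follows essentially the same route as the paper: pass to the limit in the discrete relation \eqref{eq:scheme_dmu} tested against the nodal interpolate of $\varphi$ to obtain \eqref{eq:weak_dmu}, extend \eqref{eq:weak_dmu} to the test function $\varphi = c_1$ by density, then test the same discrete relation against $c_{1,\Tt_m,\dt_m}$ itself and conclude strong convergence of $\grad_{\Tt_m}c_{1,\Tt_m,\dt_m}$ from weak convergence plus convergence of the $L^2$ norms. The decomposition, the role of each convergence result (Lemma~\ref{lem:compact1}, Corollary~\ref{coro:weak_grad}, Proposition~\ref{prop:sushi}), and the final Hilbert-space argument all coincide with the paper's proof.

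The one step whose stated justification would fail is your claim that the discrete $L^\infty((0,T);H^1(\O))$ estimate of Corollary~\ref{coro:LinfH1} controls the time translate $\|c_{1,\Tt_m,\dt_m}(\cdot+\dt_m,\cdot)-c_{1,\Tt_m,\dt_m}\|_{L^2(Q_T)}$: a purely spatial regularity bound gives no information on oscillations in time, so it cannot by itself force this quantity to vanish. The fact you need is nonetheless true and is exactly how the paper handles the explicit term $1-2c_{1,K}^{n-1}$: since $c_{1,\Tt_m,\dt_m}$ converges strongly in $L^2(Q_T)$ (Lemma~\ref{lem:compact1} gives a.e.\ convergence together with the uniform bound $0\le c_{1,\Tt_m,\dt_m}\le 1$), the converse of the Riesz--Fr\'echet--Kolmogorov theorem shows that its time translates tend to $0$ in $L^2(Q_{T-\dt_m})$, whence the time-shifted iterate $\check c_{1,\Tt_m,\dt_m}$ converges strongly to $c_1$ as well. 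With that repair your argument is complete and matches the paper's.
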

\begin{proof}
Let us first establish~\eqref{eq:weak_dmu}. As a preliminary, define the piecewise constant function 
\[
\check c_{1,\Tt_m,\dt_m}(t,\x) = c_K^{n-1} \quad \text{if}\; (t,\x) \in [t^{n-1},t^n)\times K, \qquad n \in \{1,\dots N_m\}, \; K\in\Tt_m.
\]
Then $\check c_{1,\Tt_m,\dt_m}$ remains bounded between $0$ and $1$. Therefore, 
\begin{multline*}
\iint_{Q_T} \left|\check c_{1,\Tt_m,\dt_m} -c_{1,\Tt_m,\dt_m}\right|^2\d\x\d t \\
\leq \dt |\O| + 
\iint_{Q_{T-\dt_m}} \left|c_{1,\Tt_m,\dt_m}(t+\dt_m,\x) -c_{1,\Tt_m,\dt_m}(t,\x)\right|^2\d\x\d t.
\end{multline*}
Following Lemma~\ref{lem:compact1}, $\left(c_{1,\Tt_m,\dt_m}\right)_{m\geq1}$ converges in $L^2(Q_T)$. The reciprocal of 
the Riesz-Fr\'echet-Kolmogorov theorems allows us to claim that the second term in the right-hand side tends to $0$ as $m$ tends to $+\infty$. 
Therefore, $\check c_{1,\Tt_m,\dt_m}$ tends to $c_1$ strongly in $L^2(Q_T)$ together with $c_{1,\Tt_m,\dt_m}$.

Given an arbitrary $\varphi \in C^\infty(\ov Q_T)$, we define 
$\varphi_K^n = \varphi(t^n,\x_K)$ for all $n\in \{1,\dots, N_m\}$ and all $K\in\Tt_m$. Multiplying~\eqref{eq:scheme_dmu} by 
$\varphi_K^n$ and summing over $n$ and $K$ yields
\begin{multline}\label{eq:yenamarre}
\iint_{Q_T} \left(\mu_{1,\Tt_m, \dt_m}-\mu_{2,\Tt_m, \dt_m}\right) \varphi_{\Tt_m, \dt_m} \d\x\d t \\
= \alpha \iint_{Q_T} \grad_{\Tt_m} c_{1,\Tt_m, \dt_m} \cdot \grad_{\Tt_m} \varphi_{\Tt_m, \dt_m}\d\x\d t 
+ \k  \iint_{Q_T} (1-2\check c_{1,\Tt_m, \dt_m})  \varphi_{\Tt_m, \dt_m}\d\x\d t.
\end{multline}
We can pass to the limit $m\to+\infty$ in the previous equality. Since $\mu_{i,\Tt_m, \dt_m}$ converges weakly towards $\mu_i$ in $L^2(Q_T)$ thanks 
to Lemma~\ref{lem:compact1}, 
since $\grad_{\Tt_m} c_{1,\Tt_m, \dt_m}$ converges weakly in $L^2(Q_T)^2$ towards $\grad c_1$ thanks to Corollary~\ref{coro:weak_grad}, 
since $\check c_{1,\Tt_m, \dt_m}$ converges 
in $L^2(Q_T)$ towards $c_1$, since  $\varphi_{\Tt_m, \dt_m}$ converges uniformly towards $\varphi$, and since $\grad_{\Tt_m}\varphi_{\Tt_m, \dt_m}$ 
converges towards $\grad \varphi$ in $L^2(Q_T)^2$ thanks to Proposition~\ref{prop:sushi}, one recovers~\eqref{eq:weak_dmu}.

Thanks to a standard density arguments, one checks that~\eqref{eq:weak_dmu} holds for $\varphi \in L^2((0,T);H^1(\O))$, thus in particular 
for $\varphi = c_1$, which yields 
\[
\alpha \iint_{Q_T} |\grad c_1|^2\d\x\d t = \iint_{Q_T} \left[ \mu_1 - \mu_2 - \k (1-2c_1) \right] c_1 \d\x\d t.
\]
Choosing $\varphi_{\Tt_m, \dt_m} = c_{1,\Tt_m,\dt_m}$ in~\eqref{eq:yenamarre} and passing to the limit $m\to+\infty$ shows that 
\[
\lim_{m\to \infty} \iint_{Q_T} |\grad_{\Tt_m} c_{1,\Tt_m,\dt_m} |^2\d\x\d t =  \frac1\alpha\iint_{Q_T} \left[ \mu_1 - \mu_2 - \k (1-2c_1) \right] c_1 \d\x\d t = 
\iint_{Q_T} |\grad c_1|^2\d\x\d t, 
\]
hence the strong convergence of $\grad_{\Tt_m} c_{1,\Tt_m,\dt_m}$ towards $\grad c_1$.
\end{proof}

Next lemma focuses on the term corresponding to $c_i\grad \mu_i$.
For $m\geq 1$, we define 
\be\label{eq:bV1}
\bV_{i,\sig}^n = 2 c_{i,\sig}^n \frac{\mu_{i,K}^n - \mu_{i,L}^n}{d_\sig} \n_{KL}, \qquad \forall \sig = K|L \in \Ee_{{\rm int},m}, \; \forall n \in \{1,\dots,N_m\},
\ee
and the corresponding piecewise constant vector field
\be\label{eq:bV2}
\bV_{i,\Dd_m, \dt_m}(t,\x) 
=\begin{cases}
\bV_{i,\sig}^n &\text{if}\; (t,\x) \in (t^{n-1}, t^n] \times D_\sig, \; \sig \in \Ee_{{\rm int},m}, \\
0 &\text{if}\; (t,\x) \in (t^{n-1}, t^n] \times \left(K \setminus \bigcup_{\sig \in \Ee_{K,\rm int}} D_{K,\sig}\right).
\end{cases}
\ee

\begin{lem}\label{lem:conv_cigradmu}
Let $\bV_{i,\Dd_m, \dt_m}$ be defined by~\eqref{eq:bV1}--\eqref{eq:bV2}, then, up to a subsequence, $\bV_{i,\Dd_m, \dt_m}$ 
converges weakly towards $-c_i \grad \mu_i$  in $L^2(Q_T)$ as $m$ tends to $+\infty$.
\end{lem}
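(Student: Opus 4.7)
A uniform $L^2$ bound is the starting point: using $m_{D_\sig}=\tfrac12 d_\sig m_\sig$ together with $0\le c_{i,\sig}^n\le 1$, a direct computation yields
\[
\|\bV_{i,\Dd_m,\dt_m}\|_{L^2(Q_T)^2}^2
= 2\sum_{n=1}^{N_m}\dt_m\sum_{\substack{\sig\in\Ee_{{\rm int},m}\\\sig=K|L}}\tau_\sig (c_{i,\sig}^n)^2(\mu_{i,K}^n-\mu_{i,L}^n)^2,
\]
which is controlled uniformly in $m$ by Lemma~\ref{lem:c-grad-mu}. Up to a subsequence, $\bV_{i,\Dd_m,\dt_m}\rightharpoonup\bV_i$ weakly in $L^2(Q_T;\R^2)$, and the real task is to identify $\bV_i$ with $-c_i\grad\mu_i$.

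To that end I would test against an arbitrary $\bpsi\in C^\infty_c(Q_T;\R^2)$. Since $\bV_{i,\sig}^n$ is constant and parallel to $\n_{KL}$ on $D_\sig$, the duality pairing rewrites as
\[
\iint_{Q_T}\bV_{i,\Dd_m,\dt_m}\cdot\bpsi\,\d\x\d t
= \sum_{n=1}^{N_m}\dt_m\sum_{\substack{\sig\in\Ee_{{\rm int},m}\\\sig=K|L}} m_\sig c_{i,\sig}^n(\mu_{i,K}^n-\mu_{i,L}^n)\,\bpsi_\sig^n\cdot\n_{KL},
\]
where $\bpsi_\sig^n$ denotes the average of $\bpsi$ over $(t^{n-1},t^n]\times D_\sig$. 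A standard discrete integration by parts in space, exploiting $\n_{L,\sig}=-\n_{K,\sig}$, recasts this sum as $\sum_{n,K}\dt_m\,\mu_{i,K}^n\,G_K^n$ with
\[
G_K^n := \sum_{\sig\in\Ee_{K,{\rm int},m}} m_\sig c_{i,\sig}^n \bpsi_\sig^n\cdot\n_{K,\sig},
\]
and the boundary edges cause no trouble because $\bpsi$ vanishes near $\p\O$ for $m$ large enough.

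The core of the argument, and what I expect to be the main obstacle, is to prove that the piecewise constant function $g_{\Tt_m,\dt_m}:=G_K^n/m_K$ on $(t^{n-1},t^n]\times K$ converges \emph{strongly} in $L^2(Q_T)$ to $\div(c_i\bpsi)$; strong convergence is mandatory in order to pass to the limit in the weak--strong product with $\mu_{i,\Tt_m,\dt_m}$. Splitting $c_{i,\sig}^n = c_{i,K}^n + (c_{i,\sig}^n - c_{i,K}^n)$ inside $G_K^n$, the first contribution $c_{i,K}^n\sum_\sig m_\sig\bpsi_\sig^n\cdot\n_{K,\sig}$ is by the divergence theorem applied to $\bpsi$ on $K$ a consistent approximation of $c_{i,K}^n\int_K\div\bpsi\,\d\x$, and therefore converges to $c_i\div\bpsi$ in $L^2(Q_T)$ thanks to the strong $L^2$ convergence of $c_{i,\Tt_m,\dt_m}$ (Lemma~\ref{lem:compact1}). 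For the second contribution, super-admissibility is crucial: the identity $d_{K,\sig}=d_\sig/2$ simplifies Proposition~\ref{prop:sushi}(iv) to $\int_K\grad_{\Tt_m}c_{i,\Tt_m,\dt_m}(t^n,\cdot)\,\d\x=\tfrac12\sum_{\sig\in\Ee_{K,\rm int,m}} m_\sig(c_{i,L}^n-c_{i,K}^n)\n_{K,\sig}$. Combined with the trapping property $c_{i,\sig}^n\in[\min(c_{i,K}^n,c_{i,L}^n),\max(c_{i,K}^n,c_{i,L}^n)]$ of the logarithmic mean, which permits substitution by the arithmetic mean up to a residue controlled in $L^2(Q_T)$ by Corollary~\ref{coro:LinfH1} and the smoothness of $\bpsi$, this identifies the second contribution as an approximation of $\int_K\grad_{\Tt_m}c_{i,\Tt_m,\dt_m}\cdot\bpsi\,\d\x$. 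The strong $L^2$ convergence of $\grad_{\Tt_m}c_{i,\Tt_m,\dt_m}$ to $\grad c_i$ proved in Lemma~\ref{lem:strong_grad} then delivers strong convergence of this piece to $\grad c_i\cdot\bpsi$, so that altogether $g_{\Tt_m,\dt_m}\to c_i\div\bpsi+\grad c_i\cdot\bpsi=\div(c_i\bpsi)$ strongly in $L^2(Q_T)$.

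Performing the weak--strong passage to the limit in $\sum_{n,K}\dt_m\,\mu_{i,K}^n G_K^n$ yields $\iint_{Q_T}\bV_i\cdot\bpsi\,\d\x\d t=\iint_{Q_T}\mu_i\div(c_i\bpsi)\,\d\x\d t$ for every $\bpsi\in C^\infty_c(Q_T;\R^2)$. The right-hand side is, by the formal product rule, the distributional pairing $-\langle c_i\grad\mu_i,\bpsi\rangle$ with $c_i\grad\mu_i:=\grad(c_i\mu_i)-\mu_i\grad c_i$; the identity thus upgrades $c_i\grad\mu_i$ to an $L^2(Q_T;\R^2)$ function coinciding with $-\bV_i$, which simultaneously delivers the $L^2$-integrability required in Definition~\ref{Def:weak} and the announced weak convergence.
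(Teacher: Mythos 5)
Your overall architecture coincides with the paper's: uniform $L^2$ bound from Lemma~\ref{lem:c-grad-mu}, extraction of a weak limit $\bV_i$, identification by testing against smooth vector fields, and a decomposition into a divergence piece (limit $\mu_i c_i\div\bPhi$), a discrete-gradient piece (limit $\mu_i\grad c_i\cdot\bPhi$, using \eqref{eq:grad_int_K} and Lemma~\ref{lem:strong_grad}), and residues. However, there is a genuine gap at the single most delicate step: the residue created by replacing the logarithmic mean $c_{i,\sig}^n$ by an arithmetic-type mean is \emph{not} controlled by Corollary~\ref{coro:LinfH1} and the smoothness of the test field. After summation by parts this residue reads, per time step,
\[
\sum_{\substack{\sig\in\Ee_{{\rm int},m}\\ \sig=K|L}} m_\sig\,(c_{i,\sig}^n-\tilde c_{i,\sig}^n)\,(\mu_{i,K}^n-\mu_{i,L}^n)\,\bpsi_\sig^n\cdot\n_{K\sig},
\]
so it pairs the increment of $c_i$ against the \emph{unweighted} increment of $\mu_i$, for which no estimate is available: Lemma~\ref{lem:c-grad-mu} only controls $\sum\tau_\sig c_{i,\sig}^n(\mu_{i,K}^n-\mu_{i,L}^n)^2$. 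Equivalently, in your cell-based formulation, the estimate $|c_{i,\sig}^n-\tilde c_{i,\sig}^n|\le|c_{i,K}^n-c_{i,L}^n|$ together with Corollary~\ref{coro:LinfH1} only shows that the residue function is \emph{bounded} in $L^2(Q_T)$, with no smallness in $h_{\Tt_m}$; a bounded sequence paired with the merely weakly convergent $\mu_{i,\Tt_m,\dt_m}$ does not vanish in the limit, so the weak--strong argument breaks down exactly here.

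The repair requires three ingredients that your proposal never invokes: the weighted Cauchy--Schwarz inequality that keeps $\sqrt{c_{i,\sig}^n}$ attached to $(\mu_{i,K}^n-\mu_{i,L}^n)$, the identity $(c_{i,K}^n-c_{i,L}^n)^2/c_{i,\sig}^n=(c_{i,K}^n-c_{i,L}^n)(\log c_{i,K}^n-\log c_{i,L}^n)$ which is \emph{specific to the logarithmic mean}, and the entropy dissipation estimate of Lemma~\ref{lem:entropy}, which bounds the resulting sum by $\cter{FI}/(\eta_i\theta_{i,\Tt})\le C/(\rho h_{\Tt_m})$ thanks to the numerical diffusion floor $\theta_{i,\Tt}\ge\rho h_{\Tt_m}$; the geometric prefactor $m_\sig d_\sig\le 4h_{\Tt_m}^2\tau_\sig$ then yields a residue of order $h_{\Tt_m}^{1/2}$. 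This is precisely the point singled out in Remark~\ref{rmk:SG}: with another admissible edge mobility (Scharfetter--Gummel, upwind) this term cannot be controlled and consistency may fail, so no argument bypassing the entropy estimate can work. A secondary slip: super-admissibility (Definition~\ref{def:mesh}\eqref{it:superadm}) places $\x_\sig$ at the midpoint of the \emph{edge} $\sig$, not of the segment $[\x_K,\x_L]$, so $d_{K\sig}=d_\sig/2$ is false in general; this is why the paper works with the weighted mean $\ov c_{i,\sig}^n=(d_{K\sig}c_{i,L}^n+d_{L\sig}c_{i,K}^n)/d_\sig$ rather than the plain arithmetic mean when matching \eqref{eq:grad_int_K}.
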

\begin{proof}
Since $m_\sig d_\sig =  2 m_{D_\sig}$ and since $0 \leq c_{i,\sig}^n \leq 1$, it results from Lemma~\ref{lem:c-grad-mu}  that 
\[
\|\bV_{i,\Dd_m, \dt_m}\|_{L^2(Q_T)^2}^2 = 2 \sum_{n=1}^{N_m} \dt \sum_{\substack{\sig \in \Ee_{{\rm int},m}\\\sig = K|L}} \tau_\sig 
\left(c_{i,\sig}^n\right)^2 \left(\mu_{i,K}^n - \mu_{i,L}^n\right)^2 \leq C.
\]
Therefore, up to a subsequence, $\bV_{i,\Dd_m, \dt_m}$ converges weakly in $L^2(Q_T)^2$ towards some $\bV_i$. Let us
identify $\bV_i$ as $-c_i \grad \mu_i$. To this end, we introduce an arbitrary smooth vector field $\bPhi \in C^\infty_c(Q_T)^2$, and, for all $m\geq 1$, 
we denote by 
\[
\bPhi_K^n = \bPhi(t^n,\x_K), \qquad 
\bPhi_{\sig}^n = \frac1{m_\sig} \int_\sig \bPhi(t^n,\x)\d\x, \qquad \forall K \in \Tt_m, \; \forall \sig \in \Ee_{{\rm int},m}, \; \forall n \in \{1,\dots, N_m\}, 
\]
and by 
\begin{align*}
\bPhi_{\Tt_m, \dt_m}(t,\x) &= \bPhi_K^n  \quad \text{if}\; (t,\x) \in (t^{n-1}, t^n]\times K, \\
\bPhi_{\Dd_m, \dt_m}(t,\x) 
&= \begin{cases}
\bPhi_\sig^n & \text{if}\; (t,\x) \in (t^{n-1}, t^n]\times D_\sig, \sig \in \Ee_{{\rm int},m}, \\
0  & \text{if}\; (t,\x) \in (t^{n-1}, t^n]\times \left(K \setminus \bigcup_{\sig \in \Ee_{K,\rm int}} D_{K,\sig}\right),
\end{cases}
\end{align*}
for almost all $(t,\x) \in Q_T$. Thanks to the regularity of $\bPhi$, it is easy to check that both $\bPhi_{\Tt_m, \dt_m}$ and $\bPhi_{\Dd_m, \dt_m}$ converge uniformly 
towards $\bPhi$ as $m$ tends to $+\infty$. This implies in particular that 
\[
B_{i,m}(\bPhi) = \iint_{Q_T} \bV_{i,\Dd_m, \dt_m}(t,\x)\cdot \bPhi_{\Dd_m, \dt_m} \d\x\d t \underset{m\to\infty}\longrightarrow \iint_{Q_T} \bV_i\cdot \bPhi\, \d\x\d t.
\]
On the other hand, $B_{i,m}(\bPhi)$ can be decomposed into 
\be\label{eq:Bim}
B_{i,m}(\bPhi)  = B_{i,m}^{(1)}(\bPhi) + B_{i,m}^{(2)}(\bPhi) + B_{i,m}^{(3)}(\bPhi) + B_{i,m}^{(4)}(\bPhi), 
\ee
where, denoting by 
\[
\ov c_{i,\sig}^n = \frac{d_{K,\sig} c_{i,L}^n + d_{L,\sig} c_{i,K}^n}{d_\sig}, \qquad \forall \sig = K|L \in \Ee_{{\rm int},m}, \, \forall n \in \{1,\dots, N_m\},
\]
we have set 
\begin{align*}
B_{i,m}^{(1)}(\bPhi) = & \sum_{n=1}^{N_m} \dt_m \sum_{\substack{\sig \in \Ee_{{\rm int},m}\\\sig = K|L}} 
m_\sig (c_{i,\sig}^n - \ov c_{i,\sig}^n) (\mu_{i,K}^n - \mu_{i,L}^n) \bPhi_{\sig}^n \cdot \n_{K\sig}, \\
B_{i,m}^{(2)}(\bPhi) = & \sum_{n=1}^{N_m} \dt_m \sum_{K\in\Tt_m} \mu_{i,K}^n c_{i,K}^n \sum_{\sig \in \Ee_{K,\rm int}} 
m_\sig \bPhi_\sig^n \cdot \n_{K\sig},
\\
B_{i,m}^{(3)}(\bPhi) = &\sum_{n=1}^{N_m} \dt_m \sum_{K\in\Tt_m} m_K \mu_{i,K}^n \bPhi_{K}^n \cdot \left[\frac1{m_K}\sum_{\sig \in \Ee_{K,\rm int}} 
m_\sig (\ov c_{i,\sig}^n - c_{i,K}^n) \n_{K\sig}\right],
\\
B_{i,m}^{(4)}(\bPhi) = &\sum_{n=1}^{N_m} \dt_m \sum_{K\in\Tt_m} \mu_{i,K}^n  \sum_{\sig \in \Ee_{K,\rm int}} 
m_\sig (\ov c_{i,\sig}^n - c_{i,K}^n) (\bPhi_\sig^n - \bPhi_K^n) \cdot \n_{K\sig}.
\end{align*}
Let us first focus on $B_{i,m}^{(1)}(\bPhi)$, which can be controled as follows 
thanks to Cauchy-Schwarz inequality and the fact that $d_\sig \leq 2 h_\Tt$:
\begin{multline*}
\left| B_{i,m}^{(1)}(\bPhi) \right|^2 \leq 4 h_\Tt^2 {\|\bPhi\|}_{\infty}^2
\left(\sum_{n=1}^{N_m} \dt_m \sum_{\substack{\sig \in \Ee_{{\rm int},m}\\\sig = K|L}} 
\tau_\sig \frac{(c_{i,\sig}^n - \ov c_{i,\sig}^n)^2}{c_{i,\sig}^n}\right)\\
\times \left(\sum_{n=1}^{N_m} \dt_m \sum_{\substack{\sig \in \Ee_{{\rm int},m}\\\sig = K|L}} 
\tau_\sig c_{i,\sig}^n  \left(\mu_{i,K}^n - \mu_{i,L}^n\right)^2\right).
\end{multline*}
The second sum in the right-hand side is uniformly bounded thanks to Lemma~\ref{lem:c-grad-mu}, whereas since $|c_{i,\sig}^n - \ov c_{i,\sig}^n| \leq |c_{i,K}^n-c_{i,L}^n|$, 
Lemma~\ref{lem:entropy}  and the particular definition~\eqref{eq:c_isig} of $c_{i,\sig}^n$ ensure that 
\[
\sum_{n=1}^{N_m} \dt_m \sum_{\substack{\sig \in \Ee_{{\rm int},m}\\\sig = K|L}} 
\tau_\sig \frac{(c_{i,\sig}^n - \ov c_{i,\sig}^n)^2}{c_{i,\sig}^n} \leq \sum_{n=1}^{N_m} \dt_m \sum_{\substack{\sig \in \Ee_{{\rm int},m}\\\sig = K|L}} 
\tau_\sig \left(c_{i,K}^n - c_{i,L}^n\right)\left(\log(c_{i,K}^n) - \log(c_{i,L}^n)\right) \leq \frac{\cter{FI}}{\eta_i\theta_{i,\Tt}}.
\]
Since $\theta_{i,\Tt} \geq \rho h_{\Tt}$, we finally obtain that 
\be\label{eq:Bim1}
\left| B_{i,m}^{(1)}(\bPhi) \right|^2 \leq C h_\Tt \underset{m\to+\infty}\longrightarrow 0, \qquad \forall \bPhi \in C^\infty_c(Q_T).
\ee
Let us now consider $B_{i,m}^{(2)}(\bPhi)$. To this end, remark first that the definition of $\bPhi_\sig^n$ implies that 
\[
\sum_{\sig \in \Ee_{K}} m_\sig \bPhi_\sig^n \cdot \n_{K\sig} 
= \int_K \div \bPhi(t^n,\x) \d\x, \qquad \forall K \in \Tt_m, \; \forall n \in \{1,\dots, N_m\}.
\]
As a consequence, since $\mu_{i,\Tt_m, \dt_m}$ converges weakly towards $\mu_i$ and $c_{i,\Tt_m, \dt_m}$ converges strongly towards $c_i$
in $L^2(Q_T)$, we conclude that 
\be\label{eq:Bim2}
B_{i,m}^{(2)}(\bPhi)  \underset{m\to+\infty}\longrightarrow \iint_{Q_T} \mu_i c_i \div\bPhi\, \d\x \d t, \qquad \forall \bPhi \in C^\infty_c(Q_T).
\ee
Thanks to~\eqref{eq:grad_int_K}, the term $B_{i,m}^{(3)}(\bPhi)$ can be rewritten as 
\[
B_{i,m}^{(3)}(\bPhi) = \iint_{Q_T} \mu_{i,\Tt_m,\dt_m} \grad_{\Tt_m} c_{i,\Tt_m,\dt_m} \cdot \bPhi_{\Tt_m, \dt_m} \d\x\d t.
\]
The strong convergence of $\grad_{\Tt_m} c_{i,\Tt_m,\dt_m}$ towards $\grad c_i$ in $L^2(Q_T)^2$, the weak convergence of $\mu_{i,\Tt_m,\dt_m}$ towards 
$\mu_i$ and the uniform convergence of $\bPhi_{\Tt_m, \dt_m}$ towards $\bPhi$ yield
\be\label{eq:Bim3}
B_{i,m}^{(3)}(\bPhi)  \underset{m\to+\infty}\longrightarrow \iint_{Q_T} \mu_i \grad c_i\cdot \bPhi\, \d\x \d t, \qquad \forall \bPhi \in C^\infty_c(Q_T).
\ee
Introducing the quantities 
\[
r_{i,K}^n =\frac1{m_K}\sum_{\sig \in \Ee_K} m_\sig (\ov c_{i,\sig}^n - c_{i,K}^n) \left(\bPhi_\sig^n - \bPhi_K^n\right) \cdot \n_{K\sig}, 
\qquad \forall K \in \Tt_m, \, \forall n \in \{1,\dots, N_m\},
\]
and the corresponding functions $r_{i,\Tt_m, \dt_m}$ in $\Xx_{\Tt_m,\dt_m}$, the term $B_{i,m}^{(4)}(\bPhi)$ rewrites 
\[
B_{i,m}^{(4)}(\bPhi) = \iint_{Q_T} \mu_{i,\Tt_m, \dt_m} r_{i,\Tt_m, \dt_m} \d\x \d t.
\]
Since $\mu_{i,\Tt_m, \dt_m}$ is uniformly bounded in $L^2(Q_T)$, proving that $r_{i,\Tt_m, \dt_m}$ tends to $0$ in $L^2(Q_T)$ 
is enough to show that 
\be\label{eq:Bim4}
B_{i,m}^{(4)}(\bPhi) \underset{m\to+\infty}\longrightarrow 0, \qquad \forall \bPhi \in C^\infty_c(Q_T).
\ee
Thanks to the regularity of the mesh $\Tt_m$, and more precisely to \eqref{eq:voisins}, there holds 
\[
|r_{i,K}^n|^2 \leq \frac{\ell^\star}{(m_K)^2} \sum_{\sig \in\Ee_{K,\rm int}} (m_\sig)^2 (\ov c_{i,\sig}^n - c_{i,K}^n)^2 |\bPhi_\sig^n - \bPhi_K^n|^2
\leq \ell^\star \|D\bPhi\|_\infty^2 \frac{(h_K)^4}{(m_K)^2}\sum_{\sig \in\Ee_{K,\rm int}} \tau_{K\sig} (\ov c_{i,\sig}^n - c_{i,K}^n)^2.
\]
Using the regularity of the mesh~\eqref{eq:Ciarlet} and estimate 
\[
\sum_{n=1}^{N_m} \dt_m \sum_{K\in\Tt_m} \sum_{\sig \in \Ee_{K,\rm int}} \tau_{K\sig}  (\ov c_{i,\sig}^n - c_{i,K}^n)^2 = 
\sum_{n=1}^{N_m} \dt_m \sum_{\sig\in\Ee_{{\rm int},m}} \tau_{\sig} (c_{i,K}^n - c_{i,L}^n)^2 \leq T \cter{LinfH1}, 
\]
one gets that 
\[
\left\|r_{i,\Tt_m, \dt_m}\right\|_{L^2(Q_T)} \leq C h_{\Tt_m} \underset{m\to+\infty}\longrightarrow 0, 
\]
so that~\eqref{eq:Bim4}  holds.
Finally, we deduce from~\eqref{eq:Bim}--\eqref{eq:Bim4} that $\bV_i = -c_i \grad \mu_i$ in the distributional sense, hence 
also in $L^2(Q_T)^2$.
\end{proof}

We have now all the necessary material to conclude the proof of Theorem~\ref{main:2}. This is the purpose of our last lemma. 
\begin{lem}\label{lem:conv_ci}
The limit values $(c_i,\mu_i)$ as $m$ tends to $+\infty$ of $(c_{i,\Tt_m,\dt_m},\mu_{i,\Tt_m,\dt_m})$ satisfy the weak formulations~\eqref{eq:weak_ci} 
for $i\in\{1,2\}$.
\end{lem}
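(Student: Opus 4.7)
The plan is to test the discrete equation~\eqref{eq:scheme_ci} against $\varphi_K^{n-1} := \varphi(t^{n-1},\x_K)$ for an arbitrary $\varphi\in C^\infty_c([0,T)\times\ov\O)$: multiply by $\dt_m\,\varphi_K^{n-1}$, sum over $K\in\Tt_m$ and $n\in\{1,\dots,N_m\}$, and then send $m\to\infty$. The identity so obtained decomposes into a discrete time-derivative term $T_1^m$, a chemical-potential flux term $T_2^m$, an external-potential flux term $T_3^m$, and a thermal diffusion term $T_4^m$, and we must identify each limit with the corresponding integral in~\eqref{eq:weak_ci}.

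For $T_1^m$, Abel summation in time transfers the finite difference onto $\varphi$ and produces the initial-data contribution. Since $\varphi$ is smooth and compactly supported in $[0,T)\times\ov\O$, the piecewise-constant reconstruction of $\partial_t\varphi$ converges uniformly; combined with the a.e.\ and $L^2$ convergence of $c_{i,\Tt_m,\dt_m}$ from Lemma~\ref{lem:compact1}, this yields the limit $-\iint_{Q_T} c_i\,\partial_t\varphi\,\d\x\d t - \int_\O c_i^0\,\varphi(0,\cdot)\,\d\x$. For $T_2^m$, the edges in $\Ee_{K,\rm int}$ pair up by antisymmetry, so the sum rewrites as a sum over interior edges with the factor $\varphi_K^{n-1}-\varphi_L^{n-1}$. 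Recognizing this expression as an integral of $\bV_{i,\Dd_m,\dt_m}$ (defined in~\eqref{eq:bV1}--\eqref{eq:bV2}) against the piecewise-constant vector field $\bPhi_{\Dd_m,\dt_m}^\varphi$ built from $\tfrac{1}{d_\sig}(\varphi_L^{n-1}-\varphi_K^{n-1})\n_{KL}$ on each diamond (so that, by Taylor expansion and the super-admissibility from Definition~\ref{def:mesh}\eqref{it:superadm}, $\bPhi_{\Dd_m,\dt_m}^\varphi$ converges uniformly to $\grad\varphi$), the weak convergence $\bV_{i,\Dd_m,\dt_m}\rightharpoonup -c_i\grad\mu_i$ from Lemma~\ref{lem:conv_cigradmu} delivers $T_2^m\to \iint_{Q_T}\tfrac{c_i}{\eta_i}\grad\mu_i\cdot\grad\varphi\,\d\x\d t$.

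The term $T_3^m$ is handled analogously: reorganization over edges expresses it through the SUSHI gradient of the potential $\Psi_{i,\Tt_m}$ (weakly convergent to $\grad\Psi_i$ by Corollary~\ref{coro:weak_grad}), paired with the edge coefficients $c_{i,\sig}^n$, whose strong $L^2$ convergence to $c_i$ follows from Lemma~\ref{lem:compact1} combined with $|c_{i,\sig}^n-c_{i,K}^n|\le|c_{i,K}^n-c_{i,L}^n|$ and the vanishing of $h_{\Tt_m}$. For $T_4^m$, the SUSHI gradient formulation of Proposition~\ref{prop:sushi}(i) rewrites the sum as $\theta_{i,\Tt_m}\iint_{Q_T}\grad_{\Tt_m} c_{i,\Tt_m,\dt_m}\cdot \bPhi^\varphi_{\Tt_m,\dt_m}\d\x\d t$ (up to a controlled remainder), and since $\theta_{i,\Tt_m}\to\theta_i$ (equal once $h_{\Tt_m}<\theta_i/\rho$, otherwise of order $h_{\Tt_m}$), the strong convergence of $\grad_{\Tt_m} c_{i,\Tt_m,\dt_m}$ from Lemma~\ref{lem:strong_grad} provides the limit $\theta_i\iint_{Q_T}\grad c_i\cdot\grad\varphi\,\d\x\d t$. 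The main obstacle is the second term $T_2^m$: the edge-weighted flux $c_{i,\sig}^n(\mu_{i,K}^n-\mu_{i,L}^n)$ is controlled only in the weighted norm of Lemma~\ref{lem:c-grad-mu}, so the identification is available only as the weak $L^2$ limit of $\bV_{i,\Dd_m,\dt_m}$; this forces us to pair it against a test field that converges \emph{uniformly} on the diamonds, which is precisely where the super-admissibility assumption is crucial to ensure $\bPhi_{\Dd_m,\dt_m}^\varphi\to\grad\varphi$ without spurious consistency errors.
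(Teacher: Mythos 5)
Your overall strategy coincides with the paper's: the same test function $\varphi_K^{n-1}$, the same four-term decomposition, the time term and the diffusion term handled classically, and the chemical-potential flux identified through Lemma~\ref{lem:conv_cigradmu}. There is, however, a genuine flaw in your justification of the consistency of the flux terms. The piecewise-constant field $\bPhi^\varphi_{\Dd_m,\dt_m}$ equal to $\tfrac{1}{d_\sig}(\varphi_L^{n-1}-\varphi_K^{n-1})\,\n_{KL}$ on the diamond $D_\sig$ does \emph{not} converge uniformly (nor strongly in $L^2$) to $\grad\varphi$: it converges to $(\grad\varphi\cdot\n_{KL})\,\n_{KL}$, i.e.\ only the component of the gradient along $\x_L-\x_K$ is reproduced and the tangential component is entirely lost. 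This is the classical defect of two-point reconstructions; the best one can say, after inserting the dimensional factor $2$ as in \eqref{eq:bV1} and \eqref{eq:bW1}, is \emph{weak} $L^2$ convergence towards $\grad\varphi$ (this is the result of \cite{CHLP03, EG03} invoked for $\bW_{i,\Dd_m,\dt_m}$), and a weak-times-weak pairing with $\bV_{i,\Dd_m,\dt_m}$ would not pass to the limit. The step is repairable, and the paper's proof shows how: because $\bV_{i,\sig}^n$ is itself parallel to $\n_{KL}$, only the \emph{normal} component of the test field needs to be consistent, so one replaces the exact discrete identity $T_2^m=-\tfrac1{\eta_i}\iint_{Q_T}\bV_{i,\Dd_m,\dt_m}\cdot\bPhi^\varphi_{\Dd_m,\dt_m}\,\d\x\d t$ by $-\tfrac1{\eta_i}\iint_{Q_T}\bV_{i,\Dd_m,\dt_m}\cdot\grad\varphi\,\d\x\d t$ up to the remainder controlled by the Taylor estimate \eqref{eq:Taylor}, shows that this remainder vanishes using Lemma~\ref{lem:c-grad-mu} and the $L^2$ bound on $\bV_{i,\Dd_m,\dt_m}$, and then uses only the weak convergence of $\bV_{i,\Dd_m,\dt_m}$. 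The same remark applies to $T_3^m$, where in addition the edge weight $c_{i,\sig}^n$ prevents a direct appeal to the SUSHI inner-product formula of Proposition~\ref{prop:sushi}(i); the paper instead pairs the weakly convergent diamond field $\bW_{i,\Dd_m,\dt_m}$ with the strongly $L^2$-convergent diamond reconstruction $c_{i,\Dd_m,\dt_m}$, whose convergence rests on Corollary~\ref{coro:LinfH1} (not on Lemma~\ref{lem:compact1} alone).

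Relatedly, your closing claim that super-admissibility is precisely what guarantees $\bPhi^\varphi_{\Dd_m,\dt_m}\to\grad\varphi$ is a misattribution: no midpoint condition can restore the missing tangential component, and the first-order consistency of the normal difference quotient in \eqref{eq:Taylor} already follows from the orthogonality condition \eqref{it:ortho} together with the regularity assumptions \eqref{eq:reg1}--\eqref{eq:tau_star}. Super-admissibility enters the convergence proof elsewhere, through the construction of the strongly consistent SUSHI gradient of Proposition~\ref{prop:sushi}, on which Lemma~\ref{lem:strong_grad} and the term $B^{(3)}_{i,m}$ in the proof of Lemma~\ref{lem:conv_cigradmu} rely.
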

\begin{proof}
As a preliminary, let us first show that the functions $c_{i,\Dd_m,\dt_m}$ defined by 
\[
c_{i,\Dd_m,\dt_m}(t,\x) 
= \left\{ 
\begin{aligned}
c_{i,\sig}^n & \text{if}\; (t,\x) \in (t^{n-1},t^n]\times D_\sig, \sig \in \Ee_{{\rm int},m}, \\
c_{i,K}^n & \text{if}\; (t,\x) \in (t^{n-1},t^n]\times \left(K \setminus \bigcup_{\sig \in \Ee_{K,\rm int}} D_{K,\sig}\right),
\end{aligned}
\right.
\]
converges strongly in $L^2(Q_T)$ towards $c_i$. Indeed, one has 
\begin{align*}
\left\| c_{i,\Dd_m,\dt_m} - c_{i,\Tt_m,\dt_m} \right\|_{L^2(Q_T)}^2 
=& \sum_{n=1}^{N_m} \dt_n \sum_{K\in\Tt_m} 
\sum_{\sig \in \Ee_{K,\rm int}} m_{D_{K,\sig}} \left(c_{i,K}^n - c_{i,\sig}^n\right)^2 \\
\leq& \sum_{n=1}^{N_m} \dt_n 
\sum_{\substack{\sig \in \Ee_{{\rm int},m}\\ \sig = K|L}} m_{D_\sig} \left(c_{i,K}^n - c_{i,L}^n\right)^2
\leq \frac{T\cter{LinfH1}}{2}(h_{\Tt_m})^2
\underset{m\to\infty}\longrightarrow 0.
\end{align*}
Since $c_{i,\Tt_m,\dt_m}$ converges in $L^2(Q_T)$ towards $c_i$ as $m$ tends to $\infty$, then so does $c_{i,\Dd_m,\dt_m}$.

Let $\varphi \in C^\infty_c([0,T)\times\ov \O)$, then denote by $\varphi_K^n = \varphi(t^n,\x_K)$ for all $K \in \Tt_m$ 
and all $n \in \{0,\dots, N_m\}$, $m\geq 1$. Note that $\varphi_K^{N_m} =  0$ for all $K \in \Tt_m$. 
Multiplying~\eqref{eq:scheme_ci}  by $\dt_m \varphi_K^{n-1}$ and summing over $K\in\Tt$ and $n\in\{1,\dots, N_m\}$ leads to 
\be\label{eq:conv_ABCD}
A_{i,m} + B_{i,m} + C_{i,m} + D_{i,m} = 0, 
\ee
where we have set 
\begin{align*}
A_{i,m} = & \sum_{n=1}^{N_m} \sum_{K\in\Tt_m} m_K c_{i,K}^n (\varphi_{K}^{n-1} - \varphi_{K}^n) - \sum_{K\in\Tt_m} m_K c_{i,K}^0 \varphi_K^0,\\
B_{i,m} = & \frac1{\eta_i}\sum_{n=1}^{N_m} \dt_m 
\sum_{\substack{\sig \in \Ee_{{\rm int},m}\\\sig=K|L}} \tau_\sig c_{i,\sig}^n \left(\mu_{i,K}^n - \mu_{i,L}^n\right)
 \left(\varphi_{K}^{n-1} - \varphi_{L}^{n-1}\right),\\
C_{i,m} = & \frac1{\eta_i}\sum_{n=1}^{N_m} \dt_m 
\sum_{\substack{\sig \in \Ee_{{\rm int},m}\\\sig=K|L}} \tau_\sig c_{i,\sig}^n\left(\Psi_{i,K} - \Psi_{i,L}\right)
 \left(\varphi_{K}^{n-1} - \varphi_{L}^{n-1}\right) ,\\
D_{i,m} = & \theta_{i,\Tt_m} \sum_{n=1}^{N_m} \dt_m
\sum_{\substack{\sig \in \Ee_{{\rm int},m}\\\sig=K|L}}\tau_\sig (c_{i,K}^n - c_{i,L}^n) \left(\varphi_{K}^{n-1} - \varphi_{L}^{n-1}\right).
\end{align*}
Classical arguments (see for instance~\cite{EGH00}) allow to show that 
\be\label{eq:conv_Aim}
A_{i,m} \underset{m\to\infty}\longrightarrow - \iint_{Q_T} c_i \p_t \varphi \d\x \d t - \int_\O c_i^0 \varphi(0,\cdot)\d\x,
\ee
and, since $\theta_{i,\Tt_m}$ tends to $\theta_i$, that 
\be\label{eq:conv_Dim}
D_{i,m} \underset{m\to\infty}\longrightarrow\theta_i \iint_{Q_T} \grad c_i \cdot \grad  \varphi\, \d\x \d t.
\ee

Using Taylor expansions, one shows that 
\be\label{eq:Taylor}
\left|\frac{\varphi_{K}^n-\varphi_L^n}{d_\sig} + \frac{1}{m_{D_\sig}} \int_{D_\sig} \grad \varphi(t^n,\x)\cdot \n_{KL} \d\x \right| \leq C d_\sig, \qquad \forall \sig = K|L \in \Ee_{{\rm int},m}.
\ee
Therefore, 
\[
B_{i,m} = -\frac{1}{\eta_i}\iint_{Q_T} \bV_{i,\Dd_m,\dt_m} \cdot \grad \varphi \d\x \d t + B'_{i,m}
\]
with 
\begin{align*}
\left| B'_{i,m} \right| 
\leq &\frac{C}{\eta_i} h_{\Tt_m} \sum_{n=1}^{N_m} \dt_m
\sum_{\substack{\sig \in \Ee_{{\rm int},m}\\\sig=K|L}} m_\sig c_{i,\sig}^n |\mu_{i,K}^n - \mu_{i,L}^n| \\
&+\frac1{\eta_i}\left|\sum_{n=1}^{N_m} \int_{t^{n-1}}^{t^n} \int_\O \left(\grad \varphi(t,\x)-\grad \varphi(t^{n-1},\x)\right)\cdot\bV_{i,\Dd_m,\dt_m}\d\x\d t\right|.
\end{align*}
Cauchy-Schwarz inequality together with Lemma~\ref{lem:c-grad-mu}, the regularity of $\varphi$ and the $L^2(Q_T)$ bound of $\bV_{i,\Dd_m,\dt_m}$ show that $B'_{i,m}$ tends to $0$ as $m$ tends to $+\infty$, while Lemma~\ref{lem:conv_cigradmu} ensures that 
\be\label{eq:conv_Bim}
\lim_{m\to\infty} B_{i,m} = 
 \iint_{Q_T} \frac{c_i}{\eta_i} \grad \mu_i \cdot \grad \varphi\, \d\x\d t.
\ee

Let us focus on the term $C_{i,m}$. Define the vectors
\be\label{eq:bW1}
\bW_{i,\sig}^n =  2 \frac{\Psi_{i,K} - \Psi_{i,L}}{d_\sig} \n_{KL}, \qquad \forall \sig = K|L \in \Ee_{{\rm int},m}, \; \forall n \in \{1,\dots,N_m\},
\ee
and the corresponding piecewise constant vector field
\be\label{eq:bW2}
\bW_{i,\Dd_m, \dt_m}(t,\x) =\bW_{i,\sig}^n \quad\text{if}\; (t,\x) \in (t^{n-1}, t^n] \times D_\sig, \; \sig \in \Ee_{{\rm int},m}, 
\ee
then it is shown in~\cite{CHLP03, EG03} that $\bW_{i,\Dd_m, \dt_m}$ converges weakly in $L^2(Q_T)$ towards $-\grad \Psi_i$. 
Therefore, $c_{i,\Dd_m,\dt_m} \bW_{i,\Dd_m, \dt_m}$ converges weakly in $L^p(Q_T)$ towards  $-c_i \grad \Psi_i$ for all $p < 2$. 
Proceeding as for $B_{i,m}$, one shows that 
\be\label{eq:conv_Cim}
\lim_{m\to\infty} C_{i,m} = 
 \iint_{Q_T} \frac{c_i}{\eta_i} \grad \Psi_i \cdot \grad \varphi\, \d\x\d t.
\ee
Combining ~\eqref{eq:conv_Aim}--\eqref{eq:conv_Cim} in \eqref{eq:conv_ABCD} provides that the limits $c_i, \mu_i$ as $m$ tends 
to $\infty$ of $c_{i,\Tt_m,\dt_m}, \mu_{i,\Tt_m,\dt_m}$ fulfil the weak formulation~\eqref{eq:weak_ci}.
\end{proof}

\begin{rem}\label{rmk:SG}
A natural way to discretize~\eqref{eq:cont_ci} would have been to use a Scharfetter-Gummel scheme~\cite{SG69} 
in~\eqref{eq:scheme_ci}. This scheme degenerates into the upstream mobility scheme proposed in~\cite{CN_FVCA8} in the deep quench limit $\theta_{i,\Tt} = 0$. 
Almost all our analysis can be adapted to this scheme excepted Lemma~\ref{lem:conv_cigradmu}. More precisely, 
we are not able to prove that the term $B_{i,m}^{(1)}(\bPhi)$ appearing in the proof of Lemma~\ref{lem:conv_cigradmu} 
tends to $0$ as $m$ tends to $+\infty$, which possibly breaks the consistency of the scheme. 
\end{rem}

\section{Numerical results}\label{sec:num}

In this section, we present different simulations to illustrate the behavior of the finite-volume scheme presented in Section~\ref{sec:scheme}. To solve this nonlinear system we use a Newton-Raphson based iterative method. 
More precisely, the unknowns $\left(c_{2,K}^n\right)_{K\in\Tt}$ are eliminated thanks to the relation~\eqref{eq:scheme_c1+c2}, 
so that the nonlinear system to be solved at each time step involves 3 unknowns $c_{1,K}^n, \mu_{1,K}^n$ and $\mu_{2,K}^n$ 
per cell $K\in\Tt$. The iterative method stops as soon as the $\ell_2$ norm of the Newton increment is smaller than $10^{-6}$. 
The updated concentration variables are projected on the set 
$[\epsilon, 1-\epsilon]^\Tt$, with $\epsilon = 10^{-10}$, which is reasonable in view of Lemma~\ref{lem:positivity}.

In each case the domain $\O$ is the square $(0,1)^2$. The mesh is made of 23330 conforming triangles. The mesh size is approximately equal to $0.017$ and the time step is fixed to $\dt=10^{-4}$.
We choose as parameters $\alpha=0.0002$, $\kappa=1.45$, $\theta_1=\theta_2=0.35$, $\rho=1$ and $\nu_1=\nu_2=1$.
We plot the concentration $c_1$ and we can observe in blue the concentration $c_1=0$, in red $c_1=1$ and in white $c_1=0.5$.

First we consider the spinodal decomposition test case.
The initial saturation $c_1^0$ is a random initial concentration with a 
fluctuation, that is $c_1^0(\x) = 0.5 + r(\x)$ where $r \ll 1$ is a small random perturbation. 
We compare the case without any external potential, that is $\Psi_1=\Psi_2=0$, in Fig.~\ref{fig:test_decomp_spin} with the case
 where the external potential are given by $\Psi_i(\x)=-\rho_i \bg\cdot \x$ where the gravity is $\bg=-0.98 \bee_y$ and the densities $\rho_1=5$ and $\rho_2=1$ in Fig.~\ref{fig:test_decomp_spin_psi}.
Note that in both cases we have exactly the same initial data.
We want to observe the influence of the gravity on the phase separation dynamics.
\begin{figure}[htbp!]
\centering
 \subfloat[$t=0.005$]{
 \includegraphics[trim=5cm 3cm 1cm 3cm, clip,scale=0.12]
  {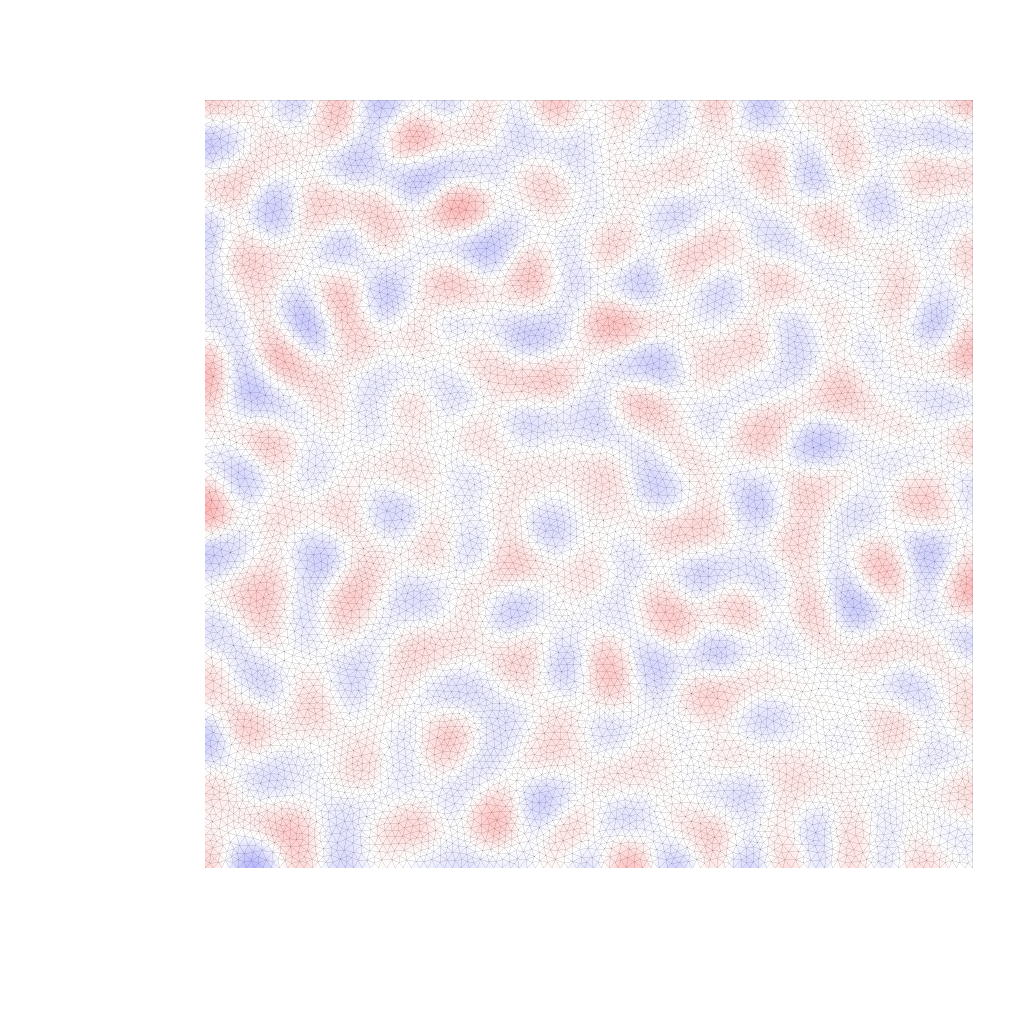}\label{fig:Decomp_spin_t0_005}}\hfil
\subfloat[$t=0.01$]{
 \includegraphics[trim=5cm 3cm 1cm 3cm, clip,scale=0.12]
  {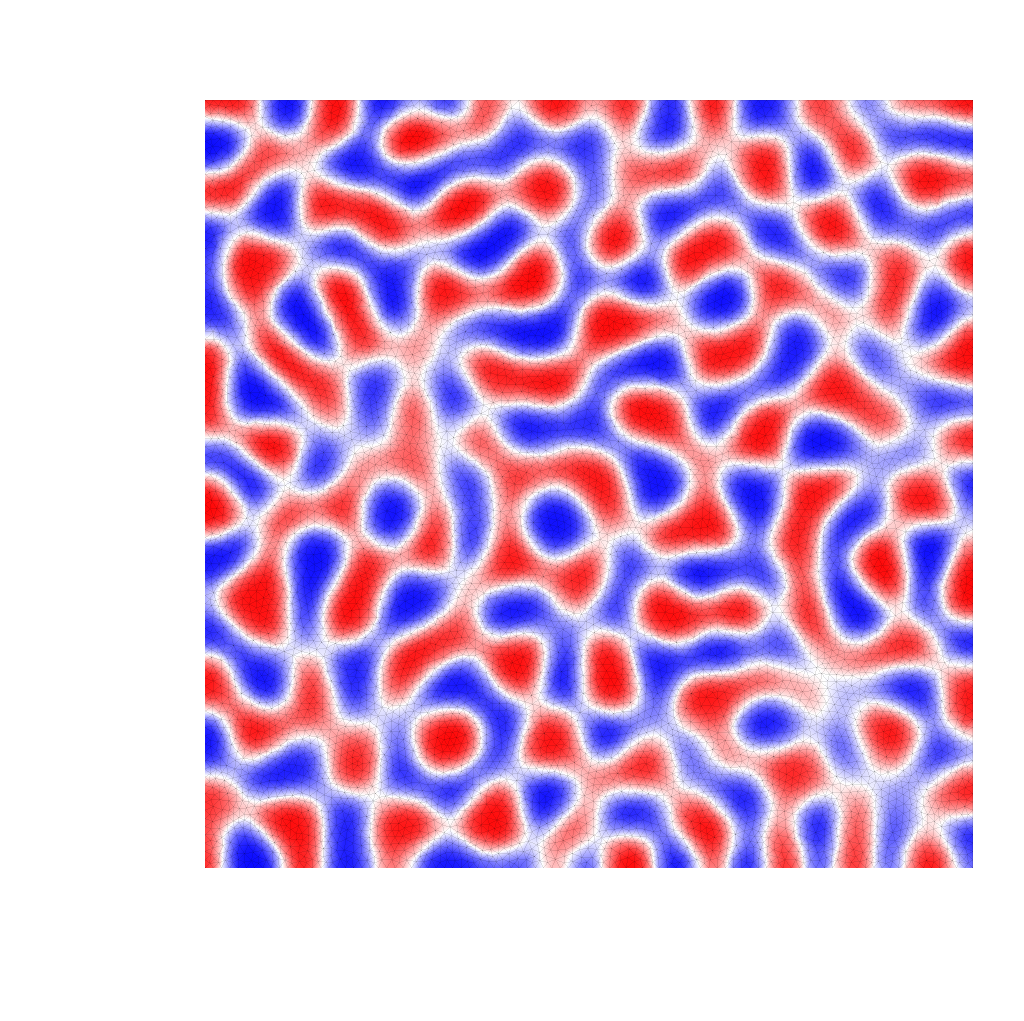}\label{fig:Decomp_spin_t0_01}}\hfil
    \subfloat[$t=0.02$]{
  \includegraphics[trim=5cm 3cm 1cm 3cm, clip,scale=0.12]
   {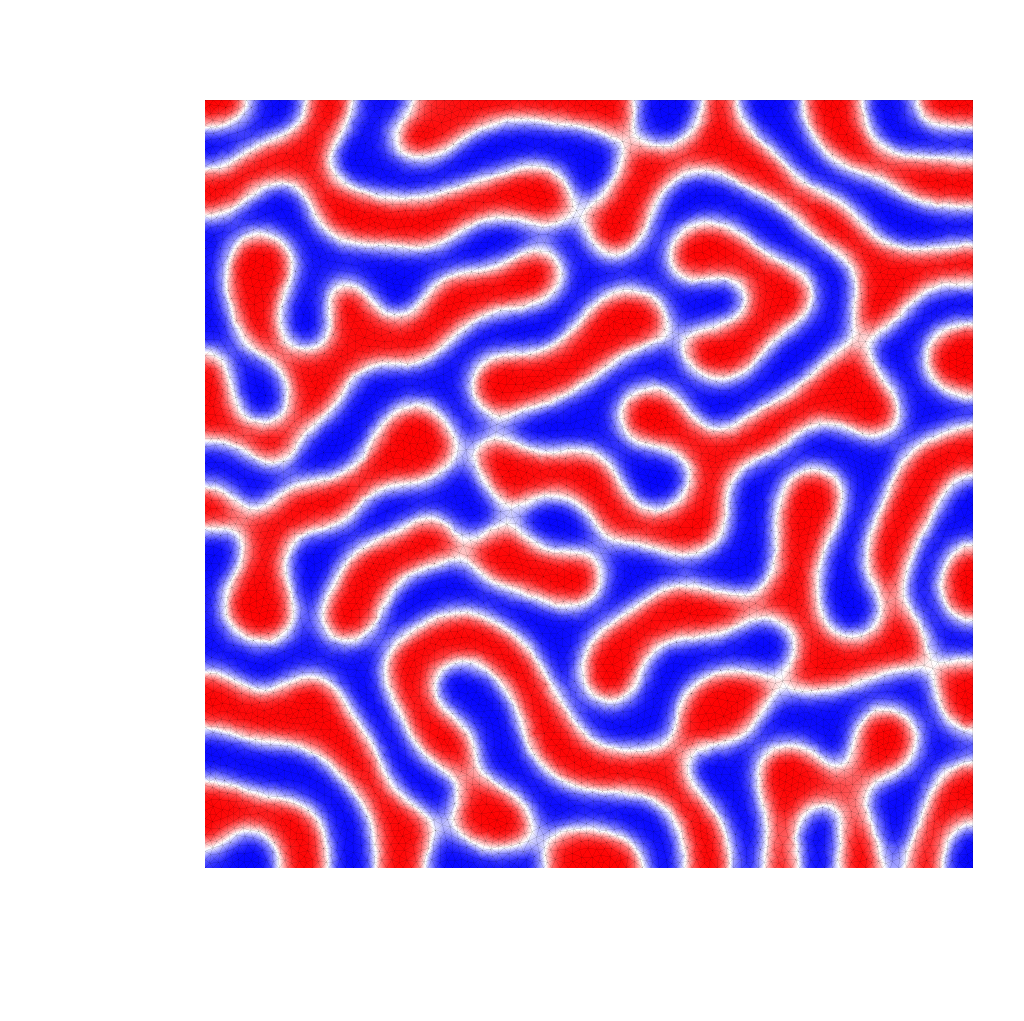}\label{fig:Decomp_spin_t0_02}}\hfil
  \subfloat[$t=0.2$]{
 \includegraphics[trim=5cm 3cm 1cm 3cm, clip,scale=0.12]
  {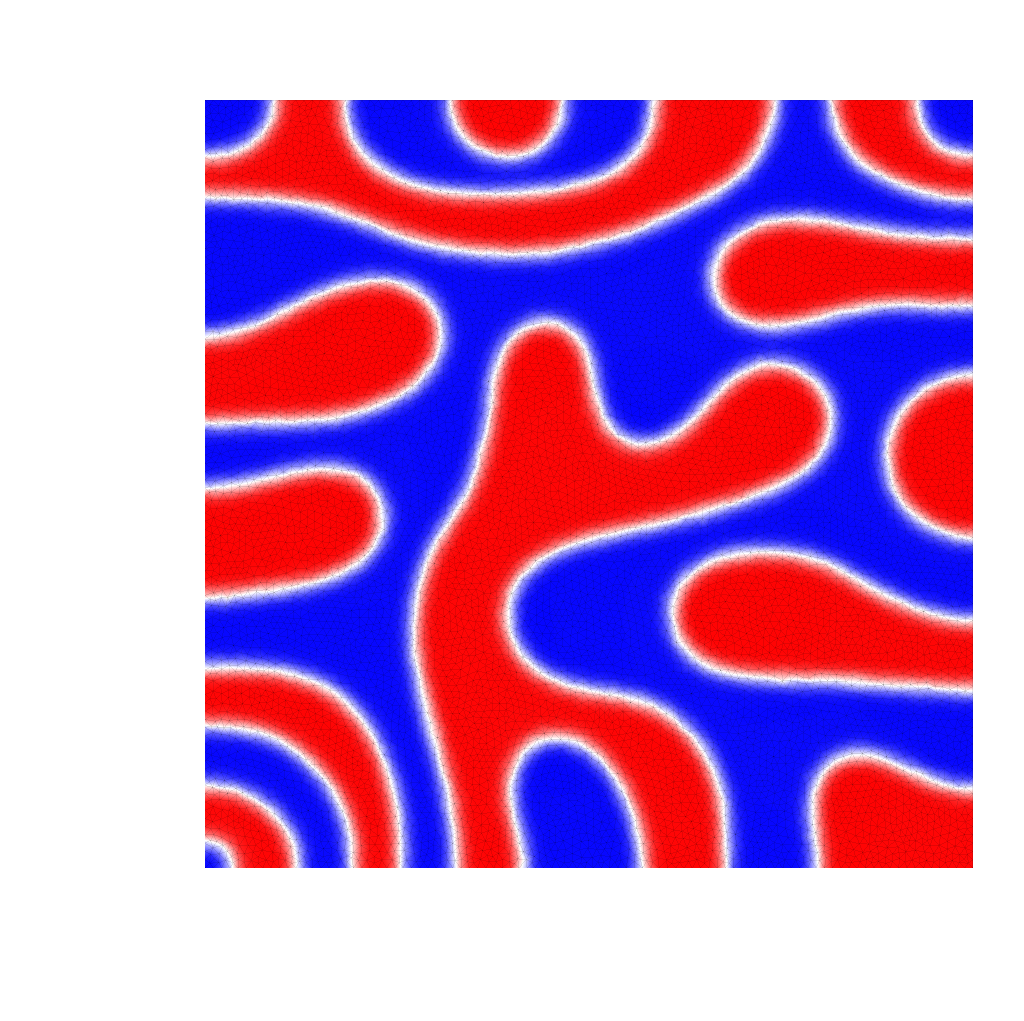}\label{fig:Decomp_spin_t0_2}}
 \caption{Spinodal decomposition without external potential}
 \label{fig:test_decomp_spin}
\end{figure}
\begin{figure}[htbp!]
\centering
\subfloat[$t=0.005$]{
 \includegraphics[trim=5cm 3cm 1cm 3cm, clip,scale=0.12]
  {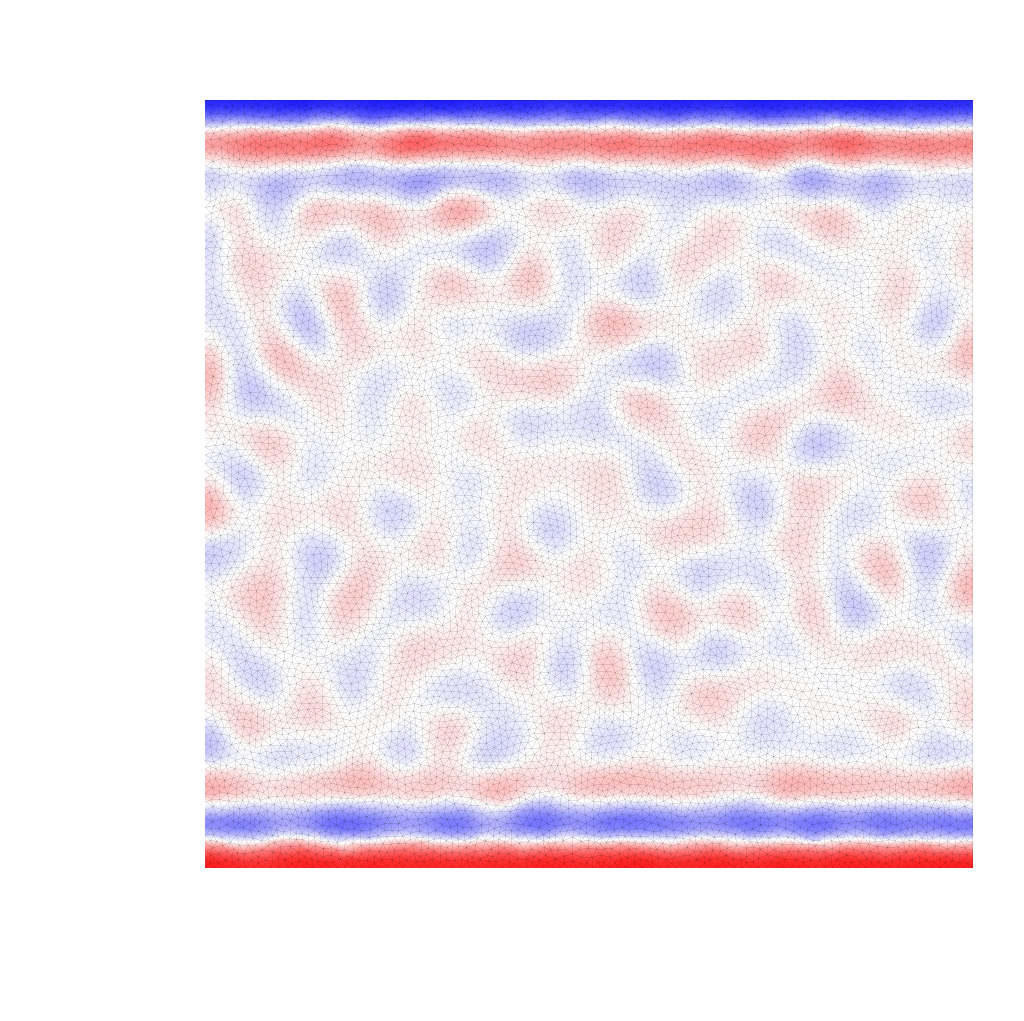}\label{fig:Decomp_spin_psi_t0_005}}\hfil
    \subfloat[$t=0.01$]{
  \includegraphics[trim=5cm 3cm 1cm 3cm, clip,scale=0.12]
   {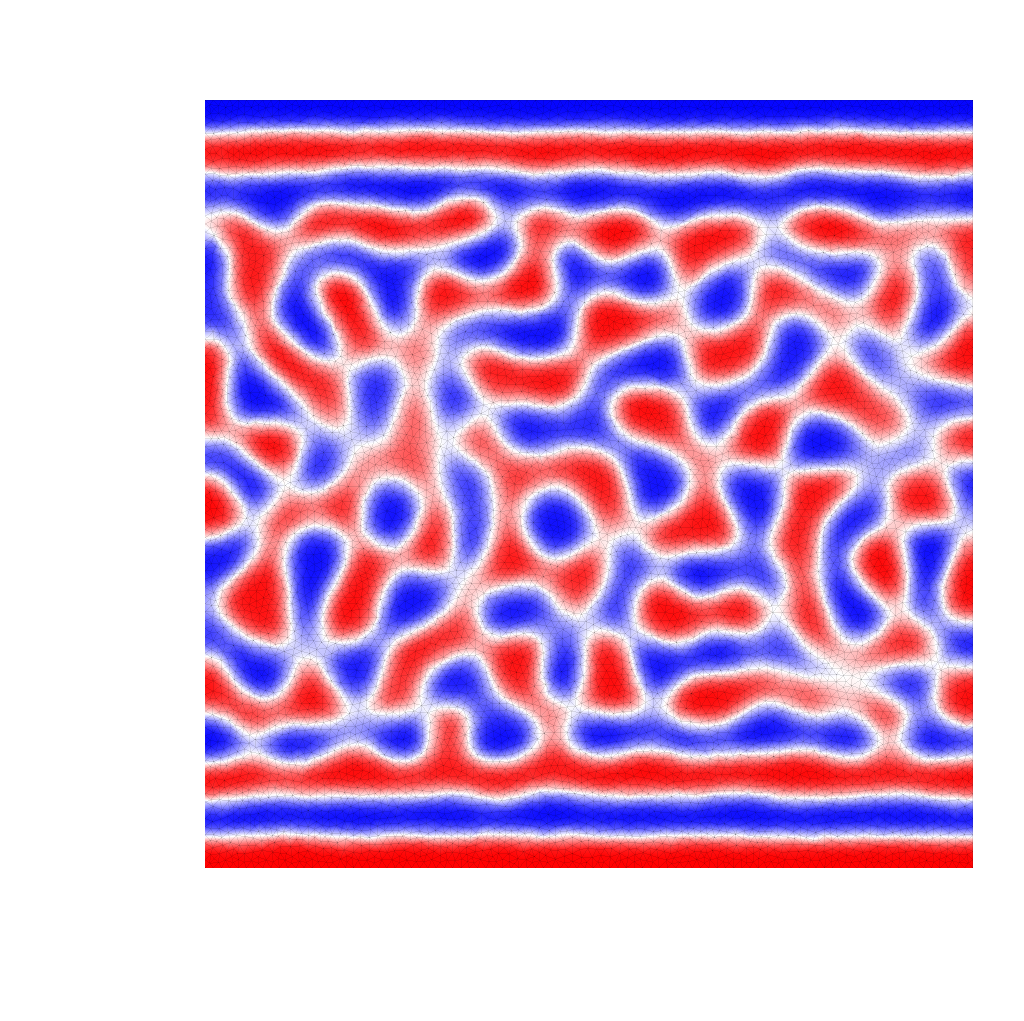}\label{fig:Decomp_spin_psi_t0_01}}\hfil
  \subfloat[$t=0.02$]{
 \includegraphics[trim=5cm 3cm 1cm 3cm, clip,scale=0.12]
  {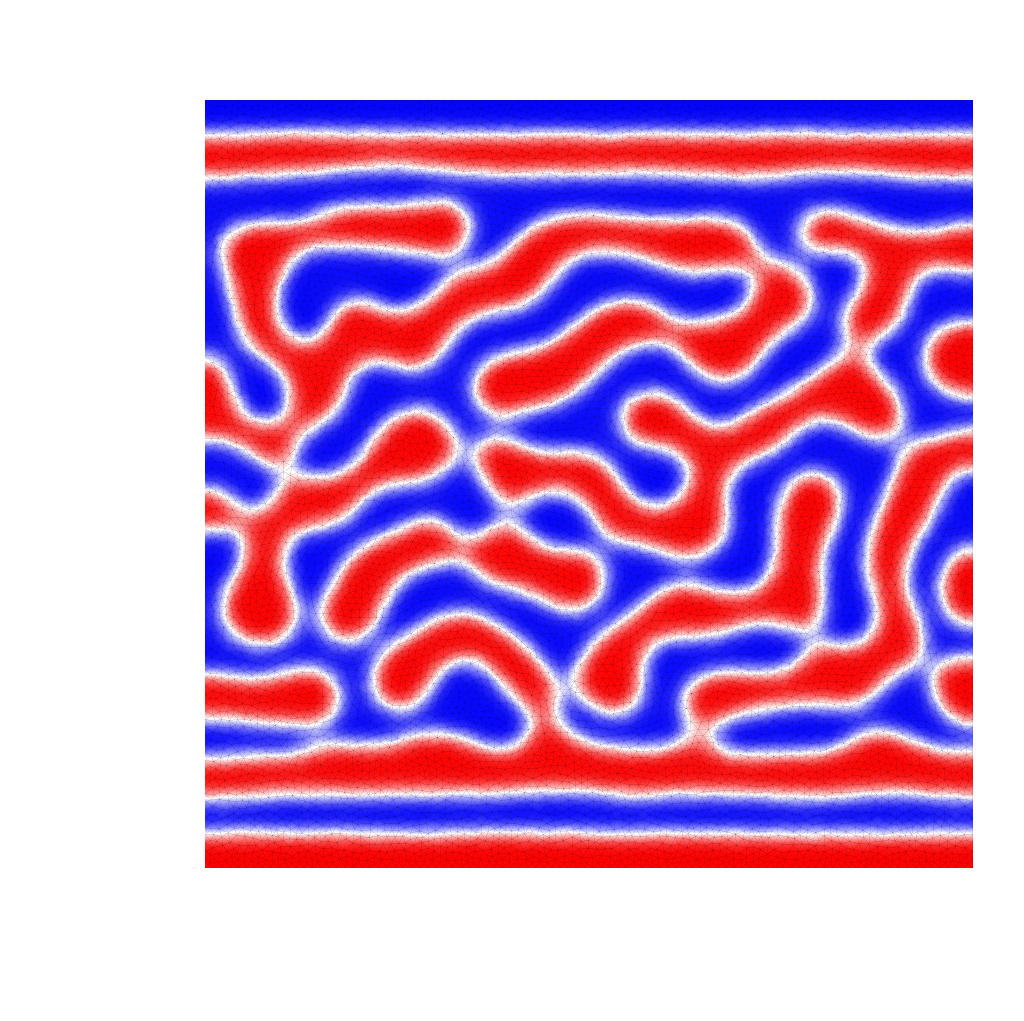}\label{fig:Decomp_spin_psi_t0_02}}
  \subfloat[$t=0.2$]{
 \includegraphics[trim=5cm 3cm 1cm 3cm, clip,scale=0.12]
  {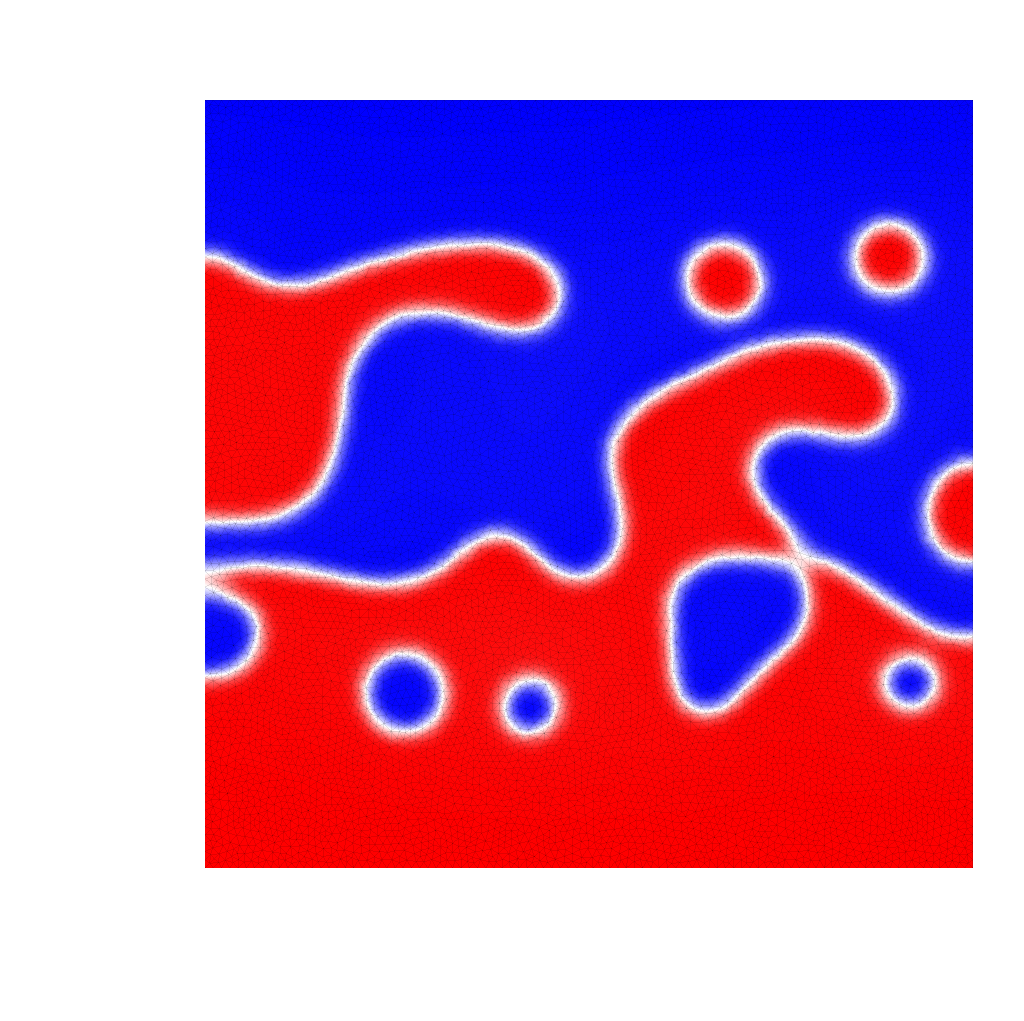}\label{fig:Decomp_spin_psi_t0_2}}
 \caption{Spinodal decomposition with external potentials}
 \label{fig:test_decomp_spin_psi}
\end{figure}

At the very beginning (see Fig.~\ref{fig:Decomp_spin_t0_005} and~\ref{fig:Decomp_spin_psi_t0_005}), as the state $c_0^1=0.5$ is slightly disturbed, the two pure phases $c_0^1=0$ and $c_0^1=1$ quickly spontaneously separate. 
However in the second case, as the phase $c_0^1$ is heavier, we can clearly observe in Fig.~\ref{fig:Decomp_spin_psi_t0_005} the influence of the external potentials at the bottom and the top.
Then the pure phases gradually come together to form larger patterns (see Fig.~\ref{fig:Decomp_spin_t0_01}-~\ref{fig:Decomp_spin_t0_2} and Fig.~\ref{fig:Decomp_spin_psi_t0_01}-~\ref{fig:Decomp_spin_psi_t0_2}).
Furthermore, it can be seen that even if the external potentials have a very strong influence on the phase separation dynamics at the top and the bottom, in a short time, the phase separation dynamic is very similar at the center of the domain (see Fig.~\ref{fig:Decomp_spin_t0_005}-~\ref{fig:Decomp_spin_t0_02} and Fig.~\ref{fig:Decomp_spin_psi_t0_005}-~\ref{fig:Decomp_spin_psi_t0_02}).
But, in a longer time, the influence of external potentials on the entire phase separation dynamics can be observed in Fig.~\ref{fig:Decomp_spin_t0_2} and~\ref{fig:Decomp_spin_psi_t0_2}.

We consider now a second test case. The initial concentration is a cross in the middle of the domain presented in Fig.~\ref{fig:croix_init}.
\begin{figure}[htbp!]
\centering
 \subfloat[$t=0$]{
 \includegraphics[trim=5cm 3cm 1cm 2cm, clip,scale=0.12]
  {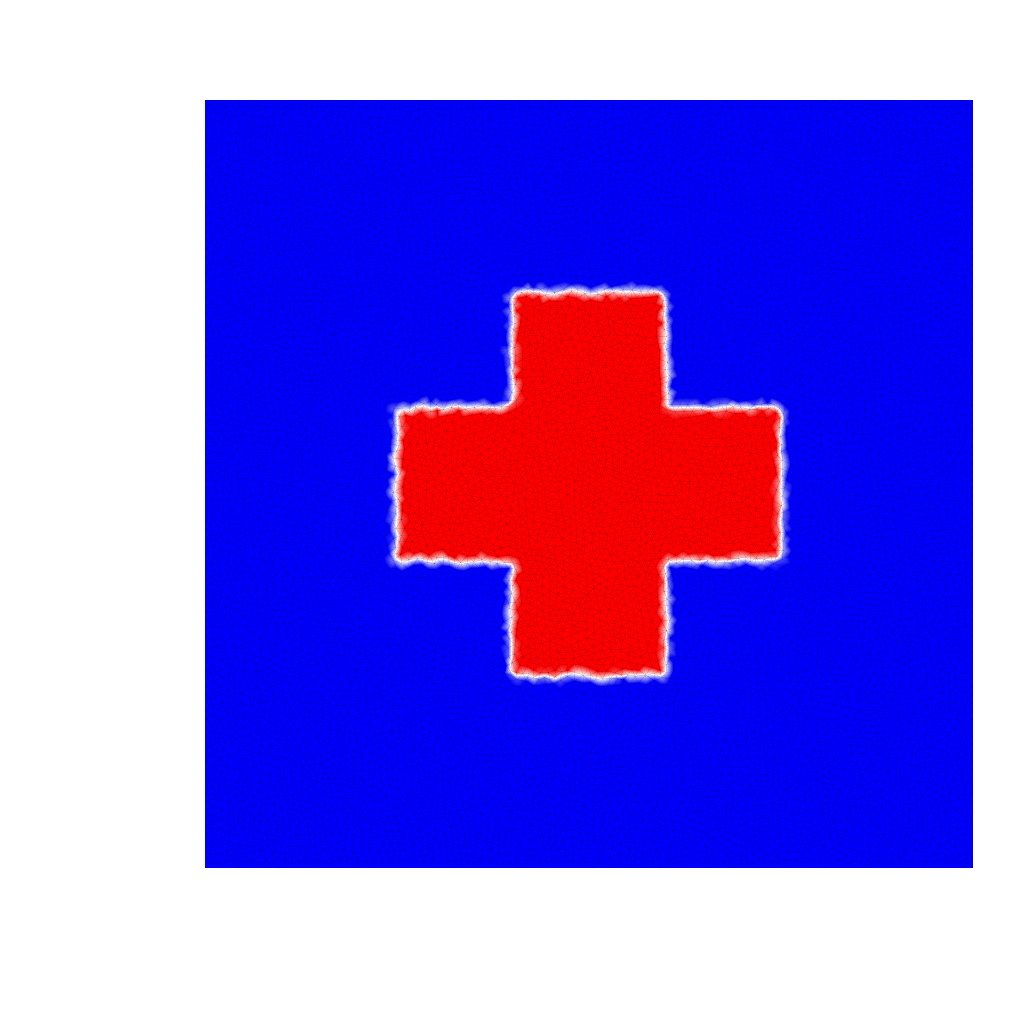}\label{fig:croix_init}}\hfil
\subfloat[$t=0.02$]{
 \includegraphics[trim=5cm 3cm 1cm 2cm, clip,scale=0.12]
  {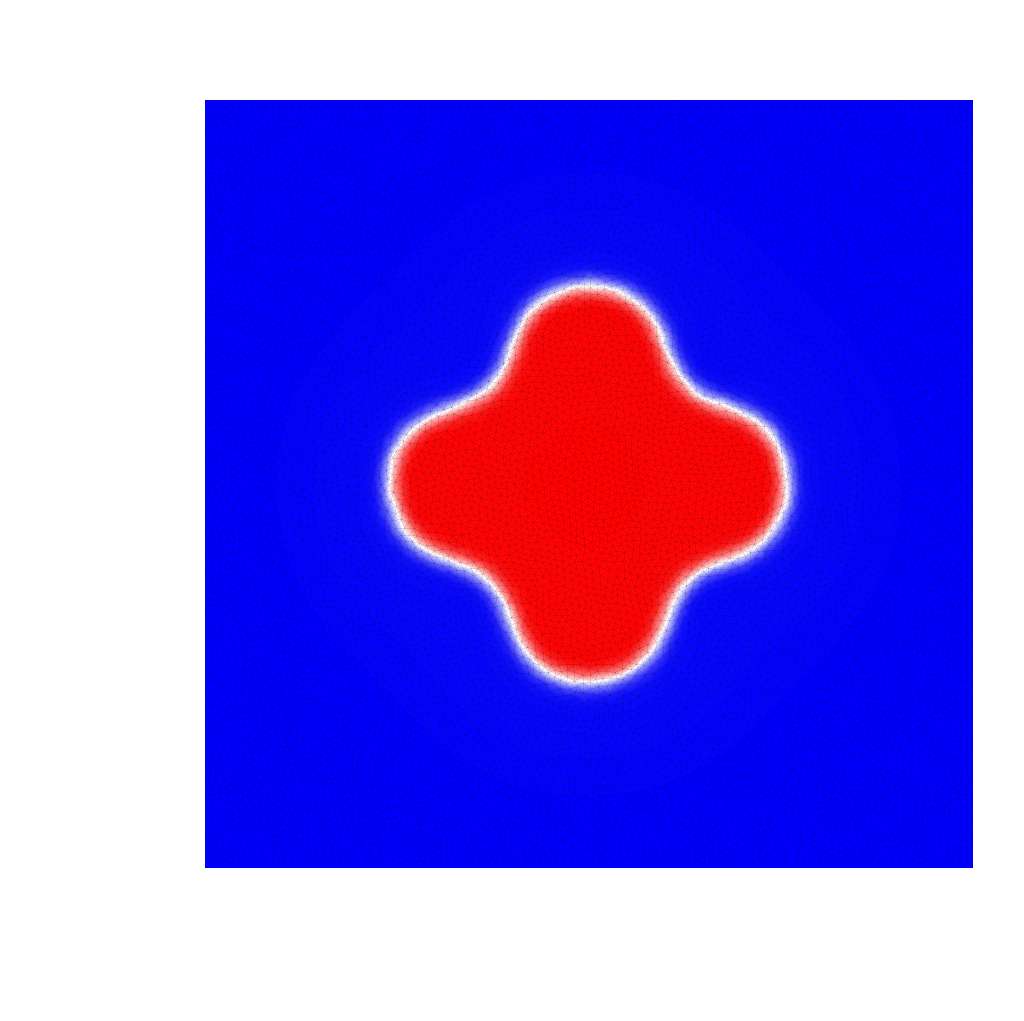}\label{fig:croix_t0_02}}\hfil
    \subfloat[$t=0.07$]{
  \includegraphics[trim=5cm 3cm 1cm 2cm, clip,scale=0.12]
   {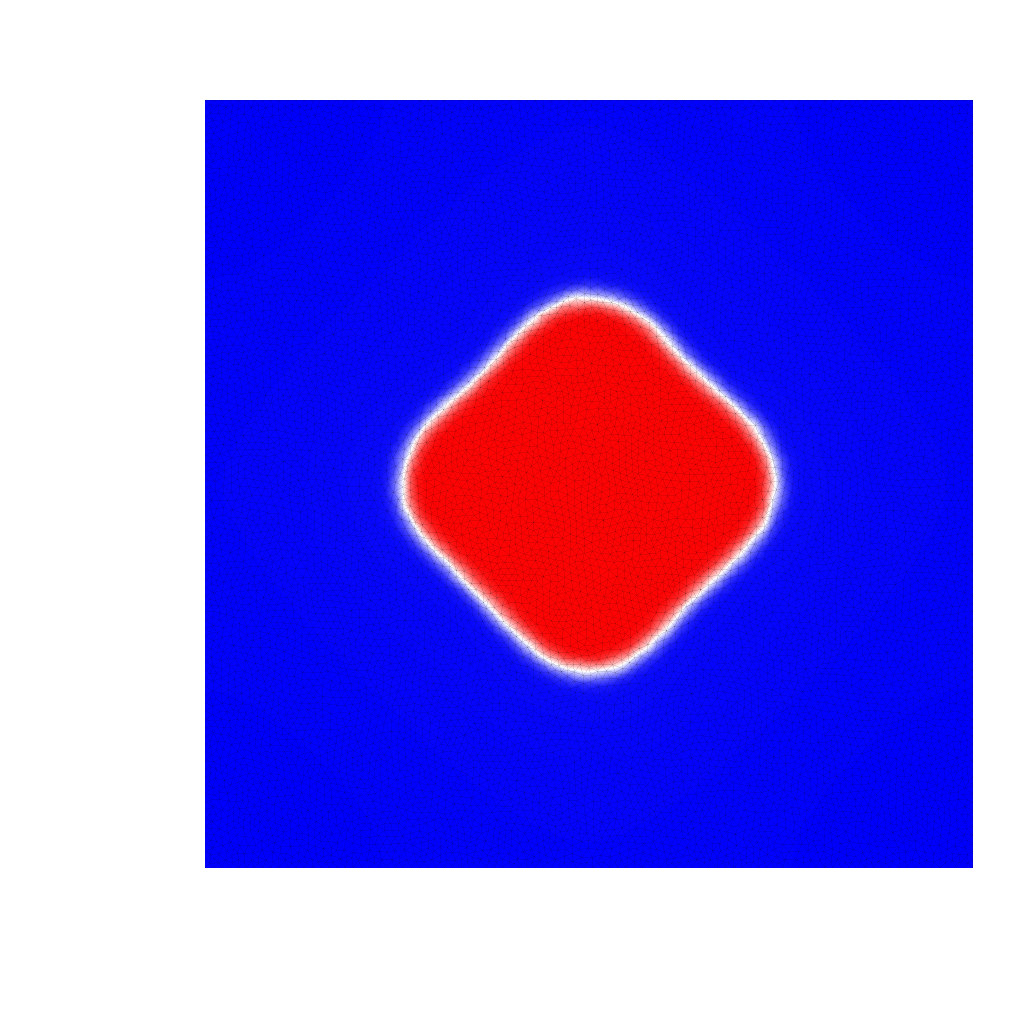}\label{fig:croix_t0_07}}\hfil
  \subfloat[$t=0.2$]{
 \includegraphics[trim=5cm 3cm 1cm 2cm, clip,scale=0.12]
  {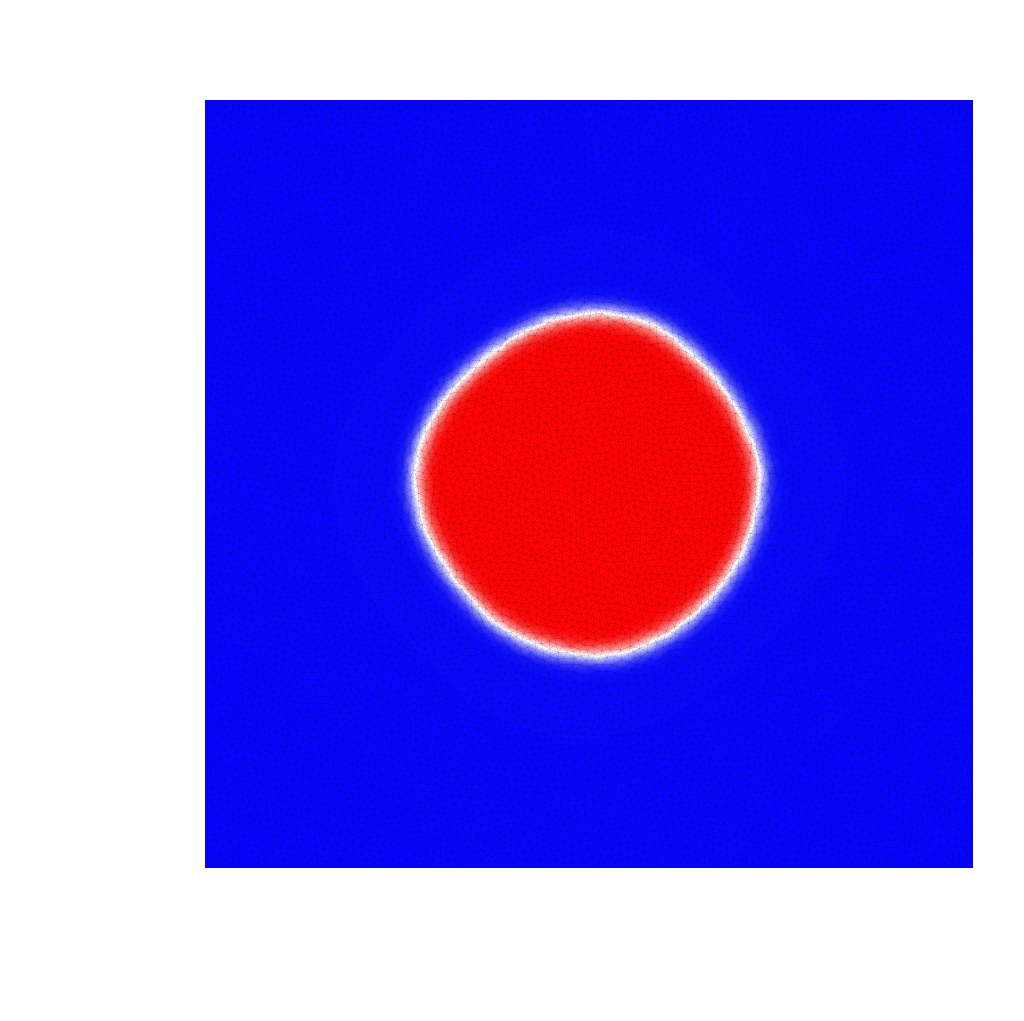}\label{fig:croix_t0_2}}
 \caption{Cross as initial data without external potential}
 \label{fig:test_croix}
\end{figure}
Here again we start with the case without external potentials. We know that in this case the Cahn-Hilliard model preserves the volume while minimizing the perimeter and thus as seen in Fig.~\ref{fig:test_croix} the cross evolves into a circle.
\begin{figure}[htbp!]
\centering
\subfloat[$t=0.02$]{
 \includegraphics[trim=5cm 3cm 1cm 2cm, clip,scale=0.12]
  {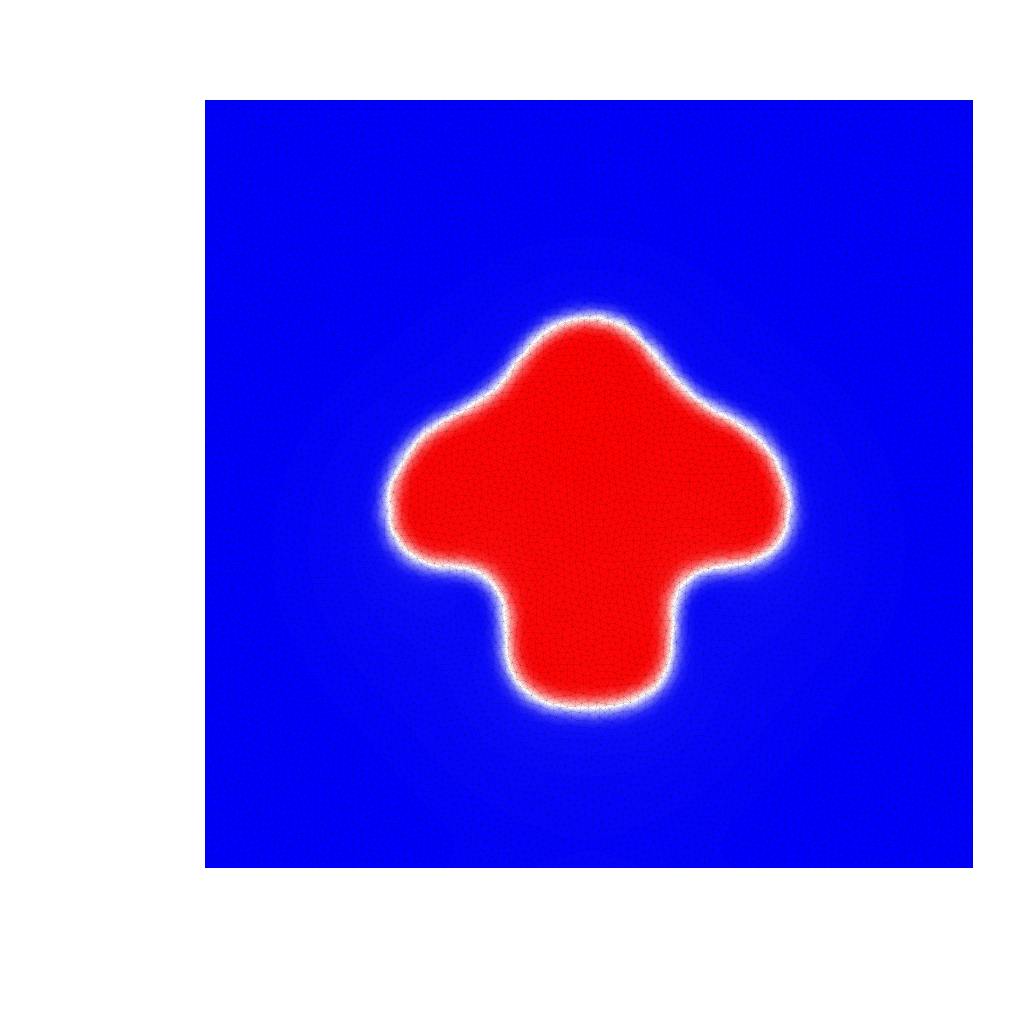}\label{fig:croix_grav_t0_02}}\hfil
    \subfloat[$t=0.07$]{
  \includegraphics[trim=5cm 3cm 1cm 2cm, clip,scale=0.12]
   {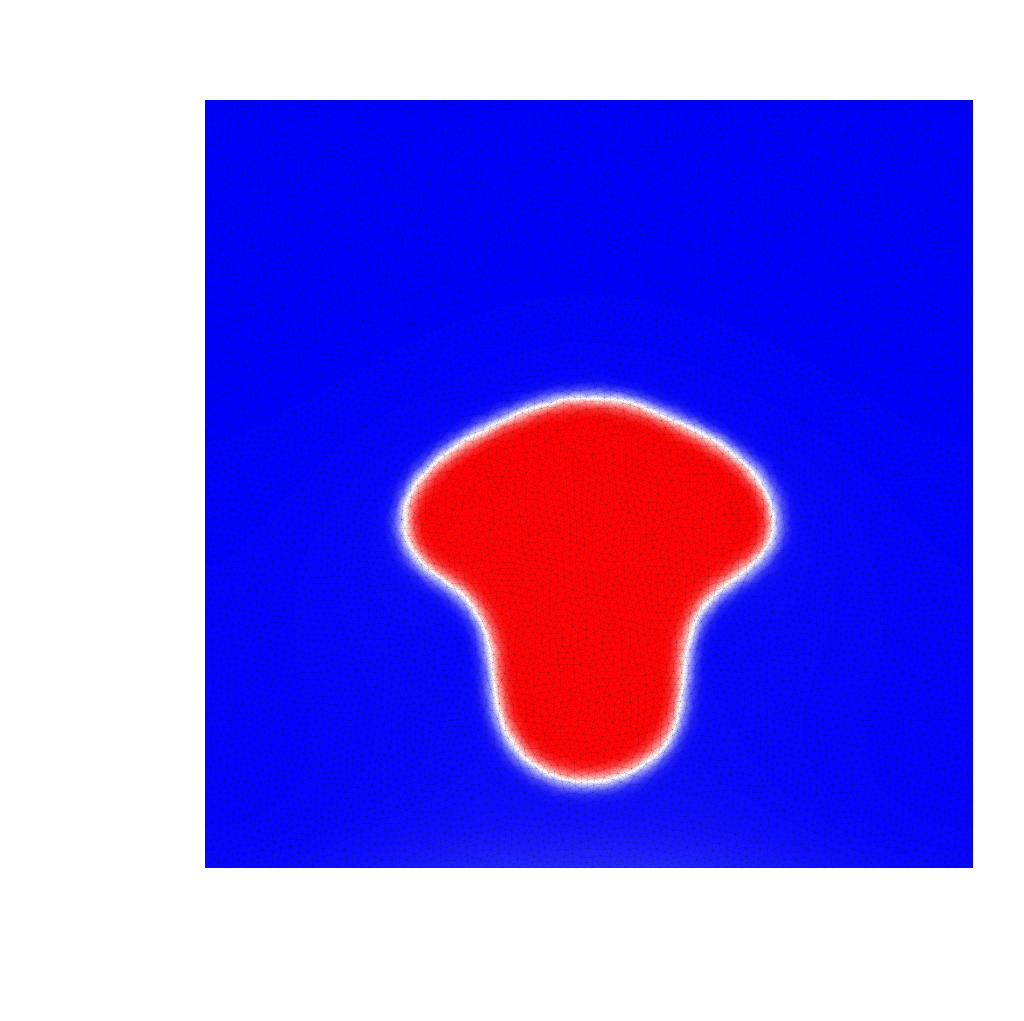}\label{fig:croix_grav_t0_07}}\hfil
  \subfloat[$t=0.15$]{
 \includegraphics[trim=5cm 3cm 1cm 2cm, clip,scale=0.12]
  {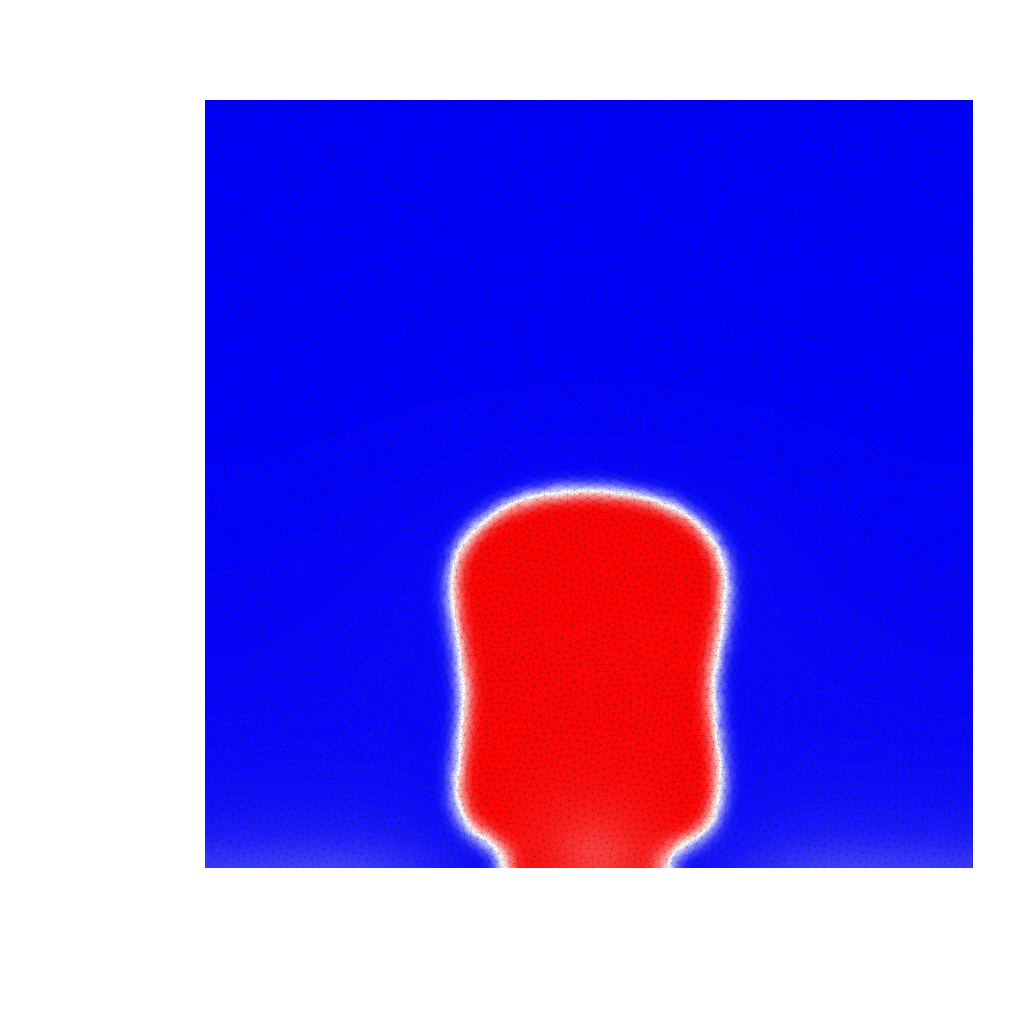}\label{fig:croix_grav_t0_15}}
  \subfloat[$t=0.5$]{
 \includegraphics[trim=5cm 3cm 1cm 2cm, clip,scale=0.12]
  {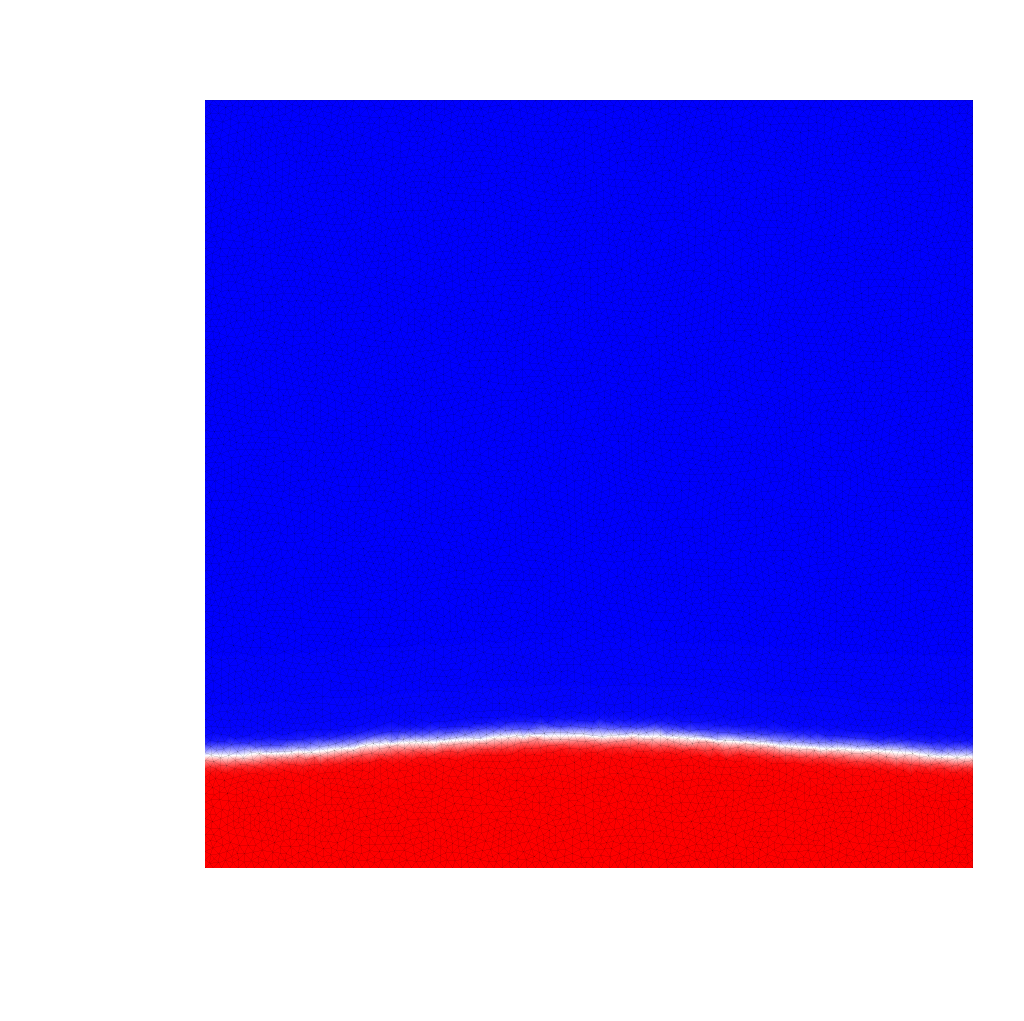}\label{fig:croix_grav_t0_5}}
 \caption{Cross as initial data with external potentials}
 \label{fig:test_croix_grav}
\end{figure}
Now, we want to observe the influence of the gravity when we add the external potentials $\Psi_i(\x)=-\rho_i \bg\cdot \x$.
Since $\rho_1=5>1=\rho_2$, as one might expect, we observe in Fig.~\ref{fig:test_croix_grav} that the cross, that is the pure phase $c_1=1$, is drawn down. Thus, although the volume is still preserved, the final state is no longer a circle but a strip at the bottom of the domain.

\appendix

\section{Technical Lemma}

For $\lambda \in [0,1]$, let $f_\lambda$ and $p_\lambda$ be defined as in~\eqref{eq:fp_lambda}, and let $H_\lambda: \R \to [0,+\infty]$ 
be the convex function defined by 
\be\label{eq:H.lambda}
H_\lambda(c) = \int_1^c p_\lambda(a) \d a = 
\begin{cases}
c \log \frac{1-\lambda}{1+\lambda} + \lambda & \text{if}\; c \leq  \frac{1-\lambda}2, \\
c \log \frac{2c}{1+\lambda} - c + \frac{1+\lambda}2 & \text{if}\; c \in \left[ \frac{1-\lambda}2, \frac{1+\lambda}2\right], \\
0 & \text{if}\; c \geq \frac{1+\lambda}2
\end{cases}
\ee
if $\lambda <1$ and 
\be\label{eq:H.1}
H_1(c) = \begin{cases}
+\infty & \text{if}\; c < 0, \\
c \log c - c + 1 & \text{if}\; c \in [0,1], \\
0 & \text{if}\; c \geq 1. 
\end{cases}
\ee
One readily checks that 
\[
\lim_{\lambda \nearrow 1} H_\lambda(c) = H_1(c), \qquad \forall c \in \R. 
\]
Let us establish the following lemma, which is used in the proof of Proposition~\ref{prop:existence}.
\begin{lem}\label{lem:g_lambda}
For all $\beta >0$, there exists $C_\beta$ depending only on $\beta$ such that 
\be\label{eq:app.0}
\frac{1-\lambda}2 \left(c-\frac12\right)^2 + H_\lambda(c) + H_\lambda(1-c) \geq \beta \left|c-\frac12\right| - C_\beta, \qquad 
\forall \lambda \in [0,1], \; \forall c \in \R.
\ee
\end{lem}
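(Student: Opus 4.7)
The plan is to exploit two sources of coercivity on the LHS: the quadratic term $\tfrac{1-\lambda}{2}(c-\tfrac12)^2$, which dominates when $\lambda$ is bounded away from $1$, and the entropy terms $H_\lambda(c)+H_\lambda(1-c)$, which encode an essentially linear barrier whose slope blows up as $\lambda\nearrow 1$. These two regimes overlap, and together they control any prescribed linear slope $\beta$.

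First I would reduce the statement by symmetry. Both $H_\lambda(c)+H_\lambda(1-c)$ and $(c-\tfrac12)^2$ are invariant under $c\mapsto 1-c$, so it is enough to treat $c\geq \tfrac12$. Writing $t=c-\tfrac12\geq 0$, the inequality to be proved becomes
\[
\tfrac{1-\lambda}{2}t^2 + H_\lambda(\tfrac12+t) + H_\lambda(\tfrac12-t) \geq \beta t - C_\beta.
\]
Since $H_\lambda\geq 0$, the LHS is nonnegative; thus the regime $t\in[0,\tfrac12]$ is handled for free by taking $C_\beta\geq \beta/2$, and I only need to deal with $t>\tfrac12$.

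For $t>\tfrac12$, the case $\lambda=1$ is immediate: $\tfrac12-t<0$ and \eqref{eq:H.1} gives $H_1(\tfrac12-t)=+\infty$, so the LHS is infinite. For $\lambda\in[0,1)$, the inequality $t>\tfrac12\geq \lambda/2$ places $\tfrac12+t$ in the constant region $[\tfrac{1+\lambda}{2},+\infty)$ and $\tfrac12-t$ in the left affine region $(-\infty,\tfrac{1-\lambda}{2}]$ of \eqref{eq:H.lambda}. Reading off the explicit formulas gives $H_\lambda(\tfrac12+t)=0$ and
\[
H_\lambda(\tfrac12-t)=(t-\tfrac12)\log\tfrac{1+\lambda}{1-\lambda}+\lambda \geq (t-\tfrac12)\log\tfrac{1+\lambda}{1-\lambda},
\]
so that the LHS is bounded below by $\tfrac{1-\lambda}{2}t^2 + (t-\tfrac12)\log\tfrac{1+\lambda}{1-\lambda}$.

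The key step, and the only non-trivial one, is then a dichotomy on $\lambda$ using the threshold $\lambda_\beta:=\tanh(\beta/2)\in(0,1)$, chosen so that $\log\tfrac{1+\lambda_\beta}{1-\lambda_\beta}=\beta$. If $\lambda\geq\lambda_\beta$, the entropy term alone yields a slope $\geq \beta$, giving $\text{LHS}\geq \beta(t-\tfrac12)=\beta t-\beta/2$. If $\lambda<\lambda_\beta$, then $1-\lambda>1-\lambda_\beta>0$, the quadratic term is at least $\tfrac{1-\lambda_\beta}{2}t^2$, and Young's inequality gives $\tfrac{1-\lambda_\beta}{2}t^2\geq \beta t-\tfrac{\beta^2}{2(1-\lambda_\beta)}$. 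Taking
\[
C_\beta=\max\!\left(\tfrac{\beta}{2},\,\tfrac{\beta^2}{2(1-\lambda_\beta)}\right)
\]
gives the conclusion uniformly in $\lambda\in[0,1]$ and $c\in\R$. The main subtlety I anticipate is simply the bookkeeping of the three pieces of \eqref{eq:H.lambda} when identifying which regime $\tfrac12\pm t$ falls into; once this is done cleanly, the dichotomy above yields the estimate with no further work.
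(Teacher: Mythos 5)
Your proof is correct, and it takes a genuinely different route from the paper. The paper proves the lemma by defining $\Upsilon(u)=\inf_{\lambda\in[0,1)}\bigl\{\tfrac{1-\lambda}{2}u^2+H_\lambda(u+\tfrac12)+H_\lambda(\tfrac12-u)\bigr\}$, explicitly computing the minimizing $\lambda^\star(u)$ (which equals $0$ for $u\le 4$ and $\sqrt{1-\frac{4u-2}{u^2-2}}$ for $u\ge 4$), and showing that $\Upsilon(u)\sim u\log u$ is super-linear at infinity; the linear lower bound \eqref{eq:app.0} for every $\beta$ then follows as in \cite[Lemma 3.3]{CG_VAGNL}. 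You instead fix $\beta$ first and split on $\lambda$ at the threshold $\lambda_\beta=\tanh(\beta/2)$, using the entropy's affine tail $H_\lambda(\tfrac12-t)\ge (t-\tfrac12)\log\tfrac{1+\lambda}{1-\lambda}$ when $\lambda\ge\lambda_\beta$ and the quadratic term plus Young's inequality when $\lambda<\lambda_\beta$. Your identification of which branch of \eqref{eq:H.lambda} each of $\tfrac12\pm t$ falls into for $t>\tfrac12$ is correct (both conditions reduce to $t\ge\lambda/2$, which holds), the nonnegativity of $H_\lambda$ disposes of $t\le\tfrac12$, and the $\lambda=1$ case is handled by \eqref{eq:H.1}. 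What your argument buys is an elementary proof with an explicit constant $C_\beta=\max\bigl(\tfrac{\beta}{2},\tfrac{\beta^2}{2(1-\lambda_\beta)}\bigr)$ and no optimization over $\lambda$; what it gives up is the sharper quantitative information that the infimum grows like $u\log u$, which the paper establishes but does not actually need for the statement of the lemma.
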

\begin{proof}
Assume that there exists a nonnegative super-linear function  $\Upsilon: \R_+ \to \R_+$ such that 
\be\label{eq:app.1}
\frac{1-\lambda}2 \left(c-\frac12\right)^2 + H_\lambda(c) + H_\lambda(1-c) \geq \Upsilon\left(\left|c-\frac12\right|\right), 
\qquad \forall \lambda \in [0,1], \; \forall c \in \R, 
\ee
and 
\be\label{eq:app.2}
\lim_{u \to + \infty} \frac{\Upsilon(u)}{u} = +\infty.
\ee
Then, we proceed as in the proof of \cite[Lemma 3.3]{CG_VAGNL} to establish~\eqref{eq:app.0}. More precisely, 
we infer from~\eqref{eq:app.2} that for all $\beta >0$, there exists $r_\beta>0$ such that 
\[
u \geq r_\beta \quad \implies \quad \Upsilon(u) \geq \beta u .
\]
Since $\Upsilon(u)$ is assumed to be nonnegative, one has 
\[
\Upsilon(u) \geq \beta u - \beta r_\beta, \qquad \forall u \geq 0,
\]
so that~\eqref{eq:app.1} implies \eqref{eq:app.0}. Therefore, the problem reduces to show that such an $\Upsilon$ exists. 

As a preliminary, we remark that the left-hand side of~\eqref{eq:app.1} is invariant 
by replacing $c$ by $(1-c)$, so that if we establish \eqref{eq:app.1} for $c\geq \frac12$, 
it will also hold true for $c \leq \frac12$. 
Define 
\[
\Upsilon(u) = \inf_{\lambda\in[0,1)} \left\{
\frac{1-\lambda}2 |u|^2 + H_\lambda\left(u+\frac12\right)+ H_\lambda\left(\frac12 - u\right)
\right\} \geq 0, \qquad \forall u \geq 0, 
\]
then~\eqref{eq:app.1} automatically holds. It only remains to check that so does~\eqref{eq:app.2}.
The above definition of $\Upsilon$ can be reformulated as
\[
\Upsilon(u) = \inf_{\lambda\in[0,1)} \Zz_u(\lambda), \qquad \forall u \geq 0, 
\]
where, recalling the expression~\eqref{eq:H.lambda} of $H_\lambda$, $\Zz_u$ is the $C^1$ function defined on $[0,1)$ by 
\[
\Zz_u(\lambda)  = 
\begin{cases}
\frac{1-\lambda}2 u^2 + (\frac12 - u) \log\frac{1-\lambda}{1+\lambda} + \lambda & \text{if}\; \lambda \leq 2 u, \\
\frac{1-\lambda}2 u^2 +  \lambda - \log(1+\lambda) + (\frac12+u) \log(1+2u) + (\frac12-u) \log(1-2u)   & \text{if}\; \lambda \geq 2 u, 
\end{cases}
\]
for all $u \geq 0$ and $\lambda \in [0,1)$. 
An elementary study of this function shows that $\Zz_u$ reaches its minimum on $[0,1]$ at 
\[
\lambda^\star(u) = \begin{cases}
0 & \text{if}\; u \leq 4, \\
\sqrt{1 -  \frac{4u - 2}{u^2 - 2}} & \text{if}\; u \geq 4. 
\end{cases}
\]
Using this expression in the above expression of $\Zz_u(\lambda)$, we can explicit $\Upsilon(u)$ as 
\[
\Upsilon(u) = \begin{cases}
\frac{u^2}2 & \text{if}\; u \leq 4, \\
\Upsilon_1(u) + \Upsilon_2(u) + \lambda^\star(u) & \text{if}\; u \geq 4, 
\end{cases}
\]
where we have set 
\[
\Upsilon_1(u) = \frac{u^2}2 \left( 1 - \lambda^\star(u) \right), 
\quad \text{and} \quad
\Upsilon_2(u) = (u-\frac12) \log \frac{1 + \lambda^\star(u)}{1 - \lambda^\star(u)}.
\]
Noticing that $\lambda^\star(u) \sim 1 - \frac2u$ as $u$ tends to $+\infty$, one obtains that 
$\Upsilon_1(u) \sim u$ behaves linearly at infinity. However, $\Upsilon(u)$ is super-linear, i.e. \eqref{eq:app.2} holds 
since $\Upsilon_2(u)\sim u \log u$ as $u \to +\infty$.
\end{proof}

\subsection*{Acknowledgements}  
The authors acknowledge the support of 
the French National Research Agency (ANR) through grant ANR-13-JS01-0007-01 (project GEOPOR). 
C. Canc\`es also acknowledges support from Labex CEMPI (ANR-11-LABX-0007-01).



\begin{thebibliography}{10}

\bibitem{AGS08}
L.~Ambrosio, N.~Gigli, and G.~Savar{\'e}.
\newblock {\em Gradient flows in metric spaces and in the space of probability
  measures}.
\newblock Lectures in Mathematics ETH Z\"urich. Birkh\"auser Verlag, Basel,
  second edition, 2008.

\bibitem{ACM17}
B.~Andreianov, C.~Canc\`es, and A.~Moussa.
\newblock A nonlinear time compactness result and applications to
  discretization of degenerate parabolic-elliptic {PDE}s.
\newblock {\em J. Funct. Anal.}, 273(12):3633--3670, 2017.

\bibitem{BBG04}
J.-D. Benamou, Y.~Brenier, and K.~Guittet.
\newblock Numerical analysis of a multi-phasic mass transport problem.
\newblock In {\em Recent advances in the theory and applications of mass
  transport}, volume 353 of {\em Contemp. Math.}, pages 1--17. Amer. Math.
  Soc., Providence, RI, 2004.

\bibitem{BCCHF15}
M.~Bessemoulin-Chatard, C.~Chainais-Hillairet, and F.~Filbet.
\newblock On discrete functional inequalities for some finite volume schemes.
\newblock {\em IMA J. Numer. Anal.}, 35:1125--1149, 2015.

\bibitem{CG_VAGNL}
C.~Canc\`es and C.~Guichard.
\newblock Numerical analysis of a robust free energy diminishing finite volume
  scheme for parabolic equations with gradient structure.
\newblock {\em Found. Comput. Math.}, 17(6):1525--1584, 2017.

\bibitem{CM_HAL}
C.~Canc\`es and D.~Matthes.
\newblock Construction of a two-phase flow with singular energy by gradient
  flow methods.
\newblock HAL: hal-02510535, submitted for publication, 2020.

\bibitem{CMN19}
C.~Canc\`es, D.~Matthes, and F.~Nabet.
\newblock {A two-phase two-fluxes degenerate {C}ahn-{H}illiard model as
  constrained {W}asserstein gradient flow}.
\newblock {\em Arch. Ration. Mech. Anal.}, pages 1--30, 2019.

\bibitem{CN_FVCA8}
C.~Canc{\`e}s and F.~Nabet.
\newblock Finite volume approximation of a degenerate immiscible two-phase flow
  model of {C}ahn-{H}illiard type.
\newblock In C.~Canc{\`e}s and P.~Omnes, editors, {\em Finite Volumes for
  Complex Applications VIII - Methods and Theoretical Aspects : FVCA 8, Lille,
  France, June 2017}, number 199 in Proceedings in Mathematics and Statistics,
  pages 431--438, Cham, 2017. Springer International Publishing.

\bibitem{CHLP03}
C.~Chainais-Hillairet, J.-G. Liu, and Y.-J. Peng.
\newblock Finite volume scheme for multi-dimensional drift-diffusion equations
  and convergence analysis.
\newblock {\em ESAIM: M2AN}, 37(2):319--338, 2003.

\bibitem{Dei85}
K.~Deimling.
\newblock {\em Nonlinear functional analysis}.
\newblock Springer-Verlag, Berlin, 1985.

\bibitem{DNS09}
J.~Dolbeault, B.~Nazaret, and G.~Savar{\'e}.
\newblock A new class of transport distances between measures.
\newblock {\em Calc. Var. Partial Differential Equations}, 34(2):193--231,
  2009.

\bibitem{kangourou_2018}
J.~Droniou, R.~Eymard, T.~Gallou\"et, C.~Guichard, and R.~Herbin.
\newblock {\em The Gradient Discretisation Method}, volume~42 of {\em
  Math\'ematiques et Applications}.
\newblock Springer International Publishing, 2018.

\bibitem{EPM97}
W.~E and P.~Palffy-Muhoray.
\newblock Phase separation in incompressible systems.
\newblock {\em Phys. Rev. E}, 55:R3844--R3846, Apr 1997.

\bibitem{EG96}
C.~M. Elliott and H.~Garcke.
\newblock On the {C}ahn-{H}illiard equation with degenerate mobility.
\newblock {\em SIAM J. Math. Anal.}, 27(2):404--423, 1996.

\bibitem{EG03}
R.~Eymard and T.~Gallou{\"e}t.
\newblock {$H$}-convergence and numerical schemes for elliptic problems.
\newblock {\em SIAM J. Numer. Anal.}, 41(2):539--562, 2003.

\bibitem{EGH00}
R.~Eymard, T.~Gallou\"et, and R.~Herbin.
\newblock Finite volume methods.
\newblock Ciarlet, P. G. (ed.) et al., in Handbook of numerical analysis.
  North-Holland, Amsterdam, pp. 713--1020, 2000.

\bibitem{EGH10}
R.~Eymard, T.~Gallou\"et, and R.~Herbin.
\newblock Discretization of heterogeneous and anisotropic diffusion problems on
  general nonconforming meshes sushi: a scheme using stabilization and hybrid
  interfaces.
\newblock {\em IMA J. Numer. Anal.}, 30(4):1009--1043, 2010.

\bibitem{GKM_arXiv}
P.~Gladbach, E.~Kopfer, and J.~Maas.
\newblock Scaling limits of discrete optimal transport.
\newblock arXiv:1809.01092, 2018.

\bibitem{GG10}
A.~Glitzky and J.~A. Griepentrog.
\newblock Discrete {S}obolev-{P}oincar\'e inequalities for {V}oronoi finite
  volume approximations.
\newblock {\em SIAM J. Numer. Anal.}, 48:372--391, 2010.

\bibitem{JKO98}
R.~Jordan, D.~Kinderlehrer, and F.~Otto.
\newblock The variational formulation of the {F}okker-{P}lanck equation.
\newblock {\em SIAM J. Math. Anal.}, 29(1):1--17, 1998.

\bibitem{LS34}
J.~Leray and J.~Schauder.
\newblock Topologie et \'equations fonctionnelles.
\newblock {\em Ann. Sci. \'Ecole Norm. Sup.}, 51(3):45--78, 1934.

\bibitem{LMS12}
S.~Lisini, D.~Matthes, and G.~Savar{\'e}.
\newblock {C}ahn-{H}illiard and thin film equations with nonlinear mobility as
  gradient flows in weighted-{W}asserstein metrics.
\newblock {\em J. Differential Equations}, 253(2):814--850, 2012.

\bibitem{Maas11}
J.~Maas.
\newblock Gradient flows of the entropy for finite {M}arkov chains.
\newblock {\em J. Funct. Anal.}, 261(8):2250--2292, 2011.

\bibitem{MM16}
J.~Maas and D.~Matthes.
\newblock Long-time behavior of a finite volume discretization for a fourth
  order diffusion equation.
\newblock {\em Nonlinearity}, 29(7):1992--2023, 2016.

\bibitem{Mie11}
A.~Mielke.
\newblock A gradient structure for reaction-diffusion systems and for
  energy-drift-diffusion systems.
\newblock {\em Nonlinearity}, 24(4):1329--1346, 2011.

\bibitem{OE97}
F.~Otto and W.~E.
\newblock Thermodynamically driven incompressible fluid mixtures.
\newblock {\em J. Chem. Phys.}, 107(23):10177--10184, 1997.

\bibitem{SG69}
D.~L. Scharfetter and H.~K. Gummel.
\newblock Large-signal analysis of a silicon read diode oscillator.
\newblock {\em Electron Devices, IEEE Transactions on}, 16(1):64--77, 1969.

\bibitem{Villani09}
C.~Villani.
\newblock {\em Optimal transport}, volume 338 of {\em Grundlehren der
  Mathematischen Wissenschaften [Fundamental Principles of Mathematical
  Sciences]}.
\newblock Springer-Verlag, Berlin, 2009.
\newblock Old and new.

\end{thebibliography}
\end{document}